\documentclass[10pt,reqno]{amsart}
\usepackage[all,cmtip]{xy}
\usepackage{indentfirst}

\usepackage{amssymb}
\usepackage{mathabx}
\usepackage{physics}
\input{macros}
\usepackage[margin = 1.2in]{geometry}

\makeatletter 
\@mparswitchfalse%
\makeatother
\normalmarginpar 

\title{Nonlinear Modal Reduction for Subwavelength Dielectric Scattering}

\author{Habib Ammari} %
\address[H. Ammari]{Department of Mathematics, ETH Z\"{u}rich, R\"{a}mistrasse 101, CH-8092 Z\"{u}rich, Switzerland}
\email{habib.ammari@math.ethz.ch}

\author{Ioan Gorea} %
\address[I. Gorea]{Department of Mathematics, ETH Z\"{u}rich, R\"{a}mistrasse 101, CH-8092 Z\"{u}rich, Switzerland}
\email{igorea@student.ethz.ch}

\author{Bowen Li} %
\address[B. Li]{Department of Mathematics, City University of Hong Kong, Kowloon Tong, Hong Kong SAR}
\email{bowen.li@cityu.edu.hk}

\begin{document}

\begin{abstract}
We study three-dimensional wave scattering by high-index dielectric resonators with Kerr-type nonlinearity under plane-wave incidence, together with the associated nonlinear dielectric scattering resonances, through a nonlinear Lippmann--Schwinger equation. Using a Lyapunov--Schmidt reduction near a simple eigenmode of the Newtonian potential, we obtain a local decomposition of the scattered wave into a resonant contribution and a controlled remainder and derive an explicit nonlinear equation for the resonant modal coefficient for sufficiently small contrast parameter and locally small resonant amplitude and incident field. A second reduction covers the regime of incident fields of order one and weak nonlinearity.
 Our results extend for the first time the linear modal decomposition to nonlinear wave-scattering problems. We complement them by developing a numerical framework based on Nystr\"om discretization, real Newton iteration, and pseudo-arclength continuation. For single resonators of several geometries, our computations confirm the predicted high-contrast resonance scaling and the nonlinear modal approximation, and exhibit the multivalued incident-wave response. For a mirror-symmetric resonator dimer, we track symmetric, antisymmetric, and symmetry-broken resonance branches over a broad range of separations. Our computations show that the symmetry-breaking threshold increases as the gap between the resonators decreases, in agreement with the leading-order bifurcation theory.
\end{abstract}

\maketitle

\tableofcontents

\section{Introduction}
\label{sec:introduction}

Recent developments in nanophotonics have led to increased interest in subwavelength photonic resonators made from all-dielectric materials with high refractive indices \cite{kuznetsov_optically_2016,arbabi_dielectric_2015}. 
Despite advances in the experimental and numerical modeling of nonlinear optical scattering, a deep mathematical understanding of nonlinear subwavelength dielectric resonances remains largely undeveloped. The linear case is better understood, from the classical theory of Mie scattering by spherical nanoparticles \cite{original_mie_spherical_2018} to the Helmholtz problem for arbitrarily shaped nanoparticles treated in \cite{ammari_linear_helmholtz_2019}. A comprehensive mathematical framework has also been developed for the linear dielectric resonance problem for the full Maxwell system, both for single dielectric nanoparticles and for clusters of nanoparticles \cite{ammari_maxwell_2023, sini_maxwell_linear_2022, ammari_fanoresonance_2024}.

Although the mathematical foundations of linear subwavelength physics are now well developed \cite{ammari_mathematical_2026}, nonlinear subwavelength dielectric scattering resonances remain poorly understood, in part because the problem lacks a variational framework. Recent works \cite{meklachi_asymptotic_2018,kosche_nonlinear_2025} address high-contrast nonlinear scatterers and nonlinear acoustic bubbly media, respectively, using asymptotic analysis and assuming a priori the existence of nonlinear resonances with convergent series expansions. Closely related to the present work, the existence of nonlinear subwavelength dielectric resonances bifurcating from the zero solution at the corresponding linear resonances was proved in \cite{ammari_dielectric_2025}. There, the resonance problem associated with the nonlinear Helmholtz equation is characterized by a nonlinear Lippmann--Schwinger equation. The corresponding nonlinear resonant states are analogous to stationary states in nonlinear quantum systems; results for the nonlinear Schr\"odinger equation \cite{kirr_symmetry-breaking_2008,kirr_symmetry-breaking_2011} can therefore be adapted to this scattering problem to establish symmetry-breaking phenomena. We also refer the reader to \cite{clemens2026,yu2026} for closely related recent results.

The aim of this work is to build on these results and further develop a mathematical and numerical framework for nonlinear dielectric resonances in the subwavelength regime, focusing on the nonlinear Helmholtz equation studied in \cite{ammari_dielectric_2025}. Our main contribution is twofold: (i) we determine to what extent linear modal decompositions in terms of subwavelength resonances, such as those derived in \cite{feppon_modal_2022,millien2022}, extend to nonlinear wave-scattering problems; and (ii) we find and track bifurcating branches arising in the nonlinear scattering resonance problem.

To give a quantitative nonlinear modal approximation of the scattered field, we 
start from a nonlinear Lippmann--Schwinger formulation of the scattering problem. Then, we perform a Lyapunov--Schmidt reduction near a simple eigenmode of the Newtonian potential. For a sufficiently small contrast parameter and locally small resonant amplitude and incident field, we prove that the interior field admits a unique decomposition into a resonant mode and a controlled complementary component. This yields an exact scalar equation for the resonant modal amplitude and, at leading order, a cubic input--output relation that predicts multivalued nonlinear response. We also obtain an exterior modal approximation for the scattered field with an explicit remainder estimate. A second reduction covers the regime of incident fields of order one and weak nonlinearity.

To complement our analysis, we develop a Nystr\"om discretization of the volume integral equation together with a real Newton solver. For nonlinear resonance computations, the $S^1$ phase invariance is removed by combining a normalization constraint with a local phase condition. Pseudo-arclength continuation is used to find and track multiple solution branches. The method applies to nonspherical resonators and to multiparticle configurations.

The numerical experiments address three questions. First, for single resonators of several geometries, we verify the high-contrast asymptotics of the nonlinear resonant frequency and study grid refinement. Second, for the incident-wave problem, we compare the full numerical solution with the reduced amplitude equation and the exterior modal approximation; the computations display the predicted resonant line shape and a multivalued nonlinear response. Third, for a mirror-symmetric dimer, we track symmetric, antisymmetric, and asymmetric resonance branches. The observed symmetry-breaking threshold agrees closely with the leading-order bifurcation theory and increases as the gap between the two resonators decreases, indicating that stronger near-field hybridization raises the nonlinear amplitude required for symmetry breaking.

The paper is organized as follows. In Section \ref{sec:theory-reduction}, we introduce the nonlinear scattering problem for high-index dielectric resonators subject to an incident plane wave. Our main results give a local decomposition of the scattered field into a resonant contribution and a controlled remainder, together with an explicit nonlinear equation determining the modal coefficient. Section \ref{sec:numerical} considers a single resonator and a dimer. For the single resonator, we present numerical results for the nonlinear scattering problem introduced in this paper; for the dimer, we study the nonlinear dielectric resonance problem without an incident field, as in \cite{ammari_dielectric_2025}. We explain how to find and track the bifurcating branches predicted by the local theory and present new numerical findings on their dependence on the separation distance between the resonators. Finally, concluding remarks and future directions are given in Section \ref{sec:conclusion}.

Throughout this paper, we use the standard Sobolev spaces on a domain $D \subset \R^3$. Our complex $L^2(D)$ inner product is
$\innerproduct{u}{v}_D:=\int_D\overline{u(x)}v(x)\,\dd x$,
so it is conjugate-linear in the first argument. We also use $\partial$ to denote the Fr\'echet derivative. The symbol $C>0$ denotes a generic positive constant that may vary from line to line; any relevant parameter dependence will be stated. For two normed vector spaces $V$ and $W$, we write $V \hookrightarrow W$ for a continuous embedding and $V \Subset W$ for a compact embedding, as in \cite{ciarlet_fa_2013}.

\section{Nonlinear Dielectric Resonances}\label{sec:theory-reduction}

\subsection{Preliminaries}

Let $D \subset \R^3$ be a bounded open set with boundary of class $C^{1,1}$, and let $\chi_D$ denote its characteristic function. Consider a prescribed incident field $u^{\mathrm{inc}} \in H^2_{\mathrm{loc}}(\R^3)$ satisfying the free-space Helmholtz equation
\begin{equation}\label{eq:freespace-helmholtz-incident-field}
    - (\Delta + \omega^2) u^{\mathrm{inc}} = 0 \quad \mathrm{in} \ \R^3.
\end{equation}
We seek a total field $u \in H^2_{\mathrm{loc}}(\R^3)$, decomposed as $u = u^{\mathrm{inc}} + u^{\mathrm{sc}}$, that satisfies the nonlinear Helmholtz equation
\begin{equation}\label{eq:nonlinear-helmholtz}
    \begin{aligned}
        -\Delta u - \omega^2 u &=  \omega^2 \tau \chi_D q_{\eta}(u)u \quad \mathrm{in} \ \R^3,\\
        u\restrict{+} &= u\restrict{-} \quad \mathrm{on} \ \partial D, \\
        \frac{\partial u}{\partial \nu}\restrict{+} &= \frac{\partial u}{\partial \nu}\restrict{-} \quad \mathrm{on} \ \partial D,\\
    \end{aligned}
\end{equation}
together with the Sommerfeld radiation condition for $\omega > 0$:
\begin{equation}\label{eq:sommerfeld}
    \begin{aligned}
        r \abs{\frac{\partial u^{\mathrm{sc}}}{\partial r} - \iu \omega u^{\mathrm{sc}}} \rightarrow 0, \ \mathrm{as} \ r = \abs{x} \rightarrow \infty,
    \end{aligned}
\end{equation}
where $u^{\mathrm{sc}}$ denotes the scattered field. The function
$q_{\eta}:\C\rightarrow\R$, $q_{\eta}(z):=1+\eta\abs{z}^2$, models the
nonlinear response of the refractive index to the intensity of the wave field,
whereas $\eta > 0$ is a real parameter characterizing the strength of the
nonlinearity.
When $u^{\mathrm{inc}}=0$, recall from \cite{ammari_dielectric_2025} that a nontrivial solution $u \neq 0$ of the eigenvalue problem \eqref{eq:nonlinear-helmholtz} is called a resonant state. This leads to the following definition.

\begin{definition}\label{def:nonlinear-dielec-resonance}
    A complex number $\omega \in \C$ is a nonlinear dielectric scattering resonance for the nonlinear Helmholtz equation without an incident field, i.e., \eqref{eq:nonlinear-helmholtz} with $u^\mathrm{inc} = 0$, if there exists a nontrivial solution $0 \not\equiv u\in H^2(D)$ such that
    \begin{equation}
        u = \tau\omega^2 \K_D^\omega[q_{\eta}(u) u] \quad \text{ in } D.
    \end{equation}
    The nonlinear resonance is called subwavelength if $\omega(\tau) \rightarrow 0$ as $\tau \rightarrow \infty$.
\end{definition}
Recall that, for complex $\omega$, the outgoing radiation condition can be specified using the following free-space Green function of $-(\Delta + \omega^2)$ in $\R^3$:
\begin{equation} \label{Gomega}
    G^\omega(x) := \frac{e^{\iu \omega \abs{x}}}{4 \pi \abs{x}}. 
\end{equation}
In fact, it can be understood by analytic continuation of the free outgoing resolvent, or equivalently, through the kernel $G^\omega$; see, for instance, \cite{zworski}. 

The associated volume potential operator, which is the restriction to $L^2(D)$ of the outgoing resolvent of the free Laplacian $-\Delta$, is defined as follows:
\begin{equation}\label{eq:prelim-K-volume-operator-def}
    \K_D^\omega : L^2(D) \rightarrow H^2(D),
    \qquad
    \K_D^\omega[\phi](x) := \int_D G^\omega(x-y) \phi(y) \dd y,
\end{equation}
which satisfies $-(\Delta +\omega^2)\K_D^\omega[\phi] = \chi_D \phi$ in $\R^3$ and admits the following expansion:
\begin{equation}\label{eq:prelim-K-volume-operator-expansion}
    \K_D^\omega = \K_D + \sum_{n=1}^{\infty}\omega^n \K_{D,n},
    \qquad
    \K_{D,n}[\phi]:= \int_D G_n(x-y)\phi(y)\dd y,
\end{equation}
where
\begin{equation}
    G_n(x):=\frac{\iu^n\abs{x}^{n-1}}{4\pi n!},
\end{equation}
$\K_D$ is the Newtonian potential, and the series converges in the operator norm. In particular, for each bounded set $B \Subset \C$, there exists a constant $C_B >0$ such that
\begin{equation}\label{eq:prelim-bounding-diff-newtonian-potential}
    \norm{\K_D^{\epsilon\hat\omega} - \K_D}_{L^2(D) \rightarrow H^2(D)} \leq C_B \epsilon, \quad \hat\omega \in B,
\end{equation}
for all sufficiently small $\epsilon>0$. 

As in \cite{ammari_dielectric_2025}, we reformulate \eqref{eq:nonlinear-helmholtz} as a Lippmann--Schwinger equation. More precisely, let $u^{\mathrm{inc}} \in H^2_{\mathrm{loc}}(\R^3)$ be a prescribed incident field at frequency $\omega \in \C$. We seek a (nontrivial) \emph{nonlinear scattering solution} $0 \not\equiv u = u^{\mathrm{inc}}\restrict{D} + u^{\mathrm{sc}} \in H^2(D)$ such that
    \begin{equation}\label{eq:nonlin-LS-incfield-def}
        u = u^{\mathrm{inc}}\restrict{D} + \tau\omega^2\K_D^\omega[q_{\eta}(u) u] \quad \mathrm{in} \ D,
    \end{equation}
    where $q_\eta(u) = 1 + \eta \abs{u}^2$. In the subwavelength regime, the frequency satisfies $\omega(\tau) \rightarrow 0$ as $\tau \rightarrow \infty$.

\begin{proposition}
    Let $u \in H^2(D)$ be a solution to the nonlinear Lippmann--Schwinger equation \eqref{eq:nonlin-LS-incfield-def}. Then it can be extended to a solution $u \in H^2_{\mathrm{loc}}(\R^3)$ of the nonlinear Helmholtz equation \eqref{eq:nonlinear-helmholtz}.
\end{proposition}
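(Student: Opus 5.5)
The natural extension is to use the right-hand side of the Lippmann--Schwinger equation \eqref{eq:nonlin-LS-incfield-def} itself as the definition of $u$ outside $D$. Set
\begin{equation*}
\tilde u(x) := u^{\mathrm{inc}}(x) + \tau\omega^2\K_D^\omega[q_\eta(u)u](x), \qquad x\in\R^3,
\end{equation*}
where the volume potential is now evaluated at all $x\in\R^3$ via the kernel $G^\omega$. By \eqref{eq:nonlin-LS-incfield-def}, $\tilde u = u$ in $D$, so $\tilde u$ extends $u$. We will show that $\tilde u$ is the desired solution. The only nontrivial input is the regularity of the density, so we handle that first.

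\textbf{Step 1: the density lies in $L^2(D)$.} We need $f:=q_\eta(u)u = u+\eta|u|^2u \in L^2(D)$. Since $D\subset\R^3$ is bounded with $C^{1,1}$ (hence Lipschitz) boundary, the Sobolev embedding gives $H^2(D)\hookrightarrow C^{0}(\overline D)\subset L^\infty(D)$. Therefore $|u|^2u\in L^\infty(D)\subset L^2(D)$, with $\||u|^2u\|_{L^2}\le C\|u\|_{H^2}^3$. This is the step I expect to need the most care. It is exactly where the three-dimensional setting and the $H^2$ regularity enter. Alternatively, $H^2(D)\hookrightarrow L^6(D)$ already suffices, because $\||u|^2u\|_{L^2}=\|u\|_{L^6}^3$.

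\textbf{Step 2: regularity and the PDE.} Extend $f$ by zero outside $D$, so that $\chi_D f\in L^2(\R^3)$ has compact support. The potential $w:=\K_D^\omega[f]$ is the convolution $G^\omega * (\chi_D f)$. Standard Calder\'on--Zygmund estimates for the Newtonian potential then give $w\in H^2_{\mathrm{loc}}(\R^3)$, since the difference $G^\omega - G^0$ has a smoother kernel. Moreover $w$ satisfies $-(\Delta+\omega^2)w=\chi_D f$ in $\R^3$, as recalled after \eqref{eq:prelim-K-volume-operator-def}. Combining this with \eqref{eq:freespace-helmholtz-incident-field} gives
\begin{equation*}
-(\Delta+\omega^2)\tilde u = \tau\omega^2\chi_D q_\eta(u)u = \tau\omega^2\chi_D q_\eta(\tilde u)\tilde u \quad\text{in }\R^3,
\end{equation*}
where the last equality uses $\tilde u=u$ on $D$.

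\textbf{Step 3: transmission and radiation conditions.} Because $\tilde u\in H^2_{\mathrm{loc}}(\R^3)$, the traces of $\tilde u$ and $\nabla\tilde u$ from both sides of the $C^{1,1}$ boundary $\partial D$ coincide. This gives the two transmission conditions in \eqref{eq:nonlinear-helmholtz} automatically.

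For real $\omega>0$, the scattered field $u^{\mathrm{sc}}=\tau\omega^2\K_D^\omega[f]$ is, outside any ball containing $D$, a superposition of outgoing point sources $G^\omega(x-y)$ with $y\in D$. Differentiating under the integral (which is legitimate since $f\in L^1(D)$ and $|x-y|$ is bounded below) gives
\begin{equation*}
\partial_r G^\omega(x-y)-\iu\omega G^\omega(x-y)=O(|x|^{-2})
\end{equation*}
uniformly in $y\in D$. Hence \eqref{eq:sommerfeld} holds. For complex $\omega$, the outgoing condition is understood through the kernel $G^\omega$, so nothing further needs to be checked.
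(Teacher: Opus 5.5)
Your proposal is correct and follows the paper's proof. Both arguments define the extension as $u^{\mathrm{inc}} + \tau\omega^2\K^\omega[\chi_D q_\eta(u)u]$ on all of $\R^3$, use that the outgoing volume potential maps $L^2_{\mathrm{comp}}(\R^3)$ into $H^2_{\mathrm{loc}}(\R^3)$ and inverts $-(\Delta+\omega^2)$, and read off outgoingness and agreement with $u$ on $D$ directly from the construction. You additionally make explicit several points the paper leaves implicit: that $q_\eta(u)u\in L^2(D)$ via $H^2(D)\hookrightarrow L^\infty(D)$, that the transmission conditions follow from $H^2_{\mathrm{loc}}$ regularity across $\partial D$, and a direct check of the Sommerfeld condition for real $\omega>0$.
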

\begin{proof}
    Define the full-space extension of the Lippmann--Schwinger solution $u\in H^2(D)$ as
    \begin{equation}
        u^{\mathrm{tot}}(x) := u^{\mathrm{inc}}(x) + \tau\omega^2\K^\omega[\chi_D q_\eta(u)u], \quad x\in \R^3,
    \end{equation}
    where $\K^\omega$ is the outgoing resolvent of the free Laplacian:
    \begin{equation}
        \K^\omega : L^2_{\mathrm{comp}}(\R^3) \rightarrow H^2_{\mathrm{loc}}(\R^3),
        \qquad
        \K^\omega[\phi](x):= \int_{\R^3}G^\omega(x-y)\phi(y)\dd y.
    \end{equation}
    It follows that
    \begin{equation}
        (-\Delta - \omega^2)u^{\mathrm{tot}} = \tau \omega^2 \chi_D q_\eta(u)u \quad \mathrm{in} \ \R^3,
    \end{equation}
    and $u^{\mathrm{sc}} = u^{\mathrm{tot}} - u^{\mathrm{inc}} = \tau \omega^2 \K^\omega[\chi_D q_\eta(u)u]$ is outgoing by construction. Finally, the restriction of $u^{\mathrm{tot}}$ to $D$ coincides with the original Lippmann--Schwinger solution by \eqref{eq:nonlin-LS-incfield-def}.
\end{proof}

Throughout, we denote by $\hat\omega$ the scaled frequency $\hat\omega := \epsilon^{-1} \omega$, where $\epsilon:= \tau^{-1/2}$. Our interest lies in the scaled nonlinear Lippmann--Schwinger equation in the presence of an incident field $u^{\mathrm{inc}}$, namely
\begin{equation}\label{eq:prelim-nonlin-LS-incfield}
    u = u^{\mathrm{inc}} + \hat\omega^2 \K_D^{\epsilon\hat\omega}[u + \eta\abs{u}^2 u], \quad \mathrm{in} \ D.
\end{equation}
For conciseness, we define
\begin{equation}
    F(u,\hat\omega,\epsilon,\eta,u^{\mathrm{inc}})
    :=u-u^{\mathrm{inc}}
    -\hat\omega^2\K_D^{\epsilon\hat\omega}[u+\eta\abs{u}^2u],
\end{equation}
so that equation \eqref{eq:prelim-nonlin-LS-incfield} can be written compactly as
\begin{equation}\label{eq:prelim-nonlin-LS-compact-notation}
    F(u,\hat\omega,\epsilon,\eta,u^{\mathrm{inc}}) = 0.
\end{equation}
Here, $u$ denotes the total field inside $D$, and the corresponding exterior scattered field is
\begin{equation}\label{eq:scattered-field-equation}
    u^{\mathrm{sc}}(x) = \hat\omega^2\int_D G^{\epsilon\hat\omega}(x-y)
    \big(u(y)+\eta\abs{u(y)}^2u(y)\big)\dd y,
    \quad x \in \R^3 \setminus \overline{D}.
\end{equation}
When $\eta =1$, the nonlinearity coincides with the map $N(u): H^2(D) \rightarrow L^2(D)$ used in \cite{ammari_dielectric_2025}. Moreover, when $u^{\mathrm{inc}}=0$, we recover the nonlinear Lippmann--Schwinger equation from \cite[Eq. 3.3]{ammari_dielectric_2025}. Near a simple linear resonance, the cited work proves the existence and local uniqueness up to a phase of a small-amplitude nonlinear branch after restricting the equation to the level set
\begin{equation}\label{eq:prelim-level-set}
    \norm{u}^2_{L^2(D)} = \mathcal{N},
\end{equation}
where $\mathcal{N}>0$ is the normalization constant.

\subsection{Local Reduction}
From the spectral decomposition of the Newtonian potential, we know that
\begin{equation}
    \K_D[u] = \sum_{j=0}^{\infty}\lambda_j \innerproduct{\phi_j}{u}_D \phi_j,
\end{equation}
where $\lambda_0 > \lambda_1 \geq \lambda_2 \geq \cdots$, counting multiplicities, and $\norm{\phi_j}_{L^2(D)}=1$. Fix a simple eigenvalue $\lambda_j>0$ and let $\phi_j$ be its normalized eigenfunction:
\begin{equation}
    \K_D \phi_j = \lambda_j \phi_j, \quad \norm{\phi_j}_{L^2(D)}=1.
\end{equation}
Since the eigenvalue is nonzero, the definition of the Newtonian potential implies that $\phi_j \in H^2(D)$. Additionally, let $\hat\omega_j := \lambda_j^{-1/2}$.
Let us now define the projections 
\begin{equation}
\begin{aligned}
    &P_j u := \innerproduct{\phi_j}{u}_D \phi_j, \quad Q_j := I - P_j, \\
    & \ker P_j \cap H^2(D) = \{ h\in H^2(D) : \innerproduct{\phi_j}{h}_D = 0\},
\end{aligned}
\end{equation}
where $H^2(D) = \operatorname{span}\{\phi_j\} \oplus (\ker P_j \cap H^2(D))$. Thus, every $u\in H^2(D)$ can be written uniquely as
\begin{equation}\label{eq:decomposition-u}
    u = a\phi_j + h, \quad a = \innerproduct{\phi_j}{u}_D \in \C, \quad h = Q_j u \in \ker P_j \cap H^2(D).
\end{equation}
Before applying the reduction to our nonlinear problem with an incident field, we state and prove the following key lemmas. For conciseness, we denote $X_p := \ker P_j \cap H^2(D)$.
\begin{lemma}\label{lem:complement-newtonian-potential}
    Let $\lambda_j$ be simple. Then there exists a small neighbourhood $U \subset \C$ of $\hat\omega_j$ and a constant $C_0 >0$ such that, for every $\hat\omega \in U$,
    \begin{equation}
        (I-\hat\omega^2 \K_D) : X_p \rightarrow X_p
    \end{equation}
    is invertible and 
    \begin{equation}
        \norm{(I-\hat\omega^2 \K_D)^{-1}}_{X_p \rightarrow X_p} \leq C_0.
    \end{equation}
\end{lemma}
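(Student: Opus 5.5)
The plan is to work first in $L^2(D)$, where $\K_D$ is a compact self-adjoint positive operator, and then lift the estimate to $H^2(D)$ using that $\K_D$ maps $L^2(D)$ boundedly into $H^2(D)$.

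\emph{Invariance.} Since $\K_D$ is self-adjoint and $\K_D\phi_j=\lambda_j\phi_j$, we have $\innerproduct{\phi_j}{\K_D h}_D=\lambda_j\innerproduct{\phi_j}{h}_D=0$ for $h\in X_p$. Together with $\K_D: L^2(D)\to H^2(D)$, this shows that $I-\hat\omega^2\K_D$ maps $X_p$ into $X_p$ for every $\hat\omega\in\C$.

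\emph{Inverse on $L^2$.} Let $Y:=\{h\in L^2(D):\innerproduct{\phi_j}{h}_D=0\}$. By the spectral decomposition, $\K_D|_Y$ has spectrum $\{\lambda_k: k\neq j\}\cup\{0\}$, and because $\lambda_j$ is simple it is at positive distance $\delta:=\operatorname{dist}(\lambda_j,\{\lambda_k:k\ne j\}\cup\{0\})>0$ from this set. For $\hat\omega^2\neq0$, write $I-\hat\omega^2\K_D=\hat\omega^2(\hat\omega^{-2}-\K_D)$. Since $\K_D|_Y$ is normal, the spectral theorem gives
\begin{equation*}
\norm{(I-\hat\omega^2\K_D)^{-1}}_{Y\to Y}=\sup_{k\neq j \text{ or } \mu=0}\frac{1}{\abs{1-\hat\omega^2\mu}}.
\end{equation*}
At $\hat\omega=\hat\omega_j$ we have $1-\hat\omega_j^2\lambda_k=(\lambda_j-\lambda_k)/\lambda_j$, so $\abs{1-\hat\omega_j^2\lambda_k}\ge\delta/\lambda_j$; the same holds for $\mu=0$. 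Taking $U$ to be a small disc around $\hat\omega_j$ with $\abs{\hat\omega^2-\hat\omega_j^2}\,\lambda_0\le\delta/(2\lambda_j)$, each factor satisfies $\abs{1-\hat\omega^2\mu}\ge\delta/(2\lambda_j)$. Hence $\norm{(I-\hat\omega^2\K_D)^{-1}}_{Y\to Y}\le 2\lambda_j/\delta$ uniformly on $U$.

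\emph{Lift to $H^2$.} For $f\in X_p$, let $h\in Y$ be the $L^2$ solution of $h-\hat\omega^2\K_D h=f$. Then $h=f+\hat\omega^2\K_D h$, which lies in $H^2(D)$, so $h\in X_p$. This gives injectivity and surjectivity on $X_p$, together with the bound
\begin{equation*}
\norm{h}_{H^2}\le\norm{f}_{H^2}+\abs{\hat\omega}^2\norm{\K_D}_{L^2\to H^2}\norm{h}_{L^2}\le\bigl(1+\abs{\hat\omega}^2\norm{\K_D}_{L^2\to H^2}\,2\lambda_j/\delta\bigr)\norm{f}_{H^2}.
\end{equation*}
Here we used $\norm{f}_{L^2}\le\norm{f}_{H^2}$. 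The right-hand side is bounded uniformly for $\hat\omega$ in the bounded set $U$, which yields the constant $C_0$.

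The main point needing care is the uniform spectral gap in the second step. It relies on $\lambda_j$ being simple and isolated, which holds because $\K_D$ is compact so its nonzero eigenvalues accumulate only at $0$, and on $0$ being handled separately. Without simplicity, the eigenvector directions of $\lambda_j$ other than $\phi_j$ would remain in $X_p$ and destroy the bound. The lift from $L^2$ to $H^2$ is routine, since the mapping property $\K_D:L^2(D)\to H^2(D)$ is already stated in the preliminaries.
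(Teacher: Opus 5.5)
Your proof is correct and follows essentially the same route as the paper: invariance of $X_p$ via self-adjointness, inversion on the $L^2$-orthogonal complement of $\phi_j$ by the spectral decomposition (including the $\ker\K_D$ component, which you handle through $\mu=0$ and the paper through $g_0$), and then the lift to $H^2(D)$ via $h=f+\hat\omega^2\K_D h$. The only difference is cosmetic. You make the neighbourhood quantitative through the explicit gap $\delta$ and the bound $\abs{1-\hat\omega^2\mu}\geq\delta/(2\lambda_j)$, whereas the paper chooses $U$ qualitatively by avoiding the points $\pm\lambda_k^{-1/2}$, $k\neq j$, and uses $\lambda_k\to0$.
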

\begin{proof}
    First, the map is well defined by the self-adjointness of $\K_D$ on $L^2(D)$. Indeed, if $h \in X_p$, then $\innerproduct{\phi_j}{\K_D h}_D = \lambda_j \innerproduct{\phi_j}{h}_D = 0$.
    Since $\lambda_j$ is an isolated simple eigenvalue, we can choose a neighbourhood $U \subset \C$ of $\hat\omega_j := \lambda_j^{-1/2}$ such that $\overline{U}\cap \{\pm \lambda_k^{-1/2} : k \neq j\} = \varnothing$. Since $\lambda_k \rightarrow0$ as $k\rightarrow \infty$ and $\overline{U}$ is bounded, it follows that
    \begin{equation}
         \inf_{\hat\omega \in \overline{U}} \inf_{k\neq j} \abs{1 - \hat\omega^2 \lambda_k} > 0, 
    \end{equation}
	   where $\{ \lambda_k \}$ denotes the nonzero eigenvalues of $\K_D$, repeated according to their multiplicity. Let $g\in X_p$ and consider solving $(I-\hat\omega^2\K_D)h = g$ in $L^2(D)$. By the spectral theorem,
    \begin{equation}
        L^2(D) = \ker\K_D \oplus \overline{\operatorname{span}\{\phi_k : \lambda_k \neq 0\}},
    \end{equation}
    and, since $\ker\K_D \perp \phi_j$, every $g\in X_p$ can be written as
    \begin{equation}
        g = g_0 + \sum_{k\neq j}{g_k \phi_k}, \quad g_0 \in \ker\K_D.
    \end{equation}
    Hence, we define
    \begin{equation}
        h := g_0 + \sum_{k \neq j}{\frac{g_k}{1-\hat\omega^2 \lambda_k}\phi_k}, 
    \end{equation}
    where the series converges in $L^2(D)$ and satisfies
    \begin{equation}
        \norm{h}_{L^2(D)} \leq \dfrac{\norm{g}_{L^2(D)}}{\inf_{\hat\omega \in U} \inf_{k\neq j} \abs{1 - \hat\omega^2 \lambda_k}}.
    \end{equation}
    Finally, the identity $h=g+\hat\omega^2\K_Dh$, together with $g\in H^2(D)$ and $\K_Dh\in H^2(D)$ for $h\in L^2(D)$, shows that $h\in H^2(D)$ and hence $h\in X_p$. Since $U$ is bounded, we also obtain
    \begin{equation}
        \norm{h}_{H^2(D)} \leq \norm{g}_{H^2(D)} + C_1 \norm{h}_{L^2(D)} \leq C \norm{g}_{H^2(D)},
    \end{equation}
    for some positive constants $C_1, C >0$, with $C$ independent of $\hat\omega \in U$. Hence, $(I- \hat\omega^2\K_D)^{-1}:X_p \rightarrow X_p$ exists and is uniformly bounded for $\hat\omega \in U$.
    
\end{proof}

Next, we establish the analogous statement for the volume operator $\K_D^\omega$.
\begin{lemma}\label{lem:complement-K-d-omega}
    Suppose that $\lambda_j$ is a simple eigenvalue. Then there exist a small neighbourhood $U \subset \C$ of $\hat\omega_j$, a number $\epsilon_0 > 0$, and a constant $C>0$ such that, for every $0 < \epsilon \leq \epsilon_0$ and $\hat\omega \in U$,
    \begin{equation}
        Q_j(I - \hat\omega^2 \K_D^{\epsilon\hat\omega}) : X_p \rightarrow X_p
    \end{equation}
    is invertible and 
    \begin{equation}
        \norm{\big( Q_j(I-\hat\omega^2\K_D^{\epsilon\hat\omega}) \big)^{-1}}_{X_p \rightarrow X_p} \leq C.
    \end{equation}
\end{lemma}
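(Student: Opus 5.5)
The plan is a perturbation (Neumann series) argument around the operator of Lemma~\ref{lem:complement-newtonian-potential}. For $h \in X_p$ I split
\begin{equation*}
    Q_j(I - \hat\omega^2 \K_D^{\epsilon\hat\omega})h = Q_j(I-\hat\omega^2\K_D)h - \hat\omega^2 Q_j(\K_D^{\epsilon\hat\omega} - \K_D)h .
\end{equation*}
On $X_p$ we have $Q_j h = h$. Moreover, by the invariance shown in the proof of Lemma~\ref{lem:complement-newtonian-potential}, $\K_D h \in X_p$, so $Q_j\K_D h = \K_D h$. Hence the first term is exactly $A(\hat\omega) := (I-\hat\omega^2\K_D)|_{X_p}$.

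Before estimating, I check that every operator involved maps $X_p$ into $X_p$. Since $\phi_j \in H^2(D)$, the projection $P_j$ is bounded on $H^2(D)$, with $\norm{P_j u}_{H^2} \le \norm{\phi_j}_{H^2}\norm{u}_{L^2}$. Therefore $Q_j$ is bounded $H^2(D)\to X_p$ with a constant $C_Q$ independent of the parameters. By \eqref{eq:prelim-K-volume-operator-expansion}, $\K_D^{\epsilon\hat\omega}$ maps $L^2(D)$ into $H^2(D)$, and in particular $X_p$ into $H^2(D)$. So $B(\hat\omega,\epsilon) := \hat\omega^2 Q_j(\K_D^{\epsilon\hat\omega}-\K_D)$ is a bounded operator $X_p \to X_p$. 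We equip $X_p$ with the $H^2$ norm; it is a closed subspace of $H^2(D)$, since $u\mapsto\innerproduct{\phi_j}{u}_D$ is continuous, and hence it is a Banach space.

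For the estimates, I first fix $U$ as in Lemma~\ref{lem:complement-newtonian-potential}, shrinking it if needed so that it is bounded. That lemma then gives $\norm{A(\hat\omega)^{-1}}_{X_p\to X_p}\le C_0$ for $\hat\omega\in U$. Next I apply \eqref{eq:prelim-bounding-diff-newtonian-potential} with $B=U$, using $\norm{\cdot}_{L^2}\le\norm{\cdot}_{H^2}$. This gives
\begin{equation*}
    \norm{B(\hat\omega,\epsilon)}_{X_p\to X_p} \le \sup_{U}|\hat\omega|^2\, C_Q\, C_U\,\epsilon .
\end{equation*}
I choose $\epsilon_0$ so that this bound is at most $1/(2C_0)$. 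Then
\begin{equation*}
    Q_j(I-\hat\omega^2\K_D^{\epsilon\hat\omega}) = A\bigl(I - A^{-1}B\bigr) \quad\text{with}\quad \norm{A^{-1}B}\le 1/2 .
\end{equation*}
The Neumann series yields invertibility on $X_p$, with inverse bounded by $2C_0 =: C$, uniformly for $0<\epsilon\le\epsilon_0$ and $\hat\omega\in U$.

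The only delicate point is the uniformity of the constant $C_U$ in \eqref{eq:prelim-bounding-diff-newtonian-potential} over $\hat\omega \in U$. This is guaranteed because the bound is stated for bounded sets. Alternatively, it follows directly from the operator-norm convergent series $\sum_{n\ge1}(\epsilon\hat\omega)^n\K_{D,n}$, where the norms of the $\K_{D,n}$ are summable for $|\epsilon\hat\omega|$ small, since $D$ is bounded and the kernels $G_n$ are explicit. The rest is routine.
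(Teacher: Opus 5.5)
Your proposal is correct and matches the paper's proof essentially step for step. You treat $\hat\omega^2 Q_j(\K_D^{\epsilon\hat\omega}-\K_D)$ as an $O(\epsilon)$ perturbation on $X_p$, using \eqref{eq:prelim-bounding-diff-newtonian-potential} and the boundedness of $Q_j$ on $H^2(D)$, factor out $(I-\hat\omega^2\K_D)|_{X_p}$ via Lemma~\ref{lem:complement-newtonian-potential}, and close with a Neumann series for $\epsilon_0$ small enough that the relative perturbation has norm at most $1/2$, which yields the uniform bound $2C_0$. Your additional checks (closedness of $X_p$ in $H^2(D)$, a parameter-independent bound for $Q_j$, and uniformity of the constant over bounded $U$) are details the paper leaves implicit.
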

\begin{proof}
    Using the expansion of $\K_D^\omega$, we write
    \begin{equation}
        Q_j(I-\hat\omega^2\K_D^{\epsilon\hat\omega}) = Q_j(I-\hat\omega^2\K_D)  - \hat\omega^2Q_j(\K_D^{\epsilon\hat\omega} - \K_D)\restrict{X_p}.
    \end{equation}
    Since $U$ is bounded and $Q_j$ is a bounded projection on $H^2(D)$, \eqref{eq:prelim-bounding-diff-newtonian-potential} implies that there exists a constant $C_B >0$ such that
    \begin{equation}
        \norm{- \hat\omega^2Q_j(\K_D^{\epsilon\hat\omega} - \K_D)}_{X_p \rightarrow X_p} \leq C_B \epsilon, \quad \hat\omega \in U.
    \end{equation}
    Moreover, $Q_j(I-\hat\omega^2\K_D) \equiv (I-\hat\omega^2\K_D)\restrict{X_p}$, and Lemma \ref{lem:complement-newtonian-potential} yields
    \begin{equation}\label{eq:complicated-term}
        \norm{(Q_j(I-\hat\omega^2\K_D))^{-1} (- \hat\omega^2Q_j(\K_D^{\epsilon\hat\omega} - \K_D))}_{X_p \rightarrow X_p} \leq C_0 C_B \epsilon.
    \end{equation}
    Thus, we can choose $\epsilon_0 > 0$ such that $C_0C_B\epsilon_0 \leq \frac{1}{2}$ and denote the operator inside the norm in \eqref{eq:complicated-term} by $A$. It follows that $I + A$ is invertible by a Neumann series and
    \begin{equation}
        (Q_j(I-\hat\omega^2\K_D^{\epsilon\hat\omega}))^{-1} = (I + A)^{-1}(Q_j(I-\hat\omega^2\K_D))^{-1},
    \end{equation}
   and therefore $\norm{(Q_j(I-\hat\omega^2\K_D^{\epsilon\hat\omega}))^{-1}}_{X_p \rightarrow X_p} \leq 2 C_0$, uniformly in $\hat\omega \in U$ and $0 < \epsilon \leq \epsilon_0$.
\end{proof}
We now use the decomposition $u = a\phi_j + h$ and apply the projection $Q_j$ to equation \eqref{eq:prelim-nonlin-LS-incfield} to obtain
\begin{equation}\label{eq:Qprojected-incident-field-equation}
\begin{aligned}
    &Q_j(u - u^{\mathrm{inc}} - \hat\omega^2\K_D^{\epsilon\hat\omega}[u + \eta\abs{u}^2u]) = 0 \\
    \iff &Q_j(I-\hat\omega^2\K_D^{\epsilon\hat\omega})[h] = Q_j u^{\mathrm{inc}} + \hat\omega^2 Q_j\K_D^{\epsilon\hat\omega}[a \phi_j] +\eta\hat\omega^2Q_j \K_D^{\epsilon\hat\omega}[ \abs{a\phi_j + h}^2 (a\phi_j +h)].
\end{aligned}
\end{equation}
\begin{remark}\label{rmk:bound-easy-term}
    The term involving $a\phi_j$ is small because $Q_j\K_D[a\phi_j] = a Q_j(\lambda_j \phi_j) = 0$. Consequently,
    \begin{equation}
        \norm{Q_j\K_D^{\epsilon\hat\omega}[a\phi_j]}_{H^2(D)}
        = \norm{aQ_j(\K_D^{\epsilon\hat\omega}-\K_D)\phi_j}_{H^2(D)}
        \leq C \epsilon \abs{a}
    \end{equation}
    for some constant $C>0$.
\end{remark}
We are now ready to formulate a local decomposition result using Lyapunov--Schmidt reduction.

\begin{proposition}\label{prop:local-reduction}
    Consider, as before, a small neighbourhood $U \subset \C$ of $\hat\omega_j$, and fix $\eta_{\mathrm{max}}>0$. There exist $r,R>0$ and $\epsilon_0>0$ such that the following holds. For every $0<\eta<\eta_{\mathrm{max}}$, $0<\epsilon<\epsilon_0$, $\hat\omega \in U$, and $\abs{a} +\norm{u^{\mathrm{inc}}}_{H^2(D)} \leq r$, equation \eqref{eq:Qprojected-incident-field-equation} has a unique solution $h = W_\eta(a, \hat\omega, \epsilon, u^{\mathrm{inc}}) \in X_p$ in the ball $\norm{h}_{H^2(D)} \leq R$. This solution map is bounded on the parameter neighbourhood and jointly real analytic in $(a,\hat\omega,\epsilon,\eta,u^{\mathrm{inc}})$. Moreover, define
    \begin{equation}\label{eq:def-reduced-eq}
    G_j(a,\hat\omega,\epsilon,\eta,u^{\mathrm{inc}})
    :=\innerproduct{\phi_j}
    {F(a\phi_j+W_\eta(a,\hat\omega,\epsilon,u^{\mathrm{inc}}),
    \hat\omega,\epsilon,\eta,u^{\mathrm{inc}})}_D.
    \end{equation}
    Then every solution of equation \eqref{eq:prelim-nonlin-LS-incfield} satisfying the above parameter constraints and $\norm{Q_ju}_{H^2(D)}\leq R$ is uniquely of the form
    \begin{equation}\label{eq:decomp-u-into-reduced-W}
        u = a\phi_j + W_\eta(a,\hat\omega,\epsilon, u^{\mathrm{inc}}),
    \end{equation}
    and satisfies
    \begin{equation}\label{eq:G-reduced-functional}
        G_j(a,\hat\omega,\epsilon,\eta,u^{\mathrm{inc}}) = 0.
    \end{equation}
    For brevity, write $W_\eta:=W_\eta(a,\hat\omega,\epsilon,u^{\mathrm{inc}})$. Expanding this definition gives the following explicit nonlinear scalar equation for the amplitude coefficient:
    \begin{equation}\label{eq:amplitude-eq}
        \begin{aligned}
        &\left(1 - \hat\omega^2
        \innerproduct{\phi_j}{\K_D^{\epsilon\hat\omega}[\phi_j]}_D\right)a\\
        &\quad= \innerproduct{\phi_j}{u^{\mathrm{inc}}}_D
        + \hat\omega^2\innerproduct{\phi_j}{\K_D^{\epsilon\hat\omega}[W_\eta]}_D\\
        &\qquad
        + \hat\omega^2\eta\innerproduct{\phi_j}{\K_D^{\epsilon\hat\omega}
        [\abs{a\phi_j + W_\eta}^2 (a\phi_j + W_\eta)]}_D.
        \end{aligned}
    \end{equation}
    Conversely, every solution $a$ of \eqref{eq:G-reduced-functional} satisfying the stated constraints yields a solution of \eqref{eq:prelim-nonlin-LS-incfield} through \eqref{eq:decomp-u-into-reduced-W}.
\end{proposition}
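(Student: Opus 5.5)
The plan is to recast the $Q_j$-projected equation \eqref{eq:Qprojected-incident-field-equation} as a fixed-point problem on the closed ball $B_R:=\{h\in X_p:\norm{h}_{H^2(D)}\le R\}$. We then solve it by the contraction mapping principle, uniformly in the parameters, and obtain analyticity from the analytic implicit function theorem. Write $L:=Q_j(I-\hat\omega^2\K_D^{\epsilon\hat\omega})|_{X_p}$, which by Lemma \ref{lem:complement-K-d-omega} is invertible with $\norm{L^{-1}}\le C$ uniformly for $\hat\omega\in U$ and $0<\epsilon\le\epsilon_0$. Define
\begin{equation*}
T(h):=L^{-1}\Big(Q_ju^{\mathrm{inc}}+\hat\omega^2Q_j\K_D^{\epsilon\hat\omega}[a\phi_j]+\eta\hat\omega^2Q_j\K_D^{\epsilon\hat\omega}\big[\abs{a\phi_j+h}^2(a\phi_j+h)\big]\Big).
\end{equation*}
A solution $h\in B_R$ of \eqref{eq:Qprojected-incident-field-equation} is then exactly a fixed point of $T$.

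The key analytic input is that $H^2(D)\hookrightarrow L^\infty(D)$ in three dimensions, since $D$ is bounded and $C^{1,1}$. Hence $v\mapsto\abs{v}^2v=v^2\bar v$ is a bounded real-trilinear (cubic) map $H^2(D)\to L^2(D)$ with $\norm{\abs{v}^2v}_{L^2}\le C\norm{v}_{H^2}^3$. Consequently
\begin{equation*}
\norm{\abs{v}^2v-\abs{w}^2w}_{L^2}\le C(\norm{v}_{H^2}+\norm{w}_{H^2})^2\norm{v-w}_{H^2}.
\end{equation*}
Composing with the uniformly bounded map $\K_D^{\epsilon\hat\omega}:L^2\to H^2$ from \eqref{eq:prelim-bounding-diff-newtonian-potential}, and using Remark \ref{rmk:bound-easy-term} for the linear $a$-term, we get
\begin{equation*}
\norm{T(h)}_{H^2}\le C\big(\norm{u^{\mathrm{inc}}}_{H^2}+\epsilon\abs{a}+\eta_{\max}(r+R)^3\big),\qquad \norm{T(h)-T(g)}_{H^2}\le C\eta_{\max}(r+R)^2\norm{h-g}_{H^2}.
\end{equation*}
We choose $R$ small so that $C\eta_{\max}(2R)^2\le\tfrac12$ and $C\eta_{\max}(2R)^3\le R/4$, and then $r\le R$ with $Cr(1+\epsilon_0)\le R/4$. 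With these choices $T$ maps $B_R$ into itself and is a contraction, giving a unique fixed point $W_\eta$. All constants are uniform over the parameter set, so $W_\eta$ is bounded by $R$.

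For joint real analyticity, we regard $\mathcal{F}(h;a,\hat\omega,\epsilon,\eta,u^{\mathrm{inc}}):=h-T(h)$ as a map between real Banach spaces. It is real analytic: the cubic term is a bounded real polynomial in $(h,\bar h)$, the dependence on $(\hat\omega,\epsilon)$ enters through the convergent operator series \eqref{eq:prelim-K-volume-operator-expansion}, and $L^{-1}$ depends analytically on $(\hat\omega,\epsilon)$ by its Neumann series representation. Its $h$-derivative is $I-\partial_hT$ with $\norm{\partial_hT}\le\tfrac12$ on $B_R$, so it is invertible. The analytic implicit function theorem, together with the uniqueness in $B_R$, then shows that the fixed point coincides locally with the analytic implicit branch. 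Hence $W_\eta$ is real analytic in all its arguments. We expect this step to be the main obstacle: one has to treat $\epsilon$ as a real variable through the series in $\epsilon\hat\omega$ (including at $\epsilon$ up to $0$), and $a,u^{\mathrm{inc}}$ as real variables because of the $\abs{\cdot}^2$ nonlinearity.

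Finally, the reduction is bookkeeping. Since $F=P_jF+Q_jF$ and $P_jF=\innerproduct{\phi_j}{F}_D\phi_j$, the equation $F(u)=0$ is equivalent to the pair $Q_jF(a\phi_j+h)=0$ and $\innerproduct{\phi_j}{F(a\phi_j+h)}_D=0$. If $u$ solves \eqref{eq:prelim-nonlin-LS-incfield} with $\norm{Q_ju}_{H^2}\le R$, then $h=Q_ju$ solves the projected equation in $B_R$, so uniqueness forces $h=W_\eta$, and the $P_j$-component gives $G_j=0$. Conversely, if $G_j(a,\dots)=0$, both components of $F(a\phi_j+W_\eta)$ vanish, so $u=a\phi_j+W_\eta$ solves the full equation. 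The explicit form \eqref{eq:amplitude-eq} follows by expanding $\innerproduct{\phi_j}{F}_D$ using $\innerproduct{\phi_j}{a\phi_j+W_\eta}_D=a$, since $W_\eta\in X_p$ and $\norm{\phi_j}=1$.
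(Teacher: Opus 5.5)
Your proof is correct, but it takes a different route from the paper's proof of this proposition.

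\textbf{The paper's route.} The paper applies the real-analytic implicit function theorem to $\mathcal{A}$ at the single base point $(\hat\omega,a,h,\epsilon,u^{\mathrm{inc}})=(\hat\omega_j,0,0,0,0)$. There $\partial_h\mathcal{A}=I-\hat\omega_j^2\K_D$ on $X_p$ is invertible by Lemma~\ref{lem:complement-newtonian-potential}. Uniformity in $\eta\in[0,\eta_{\mathrm{max}}]$ then comes from compactness and the fact that this base derivative does not depend on $\eta$. Existence, local uniqueness and analyticity all come out in one stroke. The price is that the neighbourhood sizes are not explicit, and uniqueness is only obtained in some neighbourhood of the base point. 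The quantitative information about $W_\eta$ has to be recovered afterwards, via the continuity/compactness argument of Lemma~\ref{lem:helper-W-bound} and the absorption argument of Lemma~\ref{lem:uniform-W-bound}.

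\textbf{Your route.} You run a quantitative contraction on $B_R$, built on the uniform inverse from Lemma~\ref{lem:complement-K-d-omega}. This is exactly the technique the paper reserves for Proposition~\ref{prop:alternative-regime-local-reduction}, except that your smallness comes from $r$ and $R$ rather than from $\eta$. You invoke the implicit function theorem only afterwards, to upgrade to analyticity.

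\textbf{What each buys.} Your route gives:
\begin{itemize}
\item explicit radii $r,R$;
\item uniqueness in the whole ball $B_R$ for every admissible parameter in $U\times(0,\epsilon_0)\times(0,\eta_{\mathrm{max}})$, with no compactness patching;
\item the content of Lemmas~\ref{lem:helper-W-bound} and \ref{lem:uniform-W-bound} essentially for free. Your self-map estimate at the fixed point already reads
\[
\norm{W_\eta}_{H^2(D)}\le C\big(\norm{Q_ju^{\mathrm{inc}}}_{H^2(D)}+\epsilon\abs{a}+\eta(\abs{a}+\norm{W_\eta}_{H^2(D)})^3\big).
\]
The cubic term in $\norm{W_\eta}_{H^2(D)}$ is absorbed by the same smallness condition $C\eta_{\mathrm{max}}(2R)^2\le\tfrac12$ that you used for the contraction.
\end{itemize}
The paper's route is shorter and follows the standard Lyapunov--Schmidt template.

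The final equivalence with $G_j=0$ and the expansion leading to \eqref{eq:amplitude-eq} are identical in both arguments.
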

\begin{proof}
    Consider the neighbourhood $U_{\hat\omega_j} := \{ \hat\omega \in \C : \abs{\hat\omega - \hat\omega_j} < \delta \}$ for a sufficiently small $\delta >0$. Denote the left-hand side of \eqref{eq:Qprojected-incident-field-equation} by
    \begin{equation}
        \mathcal{A}(\hat\omega, a, h, \epsilon,\eta,u^{\mathrm{inc}})
        := Q_j(a\phi_j + h) - Q_ju^{\mathrm{inc}}
        - \hat\omega^2 Q_j\K_D^{\epsilon\hat\omega}
        [a\phi_j + h + \eta \abs{a\phi_j+h}^2(a\phi_j+h)],
    \end{equation}
    where $\mathcal{A}$ is real analytic as a map
    \begin{equation}
        \mathcal{A}:U_{\hat\omega_j}\times\C\times X_p\times\R\times\R
        \times H^2(D)\longrightarrow X_p.
    \end{equation}
    For every $\eta\in[0,\eta_{\mathrm{max}}]$, we have
    \begin{equation}
    \begin{aligned}
        \mathcal{A}(\hat\omega_j,0,0,0,\eta,0) &= 0,\\
        \partial_h\mathcal{A}(\hat\omega_j,0,0,0,\eta,0)[v]
        &= Q_j(I-\hat\omega_j^2\K_D)[v]
        = (I-\hat\omega_j^2\K_D)[v], \qquad v \in X_p.
        \end{aligned}
    \end{equation}
    Lemma \ref{lem:complement-newtonian-potential} shows that this derivative is invertible and independent of $\eta$. The parameter-dependent real-analytic implicit function theorem gives a local solution for every $\eta$. By the joint analyticity of $\mathcal{A}$, the compactness of $[0,\eta_{\mathrm{max}}]$, and the fact that the base derivative is independent of $\eta$, the parameter neighbourhood can be chosen uniformly in $\eta$. We therefore obtain a common neighbourhood $U_{(0,\hat\omega_j,0,0)}$ of $(0,\hat\omega_j,0,0)$ and a family
    \begin{equation}
        W:U_{(0,\hat\omega_j,0,0)}\times[0,\eta_{\mathrm{max}}]
        \longrightarrow X_p,
        \qquad
        W_\eta(a,\hat\omega,\epsilon,u^{\mathrm{inc}})
        :=W(a,\hat\omega,\epsilon,u^{\mathrm{inc}},\eta).
    \end{equation}
    For the remainder of the proof, write $W_\eta:=W_\eta(a,\hat\omega,\epsilon,u^{\mathrm{inc}})$.
    The map extends real-analytically to an open neighbourhood of its domain and satisfies
    \begin{equation}
        \mathcal{A}(\hat\omega,a,W_\eta,\epsilon,\eta,u^{\mathrm{inc}})=0.
    \end{equation}
    Thus, \eqref{eq:Qprojected-incident-field-equation} admits a unique local solution $h=W_\eta(a,\hat\omega,\epsilon,u^{\mathrm{inc}})$ for parameters close to $(0,\hat\omega_j,0,0)$, uniformly for $\eta\in[0,\eta_{\mathrm{max}}]$. Finally, \eqref{eq:prelim-nonlin-LS-incfield} is equivalent to the projected system $Q_jF=0$ and $P_jF=0$. By local uniqueness, the first equation is equivalent to $h=W_\eta(a,\hat\omega,\epsilon,u^{\mathrm{inc}})$, whereas the second is equivalent to
    \begin{equation}\label{eq:Pj-equation}
        \innerproduct{\phi_j}{u - u^\mathrm{inc} - \hat\omega^2\K_D^{\epsilon\hat\omega}[u + \eta \abs{u}^2 u]}_D = 0.
    \end{equation}
    By substituting the decomposition $u= a\phi_j + W_\eta$ into \eqref{eq:Pj-equation} and using that $\innerproduct{\phi_j}{W_\eta}_D = 0$, we obtain the nonlinear equation \eqref{eq:amplitude-eq} for the coefficient $a$.
\end{proof}

\begin{remark}
    Proposition \ref{prop:local-reduction} applies when both the incident wave $u^\mathrm{inc}$ and the resonant amplitude $a$ are sufficiently small, while the nonlinearity strength remains bounded, $\eta=O(1)$. Here $\eta>0$ is fixed independently of $\epsilon$, $a$, and $\hat\omega$, and the construction is uniform for $0<\eta<\eta_{\mathrm{max}}$. It is also useful to recast the proposition in a second regime, in which the incident wave is of order one and $\eta$ is arbitrarily small.
\end{remark}

\begin{proposition}\label{prop:alternative-regime-local-reduction}
    Let $U\subset\C$ be a sufficiently small neighbourhood of $\hat\omega_j$. For every fixed $r, M >0$, there exist $\eta_0 >0$, $\epsilon_0 >0$, a constant $C>0$, and a radius $R>0$ such that the following holds. For every $\abs{a}<r$, $\norm{u^{\mathrm{inc}}}_{H^2(D)}<M$, $\hat\omega\in U$, $0<\epsilon\leq \epsilon_0$, and $\abs{\eta} \leq \eta_0$, equation \eqref{eq:Qprojected-incident-field-equation} has a unique solution $h=W_{\eta}(a,\hat\omega,\epsilon,u^{\mathrm{inc}}) \in X_p$ in the ball $\norm{h}_{H^2(D)} \leq R$. The corresponding linear solution is
    \begin{equation}
        h_{\mathrm{lin}} = \big( Q_j(I-\hat\omega^2 \K_D^{\epsilon\hat\omega})\big)^{-1} \big( Q_j u^{\mathrm{inc}} + \hat\omega^2 Q_j\K_D^{\epsilon\hat\omega}[a\phi_j] \big).
    \end{equation}
    Moreover, the map $W_\eta$ satisfies
    \begin{equation}
        \norm{W_{\eta} - h_{\mathrm{lin}}}_{H^2(D)} \leq C \abs{\eta}(\norm{Q_j u^{\mathrm{inc}}}_{H^2(D)} + \abs{a})^3.
    \end{equation}
    Define
    \begin{equation}
        G_j(a,\hat\omega,\epsilon,\eta,u^{\mathrm{inc}})
        :=\innerproduct{\phi_j}
        {F(a\phi_j+W_\eta(a,\hat\omega,\epsilon,u^{\mathrm{inc}}),
        \hat\omega,\epsilon,\eta,u^{\mathrm{inc}})}_{D}.
    \end{equation}
    Then every solution $u$ of equation \eqref{eq:prelim-nonlin-LS-incfield} satisfying $\abs{\innerproduct{\phi_j}{u}_D} < r$ and $\norm{Q_j u }_{H^2(D)} \leq M$ is uniquely of the form
    \begin{equation}
        u = a\phi_j + W_\eta(a,\hat\omega, \epsilon,u^{\mathrm{inc}}),
    \end{equation}
    where $a$ satisfies $G_j(a,\hat\omega,\epsilon,\eta,u^{\mathrm{inc}})=0$.
\end{proposition}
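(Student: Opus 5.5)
We treat the $Q_j$-projected equation \eqref{eq:Qprojected-incident-field-equation} as a fixed-point problem that is a small perturbation of a linear one. By Lemma \ref{lem:complement-K-d-omega}, for $\hat\omega\in U$ and $0<\epsilon\le\epsilon_0$ the operator $L:=Q_j(I-\hat\omega^2\K_D^{\epsilon\hat\omega})$ is invertible on $X_p$ with $\norm{L^{-1}}\le C$, uniformly in the parameters. Set $f:=Q_ju^{\mathrm{inc}}+\hat\omega^2Q_j\K_D^{\epsilon\hat\omega}[a\phi_j]$. Equation \eqref{eq:Qprojected-incident-field-equation} is then equivalent to $h=T(h)$, where
\begin{equation*}
T(h):=h_{\mathrm{lin}}+\eta\hat\omega^2L^{-1}Q_j\K_D^{\epsilon\hat\omega}\big[\abs{a\phi_j+h}^2(a\phi_j+h)\big],\qquad h_{\mathrm{lin}}=L^{-1}f.
\end{equation*}
The uniform bound on $L^{-1}$, Remark \ref{rmk:bound-easy-term}, and boundedness of $U$ give $\norm{h_{\mathrm{lin}}}_{H^2(D)}\le C_1(\norm{Q_ju^{\mathrm{inc}}}_{H^2(D)}+\abs{a})\le C_1(M'+r)$, where $M'$ bounds $\norm{Q_j u^{\mathrm{inc}}}_{H^2(D)}$ (at most a constant times $M$).

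Next we control the cubic term. Since $D\subset\R^3$ is bounded, $H^2(D)\hookrightarrow L^\infty(D)$, so $H^2(D)$ is a Banach algebra under pointwise multiplication, and it is closed under conjugation. Hence $v\mapsto\abs{v}^2v=v\,v\,\overline v$ is a real-analytic map $H^2(D)\to H^2(D)\subset L^2(D)$, with
\begin{equation*}
\norm{\abs{v}^2v}\le C\norm{v}^3,\qquad \norm{\abs{v}^2v-\abs{w}^2w}\le C(\norm{v}^2+\norm{w}^2)\norm{v-w}.
\end{equation*}
Composed with the bounded operators $\K_D^{\epsilon\hat\omega}:L^2\to H^2$, $Q_j$ and $L^{-1}$, this shows that for $\norm{h}\le R$,
\begin{equation*}
\norm{T(h)-h_{\mathrm{lin}}}\le C_2\abs{\eta}(r+R)^3,\qquad \norm{T(h)-T(g)}\le C_2\abs{\eta}(r+R)^2\norm{h-g}.
\end{equation*}
We choose $R:=2C_1(M'+r)$ and then $\eta_0$ small enough that $C_2\eta_0(r+R)^3\le R/2$ and $C_2\eta_0(r+R)^2\le 1/2$. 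With these choices $T$ is a contraction of the closed ball of radius $R$ in $X_p$ into itself. The Banach fixed-point theorem then gives existence and uniqueness of $W_\eta$ in the ball. Real analyticity in the parameters follows from the analytic implicit function theorem, since $I-\partial_hT$ is invertible by the contraction bound.

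For the estimate on $W_\eta-h_{\mathrm{lin}}$, write $W_\eta-h_{\mathrm{lin}}=\eta\hat\omega^2L^{-1}Q_j\K_D^{\epsilon\hat\omega}[\abs{a\phi_j+W_\eta}^2(a\phi_j+W_\eta)]$. This gives
\begin{equation*}
\norm{W_\eta-h_{\mathrm{lin}}}\le C\abs{\eta}\,(\abs{a}+\norm{W_\eta})^3.
\end{equation*}
The fixed-point bound gives $\norm{W_\eta}\le\norm{h_{\mathrm{lin}}}+\tfrac12\norm{W_\eta}+C\abs{\eta}\abs{a}^3$. Hence $\norm{W_\eta}\le C(\norm{Q_ju^{\mathrm{inc}}}+\abs{a})$, and substituting back yields the claimed cubic bound. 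This is the step I expect to be the main obstacle. The constants must be tracked so that they depend only on $r$ and $M$, and the radius $R$ must be tied to $M$ so that the uniqueness class $\norm{Q_ju}\le M$ in the final claim is covered. To do this I would take $R\ge\max(M,2C_1(M'+r))$, shrinking $\eta_0$ accordingly.

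Finally, the decomposition argument repeats the end of the proof of Proposition \ref{prop:local-reduction}. Let $u$ solve \eqref{eq:prelim-nonlin-LS-incfield} and put $a=\innerproduct{\phi_j}{u}_D$ and $h=Q_ju$. Then $Q_jF=0$ says that $h$ solves \eqref{eq:Qprojected-incident-field-equation} with $\norm{h}\le M\le R$. By uniqueness in the ball, $h=W_\eta(a,\hat\omega,\epsilon,u^{\mathrm{inc}})$. The remaining equation $P_jF=0$ is exactly $G_j(a,\hat\omega,\epsilon,\eta,u^{\mathrm{inc}})=0$, and uniqueness of the representation follows from uniqueness of the splitting \eqref{eq:decomposition-u}.
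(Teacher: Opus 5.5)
Your proposal is correct and follows essentially the same route as the paper. Both arguments run a Banach fixed-point argument for $h=h_{\mathrm{lin}}+\eta(\cdots)$ on a closed ball in $X_p$ whose radius is at least $M$ (the paper takes $R=\max\{M,C_0(M+r)\}+1$), shrinking $\eta_0$ to get self-mapping and a contraction. They then absorb terms to obtain $\|W_\eta\|_{H^2(D)}\le C(\|Q_ju^{\mathrm{inc}}\|_{H^2(D)}+|a|)$, derive the cubic bound on $W_\eta-h_{\mathrm{lin}}$, and apply the implicit function theorem for analyticity. Your explicit derivation of the final decomposition from uniqueness in the ball (using $R\ge M$) spells out a step the paper leaves implicit. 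Using the algebra property of $H^2(D)$ instead of the paper's $L^\infty$--$L^2$ estimate makes no material difference.
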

\begin{proof}
    Fix $(a,\hat\omega, \epsilon, u^{\mathrm{inc}}) \in \C \times U \times \R \times H^2(D)$, decompose $u=a\phi_j+h$ as in \eqref{eq:decomposition-u}, and consider the linear case of \eqref{eq:Qprojected-incident-field-equation}, namely $\eta=0$:
    \begin{equation}\label{eq:linear-Q-proj-equation}
        Q_j(I-\hat\omega^2\K_D^{\epsilon\hat\omega})[h] = Q_j u^{\mathrm{inc}} + \hat\omega^2 Q_j\K_D^{\epsilon\hat\omega}[a\phi_j].
    \end{equation}
    Lemma \ref{lem:complement-K-d-omega} shows that the left-hand side is uniformly invertible for $\hat\omega \in U$ and sufficiently small $0<\epsilon \leq \epsilon_0$. Thus, the solution of \eqref{eq:linear-Q-proj-equation} is
    \begin{equation}
        h_{\mathrm{lin}} = \big(Q_j(I-\hat\omega^2\K_D^{\epsilon\hat\omega})\big)^{-1}
        \big(Q_ju^{\mathrm{inc}} + \hat\omega^2 Q_j\K_D^{\epsilon\hat\omega}[a\phi_j]\big).
    \end{equation}
    By the same lemma and Remark \ref{rmk:bound-easy-term}, there is a constant $C_0>0$ such that
    \begin{equation}\label{eq:bound-hlin}
        \norm{h_{\mathrm{lin}}}_{H^2(D)}
        \leq C_0\big(\norm{Q_ju^{\mathrm{inc}}}_{H^2(D)}+\epsilon\abs{a}\big)
        \leq C_0(M+r).
    \end{equation}
    Define the map $\mathcal{A}:X_p\rightarrow X_p$ by
    \begin{equation}
        \mathcal{A}(h) := h_{\mathrm{lin}}
        + \eta \big(Q_j(I-\hat\omega^2\K_D^{\epsilon\hat\omega})\big)^{-1}
        \big(\hat\omega^2 Q_j \K_D^{\epsilon\hat\omega}[\abs{a\phi_j + h}^2 (a\phi_j +h)]\big).
    \end{equation}
    Then $h$ solves equation \eqref{eq:Qprojected-incident-field-equation} if and only if $h = \mathcal{A}(h)$. In the following, consider the closed ball $B(0,R) := \{h\in X_p: \norm{h}_{H^2(D)} \leq R\}$ and the decomposition $u=a\phi_j +h$, for which the following estimate in dimension three holds due to the Sobolev embedding:
    \begin{equation}
        \norm{\abs{u}^2 u}_{L^2(D)} \leq \norm{u}^2_{L^\infty(D)}\norm{u}_{L^2(D)} \leq C \norm{u}^3_{H^2(D)}.
    \end{equation}
    Hence, for some constant $C>0$, if $\abs{a}<r$ and $h\in B(0,R)$, then
    \begin{equation}
        \norm{\abs{a\phi_j +h}^2 (a\phi_j+h)}_{L^2(D)} \leq C(r+R)^3.
    \end{equation}
    Moreover, for $h_1,h_2\in B(0,R)$, set $u_1=a\phi_j+h_1$ and $u_2=a\phi_j+h_2$. Then
    \begin{equation}
        \abs{\abs{u_1}^2u_1-\abs{u_2}^2u_2}
        \leq C(\abs{u_1}^2+\abs{u_2}^2)\abs{u_1-u_2},
    \end{equation}
    and consequently
    \begin{equation}
        \norm{\abs{a\phi_j + h_1}^2 (a\phi_j + h_1) - \abs{a\phi_j+h_2}^2(a\phi_j+h_2)}_{L^2(D)} \leq C (r+R)^2 \norm{h_1 - h_2}_{H^2(D)}.
    \end{equation}
    We can thus choose $R:=\max\{M,C_0(M+r)\}+1$, where $C_0$ is the constant in \eqref{eq:bound-hlin}, so that $\norm{h_{\mathrm{lin}}}_{H^2(D)}\leq R-1$. Then, for $h\in B(0,R)$,
    \begin{equation}
        \norm{\mathcal{A}(h)}_{H^2(D)} \leq \norm{h_{\mathrm{lin}}}_{H^2(D)} + C \abs{\eta} (r+R)^3 \leq R - 1 + C \abs{\eta}(r+R)^3.
    \end{equation}
    If $\eta_0 >0$ is chosen small enough that $C \eta_0 (r+R)^3 \leq 1$, then the map $\mathcal{A}$ maps the closed ball $B(0,R)$ into itself. Finally, given $h_1,h_2 \in B(0,R)$, we have
    \begin{equation}\label{eq:contraction-estimate}
        \norm{\mathcal{A}(h_1) - \mathcal{A}(h_2)}_{H^2(D)} \leq C \abs{\eta}(r+R)^2 \norm{h_1-h_2}_{H^2(D)},
    \end{equation}
    Decreasing $\eta_0$ further, if necessary, so that $C \eta_0 (r+R)^2 \leq \frac{1}{2}$, we find that $\mathcal{A}$ is a contraction on the closed ball. Applying the Banach fixed point theorem gives the unique fixed point $h = W_\eta(a,\hat\omega,\epsilon,u^{\mathrm{inc}}) \in X_p$ with $\norm{h}_{H^2(D)} \leq R$.
    The fixed-point equation and \eqref{eq:bound-hlin} first give
    \begin{equation}
        \norm{W_\eta}_{H^2(D)}
        \leq C\big(\norm{Q_ju^{\mathrm{inc}}}_{H^2(D)}+\abs{a}\big)
        + C\abs{\eta}(r+R)^2\norm{W_\eta}_{H^2(D)}.
    \end{equation}
    Decreasing $\eta_0$ if necessary, we can absorb the last term and obtain
    $\norm{W_\eta}_{H^2(D)}\leq C(\norm{Q_ju^{\mathrm{inc}}}_{H^2(D)}+\abs{a})$. Therefore,
    \begin{equation}
        \norm{W_\eta - h_{\mathrm{lin}}}_{H^2(D)}
        \leq C \abs{\eta}(\abs{a} + \norm{W_\eta}_{H^2(D)})^3
        \leq C \abs{\eta}\big(\norm{Q_ju^{\mathrm{inc}}}_{H^2(D)}+\abs{a}\big)^3.
    \end{equation}
    Define the real-analytic map
    \begin{equation}
        \mathcal{F}(h,a,\hat\omega,\epsilon,\eta,u^{\mathrm{inc}})
        :=h-\mathcal{A}(h),
    \end{equation}
    from $X_p \times \C \times U \times \R \times \R \times H^2(D)$ to $X_p$. The contraction estimate \eqref{eq:contraction-estimate} gives
    $\norm{D_h\mathcal{A}}_{X_p\to X_p}\leq\frac12$, so
    $D_h\mathcal{F}=I-D_h\mathcal{A}$ is invertible by a Neumann-series argument. The implicit function theorem then shows that $W_\eta$ is real analytic in $(a,\hat\omega,\epsilon,\eta,u^{\mathrm{inc}})$.
\end{proof}

\subsection{Amplitude Equation}\label{sec:amplitude-eq}

Using the reduced equation \eqref{eq:G-reduced-functional} from Proposition \ref{prop:local-reduction}, we obtain a scalar nonlinear equation for the resonant coefficient. Consider a simple eigenpair $(\lambda_j,\phi_j)$ of $\K_D$ and, for brevity, write $W:=W_\eta(a,\hat\omega,\epsilon,u^{\mathrm{inc}})$. We first establish two bounds on this remainder.

\begin{lemma}\label{lem:helper-W-bound}
    Under the assumptions of Proposition \ref{prop:local-reduction}, consider the compact neighbourhood $V := \overline{U_{\hat\omega_j}} \times [0,\epsilon_0] \times [0,\eta_{\mathrm{max}}]$, contained in the parameter neighbourhood on which the real-analytic extension of $W_\eta$ is defined. Then
    \begin{equation}
        \lim_{s\rightarrow0^+} \sup_{\substack{(\hat\omega,\epsilon,\eta) \in V \\ \abs{a} + \norm{u^{\mathrm{inc}}}_{H^2(D)} < s}} \norm{W_\eta (a,\hat\omega, \epsilon, u^{\mathrm{inc}})}_{H^2(D)} = 0.
    \end{equation}
\end{lemma}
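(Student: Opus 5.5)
We will show that the remainder vanishes at zero amplitude and zero incident field, and then obtain the uniform limit from the \emph{a priori} fixed-point structure of \eqref{eq:Qprojected-incident-field-equation}, not from analyticity alone.

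The first step is the identity $W_\eta(0,\hat\omega,\epsilon,0)=0$. When $a=0$ and $u^{\mathrm{inc}}=0$, the function $h=0$ solves \eqref{eq:Qprojected-incident-field-equation}, since every term on the right-hand side vanishes. It lies in the ball $\norm{h}_{H^2(D)}\leq R$, so the uniqueness statement in Proposition \ref{prop:local-reduction} forces $W_\eta(0,\hat\omega,\epsilon,0)=0$ for all $(\hat\omega,\epsilon,\eta)\in V$.

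The second step is a quantitative bound. Write $W=W_\eta$ and $L:=Q_j(I-\hat\omega^2\K_D^{\epsilon\hat\omega})|_{X_p}$. By Lemma \ref{lem:complement-K-d-omega}, $\norm{L^{-1}}\leq C$ uniformly on $V$, after shrinking $U$ and $\epsilon_0$ if necessary. Applying $L^{-1}$ to \eqref{eq:Qprojected-incident-field-equation}, and using Remark \ref{rmk:bound-easy-term}, the mapping property $\K_D^{\epsilon\hat\omega}:L^2(D)\to H^2(D)$, and the cubic Sobolev estimate $\norm{\abs{u}^2u}_{L^2(D)}\leq C\norm{u}_{H^2(D)}^3$, we obtain
\begin{equation*}
\norm{W}_{H^2(D)}\leq C\big(\norm{u^{\mathrm{inc}}}_{H^2(D)}+\epsilon\abs{a}\big)+C\eta_{\mathrm{max}}\big(\abs{a}+\norm{W}_{H^2(D)}\big)^3 .
\end{equation*}
Set $s:=\abs{a}+\norm{u^{\mathrm{inc}}}_{H^2(D)}$ and $x:=\norm{W}_{H^2(D)}$. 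This reads $x\leq Cs+C'(s+x)^3$, with constants uniform on $V$.

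The main obstacle is that this inequality alone does not force $x$ to be small. It also allows a ``large'' branch with $x$ of order $(C')^{-1/2}$, and the only information we have \emph{a priori} is $x\leq R$. I would handle this with a continuity argument in $s$. Fix $(\hat\omega,\epsilon,\eta)$, $a$ and $u^{\mathrm{inc}}$, and consider the path $t\mapsto W_\eta(ta,\hat\omega,\epsilon,tu^{\mathrm{inc}})$ for $t\in[0,1]$. This path is continuous by the analyticity stated in Proposition \ref{prop:local-reduction}, and it starts at $0$ by the first step. Choose $x_*>0$ so small that $C'(2x_*)^3\leq x_*/4$. Then, for $s\leq\min(x_*,x_*/(4C))$, the inequality together with $x\leq x_*$ gives $x\leq x_*/2$. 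The interval $\{x\leq x_*\}$ is therefore closed under the bound, so by continuity the path can never cross the gap $(x_*/2,x_*]$, and hence $x\leq 2Cs+O(s^3)$ along the entire path.

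Because all constants are uniform over the compact set $V$, this gives
\begin{equation*}
\sup\norm{W_\eta}_{H^2(D)}\leq C''s\to0 \quad\text{as } s\to0^+ ,
\end{equation*}
which is the claim.

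An alternative route would use that the implicit function theorem in Proposition \ref{prop:local-reduction} already provides $W$ as the small-norm solution, defined on a common neighbourhood that is uniform in $\eta$. One could then take a uniform modulus of continuity on the compact set $V$, which makes the path argument immediate.
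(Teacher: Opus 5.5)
Your proof is correct, but it gets uniformity over $V$ by a different mechanism than the paper. Both arguments start from $W_\eta(0,\hat\omega,\epsilon,0)=0$ via local uniqueness. Both also rely on continuity of $W_\eta$ to exclude the spurious large branch of $x\le Cs+C'(s+x)^3$, which you correctly identify as the real issue.

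The paper then argues softly. Joint continuity of $W$ at each point $(0,y,0)$ with $y\in V$ gives a neighbourhood $V_y$ and a radius $C_y$ on which $\|W_\eta\|_{H^2(D)}<\rho$. A finite subcover of the compact set $V$ then makes the radius uniform.

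You instead derive the a priori inequality from the projected equation, with constants uniform in $(\hat\omega,\epsilon,\eta)$ by Lemma \ref{lem:complement-K-d-omega} and the cubic Sobolev estimate. You then run a bootstrap along the rays $t\mapsto(ta,tu^{\mathrm{inc}})$. This uses only continuity in $t$ for fixed $(\hat\omega,\epsilon,\eta)$, not joint continuity in all parameters. The uniformity comes from the operator bounds rather than from a covering argument.

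Your route buys a quantitative rate $\sup\|W_\eta\|_{H^2(D)}\le C''s$ instead of mere $o(1)$. Moreover, if you keep $\|Q_ju^{\mathrm{inc}}\|_{H^2(D)}$, $\epsilon|a|$ and $\eta|a|^3$ separate, the same bootstrap yields Lemma \ref{lem:uniform-W-bound} directly. You would not need to first invoke the present lemma to absorb the cubic term.

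The paper's route is shorter and needs no estimates. However, it uses joint continuity of $W$ at every point of $\{0\}\times V\times\{0\}$ and gives no rate.
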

\begin{proof}
    The construction in the proof of Proposition \ref{prop:local-reduction} shows that $W_\eta(a,\hat\omega,\epsilon,u^{\mathrm{inc}})$ is jointly continuous on a neighbourhood of each point $(0,\hat\omega,\epsilon,0,\eta)$ with $(\hat\omega,\epsilon,\eta)\in V$. Moreover,
    \begin{equation}\label{eq:trivial-h}
        W_\eta(0,\hat\omega,\epsilon,0) = 0,
        \qquad \text{for every }(\hat\omega,\epsilon,\eta)\in V,
    \end{equation}
    because $h=0$ solves \eqref{eq:Qprojected-incident-field-equation} when $a=0$ and $u^{\mathrm{inc}}=0$; local uniqueness then gives \eqref{eq:trivial-h}.
    Fix $\rho>0$. For every $y\in V$, joint continuity and \eqref{eq:trivial-h} provide a constant $C_y>0$ and a neighbourhood $V_y$ of $y$ in $V$ such that
    \begin{equation}
        \norm{W_{\eta'}(a,\hat\omega',\epsilon',u^{\mathrm{inc}})}_{H^2(D)}<\rho
    \end{equation}
    whenever $(\hat\omega',\epsilon',\eta')\in V_y$ and
    $\abs{a}+\norm{u^{\mathrm{inc}}}_{H^2(D)}<C_y$.
    The sets $V_y$ cover the compact set $V$. Choose a finite subcover
    $V_{y_1},\ldots,V_{y_N}$ and set
    $C_\rho:=\min_{1\leq i\leq N}C_{y_i}>0$.
    The desired bound then holds uniformly over $V$.
\end{proof}

\begin{lemma}\label{lem:uniform-W-bound}
    Under the assumptions of Proposition \ref{prop:local-reduction}, there exists a constant $C_W > 0$ such that
    \begin{equation}
        \norm{W}_{H^2(D)} \leq C_W \big( \norm{Q_j u^{\mathrm{inc}}}_{H^2(D)} + \epsilon \abs{a} + \eta \abs{a}^3 \big),
    \end{equation}
    for every tuple $(a,\hat\omega, \epsilon,u^{\mathrm{inc}},\eta)$ satisfying the parameter constraints from Proposition \ref{prop:local-reduction}. In particular, the constant is uniform for $\hat\omega \in U_{\hat\omega_j}$, $0<\epsilon< \epsilon_0$, and $0 < \eta < \eta_{\mathrm{max}}$.
\end{lemma}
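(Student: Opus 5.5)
The plan is to rewrite the $Q_j$-projected equation \eqref{eq:Qprojected-incident-field-equation} as a fixed-point identity for $W$, estimate each term on the right-hand side, and then absorb the terms that are superlinear in $W$ into the left-hand side using the smallness from Lemma~\ref{lem:helper-W-bound}.

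\textbf{Fixed-point identity.} Lemma~\ref{lem:complement-K-d-omega} (after shrinking $U_{\hat\omega_j}$ and $\epsilon_0$ if needed) lets us invert $Q_j(I-\hat\omega^2\K_D^{\epsilon\hat\omega})$ on $X_p$ with a uniform bound $2C_0$. Applying this inverse to \eqref{eq:Qprojected-incident-field-equation} gives
\begin{equation*}
W = \big(Q_j(I-\hat\omega^2\K_D^{\epsilon\hat\omega})\big)^{-1}\Big(Q_ju^{\mathrm{inc}} + \hat\omega^2 Q_j\K_D^{\epsilon\hat\omega}[a\phi_j] + \eta\hat\omega^2 Q_j\K_D^{\epsilon\hat\omega}\big[\abs{a\phi_j+W}^2(a\phi_j+W)\big]\Big).
\end{equation*}

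\textbf{Term-by-term bounds.}
\begin{itemize}
\item The first term is bounded by $C\norm{Q_ju^{\mathrm{inc}}}_{H^2(D)}$.
\item The second is bounded by $C\epsilon\abs{a}$, by Remark~\ref{rmk:bound-easy-term}.
\item For the cubic term, $\K_D^{\epsilon\hat\omega}:L^2(D)\to H^2(D)$ is uniformly bounded for $\hat\omega\in U$ and $\epsilon\le\epsilon_0$. The Sobolev embedding $H^2(D)\hookrightarrow L^\infty(D)$ in three dimensions gives $\norm{\abs{u}^2u}_{L^2(D)}\le C\norm{u}_{H^2(D)}^3$. Expanding $(\abs{a}+\norm{W}_{H^2(D)})^3$ then yields
\begin{equation*}
\norm{W}_{H^2(D)}\le C\Big(\norm{Q_ju^{\mathrm{inc}}}_{H^2(D)}+\epsilon\abs{a}+\eta\abs{a}^3 + \eta\big(\abs{a}^2+\norm{W}_{H^2(D)}^2\big)\norm{W}_{H^2(D)}\Big),
\end{equation*}
where the mixed terms $\abs{a}^2\norm{W}$ and $\abs{a}\norm{W}^2$ have been grouped into the last bracket, using $\abs{a}\norm{W}^2\le(\abs{a}^2+\norm{W}^2)\norm{W}$.
\end{itemize}

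\textbf{Absorption (the main obstacle).} The difficulty is that $\eta$ is only bounded by $\eta_{\mathrm{max}}$, not small, so the contraction argument of Proposition~\ref{prop:alternative-regime-local-reduction} cannot be reused. Instead, the smallness must come from $\abs{a}$ and $\norm{W}$ themselves.
\begin{itemize}
\item By Lemma~\ref{lem:helper-W-bound}, shrink $r$ so that whenever $\abs{a}+\norm{u^{\mathrm{inc}}}_{H^2(D)}\le r$ we have, uniformly over $V$, $C\eta_{\mathrm{max}}(r^2+\norm{W}_{H^2(D)}^2)\le\tfrac12$.
\item The last term is then at most $\tfrac12\norm{W}_{H^2(D)}$ and can be moved to the left-hand side.
\item This gives the claimed bound with $C_W=2C$.
\end{itemize}

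\textbf{Uniformity of the constants.} Every constant used comes from one of the following:
\begin{itemize}
\item Lemma~\ref{lem:complement-K-d-omega};
\item estimate \eqref{eq:prelim-bounding-diff-newtonian-potential} on the bounded set $U$;
\item the Sobolev embedding;
\item the bound $\eta\le\eta_{\mathrm{max}}$.
\end{itemize}
Hence $C_W$ is uniform in $\hat\omega$, $\epsilon$ and $\eta$. The one price is that $r$ from Proposition~\ref{prop:local-reduction} may need to be reduced, which is harmless since that proposition only requires $r$ to be sufficiently small.
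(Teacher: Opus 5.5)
Your proposal is correct and follows essentially the same route as the paper: invert $Q_j(I-\hat\omega^2\K_D^{\epsilon\hat\omega})$ uniformly via Lemma~\ref{lem:complement-K-d-omega}, bound the three source terms using Remark~\ref{rmk:bound-easy-term} and the Sobolev embedding, then absorb the superlinear part by shrinking $r$ through Lemma~\ref{lem:helper-W-bound}. The only difference is cosmetic. The paper uses $(x+y)^3\le 4x^3+4y^3$, so only the $\eta\norm{W}_{H^2(D)}^3$ term has to be absorbed and only smallness of $\norm{W}_{H^2(D)}$ is needed. You keep the mixed terms, so you also use smallness of $\abs{a}$, which is equally available once $r$ is reduced.
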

\begin{proof}
    Consider the neighbourhood $U_{\hat\omega_j} := \{\hat\omega \in \C : \abs{\hat\omega - \hat\omega_j} < \delta \}$ for sufficiently small $\delta>0$. By Lemma \ref{lem:complement-K-d-omega}, there exists a constant $C_1>0$ such that
    \begin{equation}\label{eq:bound1-proofW}
        \sup_{\substack{\hat\omega \in U_{\hat\omega_j} \\ 0<\epsilon\leq \epsilon_0}}\norm{(Q_j (I - \hat\omega^2 \K_D^{\epsilon\hat\omega}))^{-1}}_{X_p \rightarrow X_p} \leq C_1.
    \end{equation}
    Since $U_{\hat\omega_j}$ is bounded, the operator expansion from \eqref{eq:prelim-K-volume-operator-expansion} gives
    \begin{equation}\label{eq:bound2-proofW}
        \sup_{\substack{\hat\omega \in U_{\hat\omega_j} \\ 0<\epsilon\leq \epsilon_0}}\norm{\hat\omega^2 Q_j \K_D^{\epsilon\hat\omega}}_{L^2(D) \rightarrow H^2(D)} \leq C_2,
    \end{equation}
    for some constant $C_2>0$. Finally, using \eqref{eq:prelim-bounding-diff-newtonian-potential} and Remark \ref{rmk:bound-easy-term}, we obtain
    \begin{equation}\label{eq:bound3-proofW}
        \norm{\hat\omega^2 Q_j \K_D^{\epsilon\hat\omega}[a\phi_j]}_{H^2(D)} = \abs{a} \norm{\hat\omega^2 Q_j(\K_D^{\epsilon\hat\omega} - \K_D)[\phi_j]}_{H^2(D)} \leq C_3 \epsilon \abs{a},
    \end{equation}
    for some positive constant $C_3 >0$.
    Now consider the projected equation \eqref{eq:Qprojected-incident-field-equation}, written as
    \begin{equation}\label{eq:Qj-proj-rewritten}
        Q_j(I-\hat\omega^2\K_D^{\epsilon\hat\omega})[W] = Q_j u^{\mathrm{inc}} + \hat\omega^2 Q_j\K_D^{\epsilon\hat\omega}[a \phi_j] +\eta\hat\omega^2Q_j \K_D^{\epsilon\hat\omega}[ \abs{a\phi_j + W}^2 (a\phi_j +W)].
    \end{equation}
    We apply the inverse $(Q_j(I-\hat\omega^2\K_D^{\epsilon\hat\omega}))^{-1}$ to \eqref{eq:Qj-proj-rewritten}, and using the bounds from \eqref{eq:bound1-proofW}, \eqref{eq:bound2-proofW}, and \eqref{eq:bound3-proofW} we obtain
    \begin{equation}\label{bound-w}
        \norm{W}_{H^2(D)} \leq C ( \norm{Q_j u^{\mathrm{inc}}}_{H^2(D)} + \epsilon \abs{a} + \eta (\abs{a}+\norm{W}_{H^2(D)})^3 ).
    \end{equation}
    We use the elementary inequality $(x+y)^3 \leq 4x^3 + 4y^3$ for nonnegative $x,y \in \R$ to rewrite \eqref{bound-w} as
    \begin{equation}\label{eq:rewritten-w-bound}
        \norm{W}_{H^2(D)} \leq C ( \norm{Q_j u^{\mathrm{inc}}}_{H^2(D)} + \epsilon \abs{a}) + 4C\eta \abs{a}^3 + 4C\eta \norm{W}_{H^2(D)}^3.
    \end{equation}
    Choose $\rho>0$ sufficiently small that $4C\eta_{\mathrm{max}}\rho^2\leq \frac{1}{2}$. By Lemma \ref{lem:helper-W-bound}, we may then decrease the radius $r$ in Proposition \ref{prop:local-reduction} so that $\norm{W}_{H^2(D)}<\rho$ for all admissible parameter tuples. We can therefore absorb the final term in \eqref{eq:rewritten-w-bound} to obtain
    \begin{equation}
        \norm{W}_{H^2(D)} \leq 2C ( \norm{Q_j u^{\mathrm{inc}}}_{H^2(D)} + \epsilon \abs{a}) + 8C\eta \abs{a}^3,
    \end{equation}
    which concludes the proof.
\end{proof}

We now consider the regime of Proposition \ref{prop:local-reduction}: the nonlinearity is bounded by $\eta_{\mathrm{max}}$, while the amplitude and incident field satisfy $\abs{a}+\norm{u^\mathrm{inc}}_{H^2(D)}\leq r$. We use the representation
\begin{equation}
    u = a\phi_j + W,
\end{equation}
in the scalar equation $G_j(a,\hat\omega,\epsilon,\eta,u^{\mathrm{inc}})=0$ to obtain
\begin{equation}\label{eq:leading-order-reduced-equation}
    \begin{aligned}
        0={}&a
        -\innerproduct{\phi_j}{u^{\mathrm{inc}}}_D
        -\hat\omega^2\innerproduct{\phi_j}{\K_D^{\epsilon\hat\omega}[a\phi_j + W]}_D \\
        &-\eta \hat\omega^2
        \innerproduct{\phi_j}{\K_D^{\epsilon\hat\omega}[\abs{a\phi_j+W}^2(a\phi_j+W)]}_D.
    \end{aligned}
\end{equation}
Using the linearity of $\K_D^{\epsilon\hat\omega}$, we separate the terms that do not contain $W$ as follows:
\begin{equation}\label{eq:unfolded-Pj-equation}
\begin{aligned}
    &a(1 - \hat\omega^2\innerproduct{\phi_j}{\K_D^{\epsilon\hat\omega}[\phi_j]}_D) - \innerproduct{\phi_j}{u^{\mathrm{inc}}}_D - \eta \abs{a}^2 a \hat\omega^2\innerproduct{\phi_j}{\K_D^{\epsilon\hat\omega}[\abs{\phi_j}^2 \phi_j]}_D \\
    &= \hat\omega^2\innerproduct{\phi_j}{\K_D^{\epsilon\hat\omega}[W]}_D + \eta\hat\omega^2\innerproduct{\phi_j}{\K_D^{\epsilon\hat\omega}[\abs{a\phi_j+W}^2(a\phi_j+W) - \abs{a}^2 a \abs{\phi_j}^2 \phi_j]}_D.
    \end{aligned}
\end{equation}
Let us consider the right-hand side of equation \eqref{eq:unfolded-Pj-equation}, which we can bound in this regime by the following lemma.
\begin{lemma}
    In the local regime of Proposition \ref{prop:local-reduction}, define
    \begin{equation}\label{eq:def-rest-term-W}
        \mathcal{R}_W := \hat\omega^2\innerproduct{\phi_j}{\K_D^{\epsilon\hat\omega}[W]}_D + \eta\hat\omega^2\innerproduct{\phi_j}{\K_D^{\epsilon\hat\omega}[\abs{a\phi_j+W}^2(a\phi_j+W) - \abs{a}^2 a \abs{\phi_j}^2 \phi_j]}_D.
    \end{equation}
    Then
\begin{equation}\label{eq:bound-rest-w}
    \abs{\mathcal{R}_W} \leq C \Big( \epsilon\norm{W}_{H^2(D)}
    + \abs{\eta}\big( \abs{a}^2\norm{W}_{H^2(D)}
    + \abs{a}\norm{W}_{H^2(D)}^2 + \norm{W}_{H^2(D)}^3\big)\Big),
\end{equation}
for some constant $C>0$.
\end{lemma}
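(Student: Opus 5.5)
We bound the two pieces of $\mathcal{R}_W$ in \eqref{eq:def-rest-term-W} separately, using self-adjointness of the Newtonian potential, the orthogonality $\innerproduct{\phi_j}{W}_D=0$, and the cubic Sobolev estimates from the proof of Proposition \ref{prop:alternative-regime-local-reduction}.

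\emph{Linear term.} For the first term we split $\K_D^{\epsilon\hat\omega} = \K_D + (\K_D^{\epsilon\hat\omega}-\K_D)$. The self-adjointness of $\K_D$ on $L^2(D)$ and $\K_D\phi_j=\lambda_j\phi_j$ give
\begin{equation*}
\innerproduct{\phi_j}{\K_D[W]}_D=\innerproduct{\K_D\phi_j}{W}_D=\lambda_j\innerproduct{\phi_j}{W}_D=0,
\end{equation*}
since $W\in X_p$. We then apply Cauchy--Schwarz, $\norm{\phi_j}_{L^2(D)}=1$, the embedding $H^2(D)\hookrightarrow L^2(D)$, and \eqref{eq:prelim-bounding-diff-newtonian-potential} on the bounded set $U_{\hat\omega_j}$. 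This yields
\begin{equation*}
\abs{\hat\omega^2\innerproduct{\phi_j}{\K_D^{\epsilon\hat\omega}[W]}_D}\leq \sup_{U}\abs{\hat\omega}^2\,\norm{\K_D^{\epsilon\hat\omega}-\K_D}_{L^2\to H^2}\norm{W}_{L^2(D)}\leq C\epsilon\norm{W}_{H^2(D)}.
\end{equation*}
This is the step that produces the crucial factor $\epsilon$; without the orthogonality one would only get $O(\norm{W})$.

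\emph{Nonlinear term.} The operator $\hat\omega^2\K_D^{\epsilon\hat\omega}$ is bounded $L^2(D)\to H^2(D)\hookrightarrow L^2(D)$ uniformly for $\hat\omega\in U$ and $\epsilon\le\epsilon_0$, by \eqref{eq:bound2-proofW}. It therefore suffices to bound the difference $\abs{a\phi_j+W}^2(a\phi_j+W)-\abs{a\phi_j}^2a\phi_j$ in $L^2(D)$. We expand the cubic map $N(v)=\abs{v}^2v = v^2\bar v$ pointwise with $v=a\phi_j$ and perturbation $W$. Every term left after removing $N(a\phi_j)$ contains at least one factor $W$ or $\bar W$, and is of the form (products of two factors from $\{a\phi_j,\overline{a\phi_j}\}$)$\times W$, (one such factor)$\times W^2$-type, or $W^3$-type. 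Using $\norm{fgh}_{L^2}\le \norm{f}_{L^\infty}\norm{g}_{L^\infty}\norm{h}_{L^2}$ and the Sobolev embedding $H^2(D)\hookrightarrow L^\infty(D)$ in three dimensions (valid since $\partial D\in C^{1,1}$), together with $\phi_j\in H^2(D)$ of fixed norm, we get
\begin{equation*}
\norm{N(a\phi_j+W)-N(a\phi_j)}_{L^2(D)}\leq C\big(\abs{a}^2\norm{W}_{H^2}+\abs{a}\norm{W}_{H^2}^2+\norm{W}_{H^2}^3\big).
\end{equation*}
Multiplying by $\abs{\eta}$ and adding the linear estimate gives \eqref{eq:bound-rest-w}.

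The only genuinely delicate point is the first term, where the factor $\epsilon$ must be extracted through the orthogonality $\innerproduct{\phi_j}{W}_D=0$ combined with self-adjointness of $\K_D$. The nonlinear part is a routine trilinear estimate. We also check that all constants are uniform over the parameter neighbourhood of Proposition \ref{prop:local-reduction}, which holds because $U$ is bounded and $\epsilon\le\epsilon_0$.
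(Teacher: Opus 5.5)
Your proposal is correct and follows the paper's proof. The linear term is handled exactly as in the paper, via $\innerproduct{\phi_j}{\K_D[W]}_D=0$ and \eqref{eq:prelim-bounding-diff-newtonian-potential}. For the cubic difference, you expand $v^2\bar v$ term by term, whereas the paper uses the pointwise bound $\abs{\abs{z}^2z-\abs{w}^2w}\le\frac32(\abs{z}^2+\abs{w}^2)\abs{z-w}$ and then expands $(\abs{a}+\norm{W}_{H^2(D)})^2\norm{W}_{H^2(D)}$; the two give the same estimate. One small citation point: you need the uniform bound on $\hat\omega^2\K_D^{\epsilon\hat\omega}$ itself rather than on $\hat\omega^2Q_j\K_D^{\epsilon\hat\omega}$ as stated in \eqref{eq:bound2-proofW}, but it follows from the same expansion.
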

\begin{proof}
    For the first term, we use self-adjointness of $\K_D$ together with the fact that $W \in X_p$ to observe that
    \begin{equation}\label{eq:vanishing-W-term}
        \innerproduct{\phi_j}{\K_D[W]}_D = \innerproduct{\K_D[\phi_j]}{W}_D = \lambda_j \innerproduct{\phi_j}{W}_D = 0.
    \end{equation}
    Combining \eqref{eq:vanishing-W-term} with the bound from \eqref{eq:prelim-bounding-diff-newtonian-potential}, we obtain
    \begin{equation}
    \begin{aligned}
        \abs{\hat\omega^2\innerproduct{\phi_j}{\K_D^{\epsilon\hat\omega}[W]}_D}
        &= \abs{\hat\omega^2\innerproduct{\phi_j}{(\K_D^{\epsilon\hat\omega} - \K_D)[W]}_D} \\
        &\leq \abs{\hat\omega}^2 \norm{\phi_j}_{L^2(D)}
        \norm{(\K_D^{\epsilon\hat\omega} - \K_D)[W]}_{L^2(D)} \\
        &\leq C \epsilon \norm{W}_{H^2(D)},
    \end{aligned}
    \end{equation}
    where the first inequality is Cauchy--Schwarz and the last follows from \eqref{eq:prelim-bounding-diff-newtonian-potential}, with the bounded factor $\abs{\hat\omega}^2$ absorbed into $C$. For the second term in \eqref{eq:def-rest-term-W}, we use the following inequality for $z,w \in \C$:
    \begin{equation}\label{eq:helper-inequality-nonlinterm}
        \abs{\abs{z}^2z-\abs{w}^2w}
        \leq \frac{3}{2}(\abs{z}^2+\abs{w}^2)\abs{z-w},
    \end{equation}
    which follows directly from algebra and the triangle inequality. For $x\in D$, set $z:=a\phi_j(x)+W(x)$ and $w:=a\phi_j(x)$. These pointwise evaluations are well defined because $H^2(D)\hookrightarrow L^\infty(D)$ in three dimensions. We then obtain
    \begin{equation}
     \begin{aligned}   \norm{\abs{a\phi_j+W}^2(a\phi_j+W) - \abs{a}^2 a \abs{\phi_j}^2 \phi_j}_{L^2(D)} &\leq \frac{3}{2} \norm{\big( \abs{a\phi_j + W}^2 + \abs{a\phi_j}^2 \big) \abs{a\phi_j + W - a\phi_j}}_{L^2(D)} \\
     & \leq \frac{3}{2} \big( \norm{\abs{a\phi_j +W}^2 + \abs{a\phi_j}^2}_{L^\infty(D)} \big) \norm{W}_{L^2(D)} \\
     & \leq \frac{3}{2}C (\abs{a} + \norm{W}_{H^2(D)})^2 \norm{W}_{H^2(D)},
     \end{aligned}
    \end{equation}
    for some constant $C>0$. The first inequality follows from \eqref{eq:helper-inequality-nonlinterm}; the remaining estimates use the Sobolev embedding and the fact that $\phi_j\in H^2(D)$. Since the eigenpair $(\lambda_j,\phi_j)$ is fixed throughout, its $H^2(D)$ norm is absorbed into $C$. Combining this estimate with the uniform bound on $\hat\omega^2\K_D^{\epsilon\hat\omega}$ and expanding $(\abs{a}+\norm{W}_{H^2(D)})^2\norm{W}_{H^2(D)}$ yields the claimed bound on $\mathcal{R}_W$.
\end{proof}

Neglecting the remainder gives the following leading-order amplitude equation:
\begin{equation}\label{eq:leading-order-amplitude-eq}
    \left(1-\hat\omega^2 \innerproduct{\phi_j}{\K_D^{\epsilon\hat\omega}[\phi_j]}_D
    -\eta\hat\omega^2 \abs{a}^2
    \innerproduct{\phi_j}{\K_D^{\epsilon\hat\omega}[\abs{\phi_j}^2\phi_j]}_D\right)a
    \approx \innerproduct{\phi_j}{u^{\mathrm{inc}}}_D.
\end{equation}
In the linear case $\eta=0$, we recover the same asymptotic behaviour of the amplitude coefficient as in \cite{ammari_linear_helmholtz_2019}. To analyze the implications of \eqref{eq:leading-order-amplitude-eq}, let $I := \abs{a}^2$ denote the intensity and set
\begin{equation}\label{eq:notation-amplitude-equation}
    K:=1-\hat\omega^2\innerproduct{\phi_j}{\K_D^{\epsilon\hat\omega}[\phi_j]}_D,\qquad
    M:=\hat\omega^2\innerproduct{\phi_j}{\K_D^{\epsilon\hat\omega}[\abs{\phi_j}^2\phi_j]}_D,\qquad
    f:=\innerproduct{\phi_j}{u^{\mathrm{inc}}}_D.
\end{equation}
After neglecting $\mathcal{R}_W$, equation \eqref{eq:leading-order-reduced-equation} becomes
\begin{equation}\label{eq:simplified-amplitude-relation}
    (K - \eta M \abs{a}^2)a = f.
\end{equation}
Taking the squared absolute value gives
\begin{equation}\label{eq:P-intensity-relation}
    P(I):=I \abs{K - \eta M I}^2 = \abs{f}^2.
\end{equation}
If $\abs{f}>0$ and $I>0$ solves \eqref{eq:P-intensity-relation}, then $K-\eta MI\neq0$ and
$a=f/(K-\eta MI)$ solves \eqref{eq:simplified-amplitude-relation}. Hence, for $\abs{f}>0$, the positive roots of \eqref{eq:P-intensity-relation} are in one-to-one correspondence with the complex solutions of \eqref{eq:simplified-amplitude-relation}.

Expanding $P$ gives
\begin{equation}
    P(I) = \abs{\eta M}^2 I^3 - 2 \Re\big(K\overline{\eta M}\big)I^2 + \abs{K}^2 I.
\end{equation}
The derivative $P'$ has two distinct positive roots if and only if
\begin{equation}\label{eq:conditions-distinct-roots}
    \Re\big(K\overline{\eta M}\big) > 0,
    \qquad
    4 \Re\big(K\overline{\eta M}\big)^2 > 3 \abs{\eta M}^2 \abs{K}^2.
\end{equation}
In that case, the critical points are
\begin{equation}
    I_{\pm} =
    \frac{2 \Re\big(K\overline{\eta M}\big)
    \pm \sqrt{4 \Re\big(K\overline{\eta M}\big)^2
    - 3 \abs{\eta M}^2\abs{K}^2}}
    {3 \abs{\eta M}^2},
\end{equation}
where $I_-$ is a local maximum and $I_+$ is a local minimum of $P$. Thus, for fixed $(\hat\omega,\epsilon,\eta,u^{\mathrm{inc}})$ with $\abs{f}>0$, equation \eqref{eq:P-intensity-relation} has three positive roots precisely when
\begin{equation}
    P(I_+)<\abs{f}^2<P(I_-).
\end{equation}
At either endpoint, one root has multiplicity two. There is exactly one positive root when $\abs{f}^2<P(I_+)$ or $\abs{f}^2>P(I_-)$. Consequently, the leading-order input--output relation between $f$ and $a$ can be multivalued.

Finally, we derive the corresponding representation of the scattered field. In the local regime of Proposition \ref{prop:local-reduction}, substitute $u=a\phi_j+W$ into \eqref{eq:scattered-field-equation} and define
\begin{equation}\label{eq:def-exterior-terms}
\begin{aligned}
    &\Phi_j^{\epsilon\hat\omega}(x) := \hat\omega^2\int_D {G^{\epsilon\hat\omega}(x-y)\phi_j(y)\dd y}, \qquad x \in \R^3 \setminus \overline{D}, \\
    &\mathcal{R}_W^{\mathrm{sc}}(x) := \hat\omega^2 \int_D G^{\epsilon\hat\omega}(x-y)
    \big[W(y) + \eta \abs{a\phi_j(y) + W(y)}^2(a\phi_j(y) + W(y))\big]\dd y,
    \qquad x \in \R^3 \setminus \overline{D}.
    \end{aligned}
\end{equation}
Then
\begin{equation}\label{eq:representation-of-u-scattered}
    \begin{aligned}
    u^{\mathrm{sc}}(x)
    ={}&a \Phi_j^{\epsilon\hat\omega}(x)
    + \mathcal{R}_W^{\mathrm{sc}}(x), \qquad x \in \R^3 \setminus \overline{D}.
    \end{aligned}
\end{equation}
The remainder includes a nonlinear term that does not contain $W$. The following proposition isolates the $O(\abs{a})$ resonant contribution and bounds the remainder in the local regime. It is one of our main contributions in this paper. 
\begin{proposition}\label{prop:scattered-final-prop}
    Let the assumptions and constraints of Proposition \ref{prop:local-reduction} hold, and consider any solution $u=a\phi_j+W_\eta(a,\hat\omega,\epsilon,u^{\mathrm{inc}})$ satisfying $G_j(a,\hat\omega,\epsilon,\eta,u^{\mathrm{inc}})=0$. Then, for every bounded domain $\Omega\Subset\R^3\setminus\overline{D}$, there exists a constant $C_\Omega>0$ such that
    \begin{equation}
        \norm{u^{\mathrm{sc}} - a\Phi_j^{\epsilon\hat\omega}}_{H^2(\Omega)}
        \leq C_{\Omega}\big(\norm{Q_ju^{\mathrm{inc}}}_{H^2(D)} + \epsilon \abs{a} + \eta\abs{a}^3\big).
    \end{equation}
\end{proposition}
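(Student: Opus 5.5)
By \eqref{eq:representation-of-u-scattered}, $u^{\mathrm{sc}} - a\Phi_j^{\epsilon\hat\omega} = \mathcal{R}_W^{\mathrm{sc}}$ on $\R^3\setminus\overline{D}$. The proof therefore reduces to bounding $\norm{\mathcal{R}_W^{\mathrm{sc}}}_{H^2(\Omega)}$ by the norm in $L^2(D)$ of the density
\[
\psi := W + \eta\abs{a\phi_j+W}^2(a\phi_j+W),
\]
and then invoking Lemma \ref{lem:uniform-W-bound}.

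\emph{Step 1: exterior mapping property.} I will show that $\psi\mapsto \hat\omega^2\int_D G^{\epsilon\hat\omega}(\cdot-y)\psi(y)\dd y$ is bounded $L^2(D)\to H^2(\Omega)$, uniformly for $\hat\omega\in U$ and $0<\epsilon\le\epsilon_0$. Since $\Omega\Subset\R^3\setminus\overline D$, we have $d:=\operatorname{dist}(\Omega,D)>0$. The kernel $G^{\epsilon\hat\omega}(x-y)$ is smooth on $\{|x-y|\ge d\}$. Because $\abs{\epsilon\hat\omega}$ is bounded and $|x-y|\le \operatorname{diam}(\Omega\cup D)$, its derivatives up to order two in $x$ are bounded uniformly by a constant depending only on $\Omega$, $D$ and $U$. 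Note that $\operatorname{Im}\hat\omega$ may be negative, but the factor $e^{\iu\epsilon\hat\omega|x-y|}$ is still bounded on this bounded set. Differentiating under the integral sign and applying Cauchy--Schwarz then gives
\[
\norm{\partial^\alpha_x \textstyle\int_D G^{\epsilon\hat\omega}(x-y)\psi(y)\dd y}_{L^\infty(\Omega)}\le C\norm{\psi}_{L^2(D)}, \qquad |\alpha|\le 2,
\]
and hence, since $\Omega$ is bounded, $\norm{\cdot}_{H^2(\Omega)}\le C_\Omega\norm{\psi}_{L^2(D)}$.

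\emph{Step 2: bound the density.} The linear part satisfies $\norm{W}_{L^2(D)}\le \norm{W}_{H^2(D)}$. For the cubic part, the embedding $H^2(D)\hookrightarrow L^\infty(D)$ gives
\[
\eta\norm{\abs{a\phi_j+W}^2(a\phi_j+W)}_{L^2(D)}\le C\eta(\abs{a}+\norm{W}_{H^2(D)})^3 \le 4C\eta(\abs{a}^3+\norm{W}^3_{H^2(D)}).
\]
By Lemma \ref{lem:helper-W-bound}, after shrinking $r$ as in the proof of Lemma \ref{lem:uniform-W-bound}, we may assume $\norm{W}_{H^2(D)}\le\rho\le1$. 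Then $\eta\norm{W}^3_{H^2(D)}\le \eta_{\max}\norm{W}_{H^2(D)}$, and so
\[
\norm{\psi}_{L^2(D)}\le C\big(\norm{W}_{H^2(D)}+\eta\abs{a}^3\big).
\]

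\emph{Step 3: conclude.} Inserting Lemma \ref{lem:uniform-W-bound} yields
\[
\norm{\psi}_{L^2(D)}\le C\big(\norm{Q_ju^{\mathrm{inc}}}_{H^2(D)}+\epsilon\abs{a}+\eta\abs{a}^3\big),
\]
and Step 1 then finishes the proof. The point needing most care is the term $\eta\abs{a}^3\abs{\phi_j}^2\phi_j$: it contains no $W$, and it is exactly why the $\eta\abs{a}^3$ term appears on the right-hand side. I also expect Step 1 to be the main technical obstacle, because the $H^2(\Omega)$ estimate must be uniform over complex $\hat\omega\in U$ and all $\epsilon\le\epsilon_0$. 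The plan there is to use the positive separation $d$ together with the analyticity of $G^{\omega}$ in $\omega$ on compact sets away from the diagonal.
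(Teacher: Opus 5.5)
Your proposal is correct and follows the paper's proof. You identify $u^{\mathrm{sc}}-a\Phi_j^{\epsilon\hat\omega}$ with $\mathcal{R}_W^{\mathrm{sc}}$, use the positive separation of $\Omega$ from $D$ to bound the exterior potential from $L^2(D)$ to $H^2(\Omega)$, and control the density via the Sobolev embedding together with Lemmas \ref{lem:helper-W-bound} and \ref{lem:uniform-W-bound}. The only differences are that you make explicit the uniformity of the kernel bound over complex $\hat\omega\in U$ and $0<\epsilon\le\epsilon_0$, which the paper leaves implicit, and that you absorb the $\eta\norm{W}_{H^2(D)}^3$ term before rather than after substituting Lemma \ref{lem:uniform-W-bound}, which is an inessential reordering.
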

\begin{proof}
    Since $\Omega$ is compactly contained in $\R^3\setminus\overline{D}$, there exists a constant $C(\Omega)>0$ such that
    \begin{equation}\label{eq:rest-scattered-bound}
        \norm{\mathcal{R}_W^{\mathrm{sc}}}_{H^2(\Omega)}
        \leq C(\Omega)\norm{W+\eta\abs{a\phi_j+W}^2(a\phi_j+W)}_{L^2(D)}.
    \end{equation}
    Applying standard estimates to the right-hand side of \eqref{eq:rest-scattered-bound}, together with Lemma \ref{lem:uniform-W-bound}, gives
    \begin{equation}
    \begin{aligned}
        C(\Omega) \norm{W+\eta\abs{a\phi_j+W}^2(a\phi_j+W)}_{L^2(D)} &\leq C(\Omega) \big( \norm{W}_{H^2(D)} + \eta( \abs{a} + \norm{W}_{H^2(D)} )^3 \big)  \\
        &\leq C(\Omega)\big( \norm{Q_j u^{\mathrm{inc}}}_{H^2(D)} + \epsilon \abs{a} + \eta \abs{a}^3 \\ 
        &+ \eta(\abs{a} + \norm{Q_j u^{\mathrm{inc}}}_{H^2(D)} + \epsilon \abs{a} + \eta \abs{a}^3)^3 \big) \\
        & \leq C(\Omega) (\norm{Q_j u^{\mathrm{inc}}}_{H^2(D)} + \epsilon \abs{a} + \eta \abs{a}^3).
    \end{aligned}
    \end{equation}
    In the second inequality, the constant from Lemma \ref{lem:uniform-W-bound} is absorbed into $C(\Omega)$. The final inequality follows from Lemma \ref{lem:helper-W-bound}, after decreasing the local radius if necessary and absorbing the resulting constants into $C(\Omega)$.
\end{proof}
\begin{remark}
    The conclusion of Proposition \ref{prop:scattered-final-prop} does not generally extend to the second regime of Proposition \ref{prop:alternative-regime-local-reduction}, where $\norm{u^{\mathrm{inc}}}_{H^2(D)}=O(1)$ and $\eta\to0$. In that regime, $W_\eta=h_{\mathrm{lin}}+O(\eta)$, so the appropriate leading-order approximation is $u=a\phi_j+h_{\mathrm{lin}}+O(\eta)$. By contrast, Proposition \ref{prop:scattered-final-prop} controls the remainder by $\norm{Q_ju^{\mathrm{inc}}}_{H^2(D)}+\epsilon\abs{a}+\eta\abs{a}^3$.
\end{remark}

\section{Numerical Experiments}\label{sec:numerical}

\subsection{Nonlinear Solver}\label{sec:nonlin-solver}

For simplicity of presentation, consider the spherical inclusion $D = B(0,R)$. We aim to solve the nonlinear Lippmann--Schwinger equation without an incident field, i.e., with $u^{\mathrm{inc}}=0$, by applying an iterative Newton method initialized at a normalized linear resonant pair. We begin with the nonlinear equation \cite[Eq. 3.2]{ammari_dielectric_2025}:
\begin{equation}\label{eq:num-nonlin-LS}
  u = \tau \,\omega^2 \K_D^\omega[N(u)],
  \qquad
  N(u) = u + \eta\abs{u}^2 u,
\end{equation}
where $\K_D^\omega$ denotes the volume integral operator from \eqref{eq:prelim-K-volume-operator-def} and $G^\omega(x)$ denotes the outgoing Green function of $-(\Delta + \omega^2)$ in $\R^3$, given by \eqref{Gomega} for $\omega \in \C$. Recall also that $G^\omega$ admits the following expansion about $\omega=0$:
\begin{equation}
    G^\omega(x) = \sum_{n=0}^{\infty}{\omega^n G_n(x)}, \quad \mathrm{with} \ G_n(x):= \frac{\iu^n \abs{x}^{n-1}}{4 \pi n!}, \qquad x\neq0.
\end{equation}

To select a solution on a branch with fixed normalization $\mathcal{N}>0$, we impose a normalization constraint. Since the equation is $S^1$-equivariant, the linearized system retains a phase null direction unless a local phase condition is also imposed. We therefore use
\begin{equation}
  \|u\|_W^2 := \sum_{j=1}^n |u_j|^2 W_j = \mathcal{N},
  \qquad
  \Im \langle u,u_{\mathrm{lin}} \rangle_W = 0,
\end{equation}
where $W_j$ are the quadrature weights, $u_{\mathrm{lin}}$ is a linear reference mode, and $\innerproduct{u}{v}_W := \sum_{j=1}^n{\overline{u_j}v_j W_j}$ is a weighted inner product, so that $\norm{u}^2_W = \innerproduct{u}{u}_W$. The second condition fixes the phase locally along the seeded continuation branch.
The method extends to arbitrarily shaped inclusions, as illustrated in Section \ref{sec:single-resonator}. For the present discussion, consider a tensor-product quadrature of the ball $B(0,R)$ in $(r,\theta,\phi)$, with node coordinates $X_j \in \R^3$, weights $W_j>0$, and equivalent-cell radii
\begin{equation}
    R_{\mathrm{eq},j} = \left(\frac{3W_j}{4\pi}\right)^{1/3},
    \qquad j=1,\ldots,n,
\end{equation}
for diagonal regularization. We use Gauss--Legendre quadrature in $r \in [0,R]$ and $s = \cos\theta \in [-1,1]$, together with the trapezoidal rule in $\phi \in [0,2\pi)$. Since $\dd V = r^2\,\dd r\,\dd s\,\dd\phi$, the resulting nodes and weights satisfy
\begin{equation}
    \int_{B(0,R)} f(x)\,\dd x \approx \sum_{j=1}^n f(X_j)\,W_j.
\end{equation}

The discrete kernel matrix $K_h^\omega \in\C^{n\times n}$ corresponding to the Green function has the form 
\begin{equation}\label{num:discrete-operator}
  (K_h^\omega)_{ij} =
  \begin{cases}
    \dfrac{e^{\iu \omega |X_i-X_j|}}{4\pi |X_i-X_j|} W_j, & i\neq j,\\[4pt]
    \displaystyle\int_{B(0,R_{\mathrm{eq},i})}\frac{e^{\iu \omega r}}{4\pi r}\,\dd V,
      & i=j,
  \end{cases}
\end{equation}
where the diagonal entry is the self-interaction term
\begin{equation}\label{eq:self-interaction-discrete}
  \int_{\abs{\rho} < R_{\mathrm{eq},i}}G^\omega(\rho)\dd \rho
  = \int_0^{R_{\mathrm{eq},i}} e^{\iu\omega r}\,r\,\dd r.
\end{equation}
This diagonal integral has the closed form
\begin{equation}\label{eq:num-closed-form-diag}
    \int_{0}^{R_{\mathrm{eq},i}} e^{\iu \omega r} r \dd r
    = \frac{R_{\mathrm{eq},i} e^{\iu \omega R_{\mathrm{eq},i}}}{\iu \omega}
    + \frac{e^{\iu \omega R_{\mathrm{eq},i}}-1}{\omega^2}.
\end{equation}
For the Jacobian, we also need the derivative of this discrete kernel with respect to $\omega$. The off-diagonal terms satisfy $\partial_\omega (K_h^\omega)_{ij}  = \frac{\iu e^{\iu \omega \abs{X_i - X_j}}}{4 \pi} W_j$ for $i \neq j$. Although the derivative of a diagonal term follows directly from \eqref{eq:num-closed-form-diag}, the implementation uses its Taylor expansion when $\abs{\omega R_{\mathrm{eq},i}} < 10^{-3}$:
\begin{equation}
    \frac{\dd}{\dd \omega} \int_{0}^{R_{\mathrm{eq},i}} e^{\iu \omega r}r \dd r
    = \frac{\iu R_{\mathrm{eq},i}^3}{3}
    - \frac{\omega R_{\mathrm{eq},i}^4}{4}
    - \frac{\iu \omega^2 R_{\mathrm{eq},i}^5}{10}
    + O(\abs{\omega}^3 R_{\mathrm{eq},i}^6).
\end{equation}

Theorem 3.5 of \cite{ammari_dielectric_2025} indicates that nonlinear dielectric resonances are \emph{close} to linear ones. This motivates a numerical implementation based on Newton's method, with the initial seed given by a linear dielectric resonant pair $(\omega_0, u_0)$. Recall that in the linear case, when $\eta=0$, we have $q_\eta(u) = 1$, and therefore the Lippmann--Schwinger equation \eqref{eq:num-nonlin-LS} reduces to
\begin{equation}\label{num:eq-linear-LS}
    u = \tau \omega^2 \K_D^\omega [u].
\end{equation}
Working in the scaled variables $\hat\omega:=\epsilon^{-1}\omega$ and $\epsilon:=\tau^{-1/2}$, we know from \cite{ammari_maxwell_2023} that, as $\tau \rightarrow \infty$, the problem reduces to the following linear eigenvalue problem:
\begin{equation}\label{eq:num-static-linear-LS}
    u = \hat\omega^2 \K_D[u].
\end{equation}
The Gohberg--Sigal theory \cite{gohberg-sigal} then gives the following asymptotics.
\begin{proposition}\label{prop:num-linear-asymp}
    Let $d=3$. When the contrast $\tau$ is sufficiently large, the scattering resonances for \eqref{num:eq-linear-LS} exist in the subwavelength regime, with the following asymptotic behaviour:
    \begin{equation*}
        \omega(\tau) = \dfrac{1}{\sqrt{\tau \lambda_j}} + O(\tau^{-1}), \quad \mathrm{as} \ \tau \rightarrow \infty,
    \end{equation*}
    with the associated normalized resonant state
    \begin{equation*}
        u = \phi_j + O(\tau^{-1/2}), \quad \norm{\phi_j}_{L^2(D)} = 1,
    \end{equation*}
    where $(\lambda_j, \phi_j)$ is an eigenpair of the Newtonian potential $\K_D$, i.e., $\K_D[\phi_j] = \lambda_j \phi_j$.
\end{proposition}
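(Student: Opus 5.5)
We recast the linear problem \eqref{num:eq-linear-LS} in the scaled variables, $\omega=\epsilon\hat\omega$ with $\epsilon=\tau^{-1/2}$. It becomes $u=\hat\omega^2\K_D^{\epsilon\hat\omega}[u]$, so we study the operator-valued function
\begin{equation*}
\mathcal{L}(\hat\omega,\epsilon):=I-\hat\omega^2\K_D^{\epsilon\hat\omega}
\end{equation*}
on $L^2(D)$ (or $H^2(D)$). This function is analytic in $\hat\omega$ on a neighbourhood $U$ of $\hat\omega_j=\lambda_j^{-1/2}$. At $\epsilon=0$ it reduces to $I-\hat\omega^2\K_D$, which is Fredholm of index zero because $\K_D$ is compact and self-adjoint. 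Its only characteristic value in $\overline{U}$ is $\hat\omega_j$; this value is simple, with kernel spanned by $\phi_j$, and it has null multiplicity one, since $\partial_{\hat\omega}(I-\hat\omega^2\K_D)\phi_j=-2\hat\omega_j\lambda_j\phi_j\notin\operatorname{Ran}(I-\hat\omega_j^2\K_D)=\phi_j^\perp$.

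\textbf{Step 1: existence of the characteristic value (Gohberg--Sigal).} By \eqref{eq:prelim-bounding-diff-newtonian-potential}, $\norm{\mathcal{L}(\hat\omega,\epsilon)-\mathcal{L}(\hat\omega,0)}\leq C\epsilon$ uniformly for $\hat\omega$ on $\partial U$. On $\partial U$ we also have $\norm{\mathcal{L}(\hat\omega,0)^{-1}}\leq C'$, since $\partial U$ avoids every $\pm\lambda_k^{-1/2}$; this follows as in Lemma \ref{lem:complement-newtonian-potential}. For $\epsilon$ small, the generalized Rouché theorem of Gohberg--Sigal therefore shows that $\mathcal{L}(\cdot,\epsilon)$ has exactly one characteristic value $\hat\omega(\epsilon)$ in $U$, counted with multiplicity, and that $\hat\omega(\epsilon)\to\hat\omega_j$. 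It lies near the positive root, so $\omega(\tau)=\epsilon\hat\omega(\epsilon)\to0$ as $\tau\to\infty$. This establishes existence in the subwavelength regime.

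\textbf{Step 2: the $O(\epsilon)$ rate via the reduction.} To obtain the rate we reuse the Lyapunov--Schmidt reduction of Proposition \ref{prop:local-reduction} with $\eta=0$ and $u^{\mathrm{inc}}=0$, where it is linear and explicit. We write $u=\phi_j+h$ with $h\in X_p$. Lemma \ref{lem:complement-K-d-omega} and Remark \ref{rmk:bound-easy-term} give
\begin{equation*}
h=\hat\omega^2\big(Q_j(I-\hat\omega^2\K_D^{\epsilon\hat\omega})\big)^{-1}Q_j\K_D^{\epsilon\hat\omega}\phi_j,\qquad \norm{h}_{H^2(D)}\leq C\epsilon .
\end{equation*}
The scalar equation \eqref{eq:amplitude-eq} then reads
\begin{equation*}
g(\hat\omega,\epsilon):=1-\hat\omega^2\lambda_j-\hat\omega^2\innerproduct{\phi_j}{(\K_D^{\epsilon\hat\omega}-\K_D)\phi_j}_D-\hat\omega^2\innerproduct{\phi_j}{\K_D^{\epsilon\hat\omega}h}_D=0 .
\end{equation*}
Here the second inner product is $O(\epsilon)$ by \eqref{eq:prelim-bounding-diff-newtonian-potential}. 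The third is also $O(\epsilon)$: by self-adjointness we have $\innerproduct{\phi_j}{\K_Dh}_D=0$, so only $(\K_D^{\epsilon\hat\omega}-\K_D)h=O(\epsilon^2)$ survives. The map $g$ is analytic in $\hat\omega$ and continuous (indeed analytic) in $\epsilon$, with $g(\hat\omega_j,0)=0$ and $\partial_{\hat\omega}g(\hat\omega_j,0)=-2\hat\omega_j\lambda_j\neq0$. The implicit function theorem thus yields a unique root with $\hat\omega(\epsilon)=\hat\omega_j+O(\epsilon)$. Consequently
\begin{equation*}
\omega(\tau)=\epsilon\hat\omega(\epsilon)=\tau^{-1/2}\lambda_j^{-1/2}+O(\tau^{-1}).
\end{equation*}

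\textbf{Step 3: the resonant state.} The associated state is $u=\phi_j+h$ with $\norm{h}=O(\epsilon)=O(\tau^{-1/2})$. After renormalizing to unit $L^2(D)$ norm, the normalization factor is $1+O(\epsilon^2)$, because $h\perp\phi_j$. Hence $u=\phi_j+O(\tau^{-1/2})$.

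\textbf{Main obstacle.} The delicate point is the justification of the analyticity and uniformity of $\mathcal{L}$ jointly in $(\hat\omega,\epsilon)$, including complex $\hat\omega$, since the resonance is genuinely complex with an $O(\epsilon)$ imaginary part coming from $\K_{D,1}$. Equally delicate is the verification that Step 1 and Step 2 pick out the same root. I would handle this through the uniqueness statements: Step 1 gives exactly one characteristic value in $U$, and Step 2 gives exactly one root of $g$ there. Because the reduction is an equivalence (Proposition \ref{prop:local-reduction}), the two coincide.
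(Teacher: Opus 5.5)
Your proof is correct, but it gets the rate by a different route from the paper. The paper's only argument is the reduction to $u=\hat\omega^2\K_D[u]$ from \cite{ammari_maxwell_2023} plus an appeal to Gohberg--Sigal theory. In the standard form of that argument, both existence and the $O(\epsilon)$ shift come from the operator-valued Rouch\'e theorem and the generalized argument principle applied to $\mathcal{L}(\cdot,\epsilon)$. Concretely, one takes a contour integral of $\mathcal{L}^{-1}\partial_{\hat\omega}\mathcal{L}$ and expands it by a Neumann series around the simple pole of $(I-\hat\omega^2\K_D)^{-1}$ at $\hat\omega_j$. You use Gohberg--Sigal only to count characteristic values. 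You get the rate from the $\eta=0$, $u^{\mathrm{inc}}=0$ case of the paper's own Lyapunov--Schmidt reduction.

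Your route has three advantages:
\begin{itemize}
\item it is elementary and self-contained within the paper;
\item it gives the state estimate directly in $H^2(D)$ with $h\perp\phi_j$, which is the normalization $\phi_*-\phi_j\perp\phi_j$ used in Section \ref{sec:single-resonator};
\item keeping the $\K_{D,1}$ term in $g$ immediately yields the next-order shift $-\frac{\iu\hat\omega_j^4}{8\pi}\big(\int_D\phi_j\,\dd x\big)^2\epsilon$ quoted there.
\end{itemize}

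Your Step 1 is in fact dispensable. Since $Q_j(I-\hat\omega^2\K_D^{\epsilon\hat\omega})$ is invertible on $X_p$, every kernel element has a nonzero $\phi_j$-component. So the characteristic values in $U$ are exactly the zeros of the scalar analytic function $g(\cdot,\epsilon)$. The classical scalar Rouch\'e theorem, comparing with $1-\hat\omega^2\lambda_j$ on $\partial U$, gives exactly one zero. Any zero satisfies $\abs{1-\hat\omega^2\lambda_j}\le C\epsilon$, hence $\abs{\hat\omega-\hat\omega_j}\le C\epsilon$.

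This also removes your ``main obstacle''. Joint analyticity follows at once from the norm-convergent power series \eqref{eq:prelim-K-volume-operator-expansion} in $\epsilon\hat\omega$, and there is no need to match roots between two arguments.

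In return, the Gohberg--Sigal approach is more robust. It counts multiplicities without constructing a complement. It also extends directly to a non-simple $\lambda_j$, where your scalar equation would become a matrix equation and the state estimate would hold only for suitable eigenvectors. Simplicity of $\lambda_j$, which you use, is implicit in the statement through the standing assumption of Section \ref{sec:theory-reduction}.
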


Let $(\lambda_0,\phi_0)$ denote the principal eigenpair of the discrete counterpart of the Newtonian potential, normalized by $\norm{\phi_0}_W^2=\sum_{j=1}^{n}\abs{\phi_{0,j}}^2W_j=1$. The asymptotics in Proposition \ref{prop:num-linear-asymp} suggest the seed $\omega_{\mathrm{lin}}\approx(\tau\lambda_0)^{-1/2}$. For robustness, we add a small negative imaginary part so that Newton's method starts in the lower half-plane $\Im\omega<0$ associated with dielectric resonances.

Consider now the discrete analogue of \eqref{eq:num-nonlin-LS}:
\begin{equation}\label{eq:discrete-nonlinear-problem-homog}
    F_h(u, \omega) := u - \tau \omega^2 K_h^\omega[N(u)] = 0,
\end{equation}
where the nonlinearity acts componentwise and, for a single resonator, the unknown is the pair $(u, \omega) \in \C^n \times \C$. To obtain a well-posed problem for the implementation, we additionally impose the normalization and phase constraints, resulting in the system
\begin{equation}\label{num:system-nonlin-solver}
  M_h(u,\omega) := \begin{pmatrix}
    F_h(u,\omega)\\
    \|u\|_W^2 - \mathcal{N}\\
    \Im\langle u,u_{\mathrm{lin}}\rangle_W
  \end{pmatrix} = 0,
\end{equation}
where $u_{\mathrm{lin}}$ is the normalized discrete linear mode described above.
\begin{remark}
    It is important to recall that the nonlinearity $N(u)$ is only real Fr\'echet differentiable. Thus, the complex unknowns are split into real and imaginary parts, and Newton's method is applied to this ``doubled'' system of real variables.
\end{remark}
Concretely, let $\vb{x} = (\Re u, \Im u, \Re \omega, \Im \omega) \in \R^{2n+2}$, where $n \in \N_+$ is the total number of quadrature points. The Jacobian of \eqref{num:system-nonlin-solver}, denoted by $J_h(\vb x) := D_{\vb x} M_h(\vb x)
  \in \R^{(2n+2)\times(2n+2)}$, has the following block matrix form:
\begin{equation}\label{num:jac-block-matrix}
    J_h(\vb{x}) =
    \begin{pmatrix}
        J_u & J_\omega \\
        J_{\mathrm{normalization}} & \vb{0}_{1\times 2} \\
        J_{\mathrm{phase}} &  \vb{0}_{1\times 2}
    \end{pmatrix},
\end{equation}
where the terms $J_{\mathrm{normalization}}$ and $J_{\mathrm{phase}}$ are the differentials corresponding to the normalization and phase constraints from \eqref{num:system-nonlin-solver}. Using \cite[Eq. 3.7]{ammari_dielectric_2025}, we obtain, in the discrete case,
\begin{equation}\label{eq:num-disc-F-deriv}
    D_uF_h(u,\omega)[v]
    = v - \tau \omega^2 K_h^\omega\big[ (1+2\eta\abs{u}^2)v
    + \eta u^2 \overline{v} \big], \quad v\in \C^n.
\end{equation}
For $u \in \C^n$, split the derivative into two matrix terms:
\begin{equation}\begin{aligned}
    A &:= I - \tau\omega^2 K_h^\omega\mathrm{diag}(1+2\eta\abs{u}^2),\\
    B &:= -\tau\omega^2K_h^\omega \mathrm{diag}(\eta u^2),
\end{aligned}
\end{equation}
with $\mathrm{diag}(v)$ denoting the $\C^{n\times n}$ diagonal matrix formed by the entries of $v \in \C^n$. This notation allows us to write \eqref{eq:num-disc-F-deriv} as $D_uF_h(u,\omega)[v] = Av + B\overline{v}$, and then the first block term $J_u \in \R^{2n\times 2n}$ is the unique matrix satisfying
\begin{equation}
    \begin{pmatrix}
        \Re(D_uF_h(u,\omega)[v]) \\
        \Im(D_uF_h(u,\omega)[v])
    \end{pmatrix}
    = J_u \begin{pmatrix}
        \Re(v) \\
        \Im(v)
    \end{pmatrix}, \quad \mathrm{for} \ v\in \C^n,
\end{equation}
where the previous notation results in the following expression for this block:
\begin{equation}
    J_u = \begin{pmatrix}
        \Re (A+B) & -\Im (A-B) \\
        \Im (A+B) & \Re (A-B) 
    \end{pmatrix}.
\end{equation}

The remaining block in the first row, $J_\omega$, can be described similarly. The expansion \eqref{eq:prelim-K-volume-operator-expansion} and the complex Fr\'echet differentiability of the map $\omega \mapsto F_h(u,\omega)$ give, for $z\in\C$,
\begin{equation}
    D_\omega F_h(u,\omega)[z]
    = -\tau \big(2\omega K_h^\omega +\omega^2 \partial_\omega K_h^\omega\big)N(u)\,z.
\end{equation}
It follows that $J_\omega \in \R^{2n\times 2}$ is the real matrix satisfying
\begin{equation}
    \begin{pmatrix}
        \Re(D_\omega F_h(u,\omega)[z]) \\
        \Im(D_\omega F_h(u,\omega)[z])
    \end{pmatrix}=
    J_{\omega} \begin{pmatrix}
        \Re(z) \\
        \Im(z)
    \end{pmatrix}, \quad \mathrm{for} \ z \in \C.
\end{equation}
Set $f_\omega := D_\omega F_h(u,\omega)[1] \in \C^n$. Since the derivative is complex linear, $D_\omega F_h(u,\omega)[z] = f_\omega z$, and hence
\begin{equation}
    J_\omega = \begin{pmatrix}
        \Re f_\omega & -\Im f_\omega \\
        \Im f_\omega & \Re f_\omega
    \end{pmatrix}.
\end{equation}

At iteration $\ell$, let $\delta\vb{x}^{(\ell)}$ denote the Newton correction determined by
\begin{equation}\label{num:iterative-newton}
\begin{aligned}
    &J_h(\vb{x}^{(\ell)})\delta\vb{x}^{(\ell)}=-M_h(\vb{x}^{(\ell)}), \\
    &\vb{x}^{(\ell+1)}=\vb{x}^{(\ell)}+\alpha_\ell\delta\vb{x}^{(\ell)},
\end{aligned}
\end{equation}
where the implementation uses a backtracking line search. The full Newton step $\alpha_\ell=1$ is tried first; if it is infeasible, non-finite, or fails to reduce the residual norm, the step size is halved. The condition $\Im\omega<0$ is enforced by rejecting steps that leave the lower half-plane.

For a resonator dimer $D=D_1\cup D_2$, the discretized problem \eqref{eq:discrete-nonlinear-problem-homog} takes a block form. The state is $u=(u_1,u_2)\in\C^{2n}$, and the discrete integral operator is
\begin{equation}\label{eq:num-blockmat-dimer}
    K_h^{\omega, L} = \begin{pmatrix}
        K_{\mathrm{self}}^\omega & K_{12}^{\omega, L} \\
        K_{21}^{\omega, L} & K_{\mathrm{self}}^\omega
    \end{pmatrix},
\end{equation}
where the diagonal self-interaction terms are given by \eqref{num:discrete-operator}. The cross-interaction terms between the resonators have the form
\begin{equation}
    (K_{12}^{\omega, L})_{ij} = \dfrac{e^{\iu \omega r_{ij}^{12}(L)}}{4 \pi r_{ij}^{12}(L)}W_j,
\end{equation}
with $K_{21}^{\omega,L}$ defined analogously. Given reference quadrature nodes $X_i=(x_i,y_i,z_i)$ on a single resonator, we denote the cross-interaction distance by $r_{ij}^{12}(L):=\abs{X_i^{(1)}-X_j^{(2)}}$. The superscripts indicate that the quadrature nodes are translated to the first and second resonators, respectively.

Accordingly, the discrete dimer problem is
\begin{equation}
    F_h(u_1,u_2,\omega) = \begin{pmatrix}
        u_1 \\
        u_2
    \end{pmatrix} - \tau \omega^2 K_h^{\omega, L} \begin{pmatrix}
        N(u_1) \\
        N(u_2)
    \end{pmatrix} = 0,
\end{equation}
where $\norm{u_1}^2_W+\norm{u_2}^2_W=\mathcal{N}$ and the unknown is represented by the real vector
\begin{equation}
    \vb{x} = (\Re u_1,\Re u_2, \Im u_1,\Im u_2,\Re \omega,\Im \omega) \in \R^{4n+2}. 
\end{equation}

To exhibit symmetry breaking from the symmetric branch and track the asymmetric branches, we employ continuation methods of the kind available in bifurcation packages such as the Julia package \lstinline{BifurcationKit} \cite{veltz:hal-02902346}. The current Python implementation supports natural Newton continuation, deflated continuation \cite{farrel-2015}, and pseudo-arclength continuation (PALC).

For clarity, we describe the PALC method used to generate the bifurcation diagrams in Section \ref{sec:dimer}. This method, presented in \cite{Keller1988LecturesON,rabinowitz}, seeks solution curves of $F(x,p)=0$, where $p$ is a real parameter. Under the usual regularity assumptions, a known solution $(x_0,p_0)$ lies on a local solution curve
\begin{equation}
    \gamma(s)=(x(s),p(s)), \qquad s\in I.
\end{equation}

The PALC method \cite{Keller1988LecturesON} augments the nonlinear system with a hyperplane condition. Let $m$ denote the number of real unknowns in $x$, and define
\begin{equation}
    \innerproduct{(v,q)}{(w,r)}_\theta
    := \frac{2-\theta}{m}v^\top w+\theta qr,
    \qquad \theta\in(0,1].
\end{equation}
If $t_0=(t_{x,0},t_{p,0})$ is the unit tangent at $(x_0,p_0)$, the hyperplane condition is
\begin{equation}\label{eq:palc-constraint}
    \mathcal{C}(x,p)
    := \innerproduct{(x-x_0,p-p_0)}{t_0}_\theta-\Delta s=0.
\end{equation}
Here, $\theta$ controls the relative weights of the state and parameter components, and $\Delta s$ is the pseudo-arclength step. In our setting, consider the map $H:\R^m\times\R\to\R^m$,
\begin{equation}\label{eq:palc-map}
    H(x,p) := \begin{pmatrix}
    \Re (F_h(u,\omega;p)) \\
    \Im (F_h(u,\omega;p))\\
    \|u\|_W^2 - \mathcal{N}\\
    \Im\langle u,u_{\mathrm{lin}}\rangle_W
  \end{pmatrix} = 0,
\end{equation}
which is the real form of \eqref{num:system-nonlin-solver}. Here $m=2n+2$ for a single resonator and $m=4n+2$ for a dimer. One may take either $p=\delta$, the perturbation of the linear coefficient in \eqref{eq:linear-imperfection}, or $p=\mathcal{N}$, replacing $\mathcal{N}$ by $p$ in the normalization row of \eqref{eq:palc-map}.

Let $H_x$ and $H_p$ denote the partial derivatives of $H$. At a regular point,
\begin{equation}
    \operatorname{rank}(H_x\ \ H_p)=m.
\end{equation}
Thus, $DH=(H_x\ \ H_p)$ has a one-dimensional kernel, and $H^{-1}(0)$ is locally a smooth curve \cite[Ch. 5]{Keller1988LecturesON}. Differentiating $H(x(s),p(s))=0$ gives
\begin{equation}
    H_x(x(s),p(s))x'(s)+H_p(x(s),p(s))p'(s)=0,
    \qquad
    (x'(s),p'(s))\in\ker DH.
\end{equation}
For the reduced dimer problem, the corresponding local asymmetric branch is obtained from the invertibility of
\begin{equation}
    \frac{\partial(\Re F_+,\Re F_-,\Im F_+,\Im F_-)}
    {\partial(\Re\hat\omega,\Im\hat\omega,p_+,\Delta\theta)} \ ,
\end{equation}
at $(\Re\hat\omega,\Im\hat\omega,p_+,\Delta\theta)
=(\hat\omega_*,0,p_{+,*},0)$, where $\hat\omega_*$ and $p_{+,*}$ denote the
limiting bifurcation frequency and symmetric-mode amplitude, respectively; see
\cite[Sec. 4.1]{ammari_dielectric_2025}.

At a solution point $(x_k,p_k)$, compute a nonzero null vector of
$DH(x_k,p_k)$, for example by a bordered linear solve, and normalize it with respect to
$\norm{\cdot}_\theta$. The unit tangent is
\begin{equation}\label{eq:unit-tangent-palc}
    t_k := (t_{x,k}, t_{p,k}) \in \R^m \times \R.
\end{equation}
Its sign is chosen so that $\innerproduct{t_k}{t_{k-1}}_\theta>0$. This nullspace construction remains valid at folds, where $H_x$ may be singular. The predictor is $(x^0,p^0)=(x_k,p_k)+t_k\Delta s$, and the corrector solves
\begin{equation}\label{eq:palc-full-system}
    \begin{pmatrix}
        H(x,p) \\
        \innerproduct{t_k}{(x-x_k, p-p_k)}_\theta - \Delta s
    \end{pmatrix} = 0,
\end{equation}
which is solved using Newton's method with a backtracking line search and an adaptively adjusted step size $\Delta s$.
\begin{remark}
    In the mirror-symmetric dimer computations, we use standard techniques from bifurcation theory \cite[Sec. 5.26]{Keller1988LecturesON} to switch to the asymmetric branches. We first break the $\Z_2$-symmetry with the linear-coefficient perturbation $\delta>0$ and use PALC with $p=\mathcal{N}$ to find a perturbed solution at some $\mathcal{N}_p>\mathcal{N}_{\mathrm{crit}}$. Starting from that solution, natural Newton continuation in $p=\delta$ yields the unperturbed solution at $\delta=0$ and normalization $\mathcal{N}_p$, beyond the bifurcation point. Finally, PALC in $p=\mathcal{N}$ traces the asymmetric branch back to its bifurcation from the symmetric branch.
\end{remark}

We finally note that the numerical framework extends to the problem with an incident wave in \eqref{eq:nonlinear-helmholtz}. The discretization remains unchanged, and the discrete nonlinear equation is
\begin{equation}\label{eq:discrete-equation-incfield}
    u - u^{\mathrm{inc}} - \tau\omega^2 K_h^{\omega}[N(u)] = 0.
\end{equation}
This is the incident-wave analogue of \eqref{eq:discrete-nonlinear-problem-homog}, with the frequency $\omega$ prescribed rather than unknown. The problem therefore lacks the $S^1$ phase symmetry of the homogeneous eigenvalue problem and requires neither the phase nor the normalization constraint. It yields a square real system for
\begin{equation}
    \vb{x}=(\Re u,\Im u)\in\R^{2n}.
\end{equation}

\subsection{Single Resonator}\label{sec:single-resonator}

In this section, we focus on a single dielectric resonator $D\subset \R^3$, which may be nonspherical, and study the asymptotic behaviour of the nonlinear dielectric resonant pair $(u,\omega)$. The implementation is written for a resonator dimer, whose kernel matrix has the block form \eqref{eq:num-blockmat-dimer}. For a single resonator, the cross-interaction blocks are simply omitted, leaving the matrix in \eqref{num:discrete-operator}.

For the asymptotic discussion, fix a sufficiently small normalization constant $\mathcal{N}>0$ and a sufficiently large contrast $\tau$. Let $\omega(\mathcal{N},\tau)$ be the nonlinear dielectric resonance solving \eqref{eq:num-nonlin-LS}, whose local existence in this regime follows from \cite[Thm. 3.5]{ammari_dielectric_2025}. The numerical continuation below also explores values outside this proved local regime. Throughout, $\hat\omega:=\epsilon^{-1}\omega$ denotes the scaled frequency, with $\epsilon:=\tau^{-1/2}$. We also define $\hat\omega_j := \lambda_j^{-1/2}$, where $\lambda_j$ is a simple eigenvalue of the Newtonian potential $\K_D$ with an $L^2$-normalized eigenfunction $\phi_j$.

From \cite[Cor. 3.3]{ammari_dielectric_2025}, we know that the nonlinear dielectric resonances exist near the linear ones in the high-contrast, small-amplitude regime. Moreover, from \cite[Cor. 3.4]{ammari_dielectric_2025} we have the following asymptotic for the scaled nonlinear resonance:
\begin{equation}\label{eq:asympt-dielec-reso-nonlinear}
    \hat\omega(a,\epsilon) = \hat\omega_j - \frac{\iu \hat\omega_j^4}{8 \pi}\big( \int_{D}{\phi_j \dd x} \big)^2 \epsilon + O(\abs{a}^2 + \epsilon^2).
\end{equation}
Consequently, in the joint small-amplitude, high-contrast limit $(a,\epsilon)\to(0,0)$,
\begin{equation}\label{eq:scaling-law-tau}
    \begin{aligned}
    \Im \hat\omega(a,\epsilon)
    &= -c_j\epsilon + O(\abs{a}^2+\epsilon^2),\\
    \Re \hat\omega(a,\epsilon)
    &= \hat\omega_j + O(\abs{a}^2+\epsilon^2),
    \end{aligned}
    \qquad
    c_j:=\frac{\hat\omega_j^4}{8\pi}
    \left(\int_D\phi_j\dd x\right)^2.
\end{equation}
We choose $\phi_j$ to be real and normalized, so $c_j\geq0$. Equation \eqref{eq:scaling-law-tau} is a two-parameter expansion; at fixed nonzero $\mathcal{N}$, the plots below test the expected dominance of these leading terms rather than a separate one-parameter limit theorem.
Let $(\omega_*,\phi_*)$ denote the linear subwavelength dielectric resonant pair satisfying
\begin{equation}
    \begin{aligned}
    \omega_* &= \tau^{-1/2} \hat\omega_j + O(\tau^{-1}),\\
    \phi_* &= \phi_j + O(\tau^{-1/2}),
    \qquad \norm{\phi_j}^2_{L^2(D)}=1,
    \qquad \phi_* - \phi_j \perp \phi_j,
    \end{aligned}
\end{equation}
where $\phi_*$ is the linear dielectric resonant state corresponding to $(\lambda_j, \phi_j)$. From \cite[Remark 2]{ammari_dielectric_2025}, in the high-contrast, small-amplitude regime, the mapping $\abs{a} \mapsto \mathcal{N}$ is invertible, which allows us to reparametrize the nonlinear solution pair $(\omega(a,\epsilon), u(a,\epsilon))$ as $(\omega(\mathcal{N},\tau), u(\mathcal{N},\tau))$. Thus, we expect the following asymptotics for the \emph{unscaled} nonlinear dielectric resonance:
\begin{equation}\label{eq:scaling-linear-ON}
    \abs{\omega(\mathcal{N},\tau) - \omega_*(\tau)} = O(\mathcal{N}),
\end{equation}
for a fixed high contrast $\tau$.

Using the discretization described in Section \ref{sec:nonlin-solver}, we present numerical results for a single resonator with three choices of $D$: a sphere, an ellipsoid, and a peanut-shaped domain. We set the small normalization constant to $\mathcal{N}=0.01$ and plot $\Re\hat\omega$ and $\abs{\Im\hat\omega}$ for the resonance computed by Newton's method over the contrast range $50\leq\tau\leq25000$.
Figures \ref{fig:asymp_re_vs_contrast} and \ref{fig:asymp_im_vs_contrast} numerically exhibit the leading behaviour in \eqref{eq:scaling-law-tau}. In particular, the predicted slope of $\abs{\Im\hat\omega}$ as a function of $\epsilon$ is $c_j$. The different magnitudes reflect the geometry-dependent eigenpairs, including their volume dependence. Finally, we fix the high contrast $\tau=500$ and inspect the resonance as a function of $\mathcal{N}$, including values outside the small-amplitude regime. Figure \ref{fig:asymp_omega_vs_N} exhibits the approximate scaling in \eqref{eq:scaling-linear-ON}. The numerical values begin to deviate from the leading-order reference once $\mathcal{N}>1$ for the ellipsoid and sphere, and around $\mathcal{N}\approx0.5$ for the peanut shape.

\begin{figure}[htbp]%
    \centering
    \includegraphics[width=0.9\linewidth]{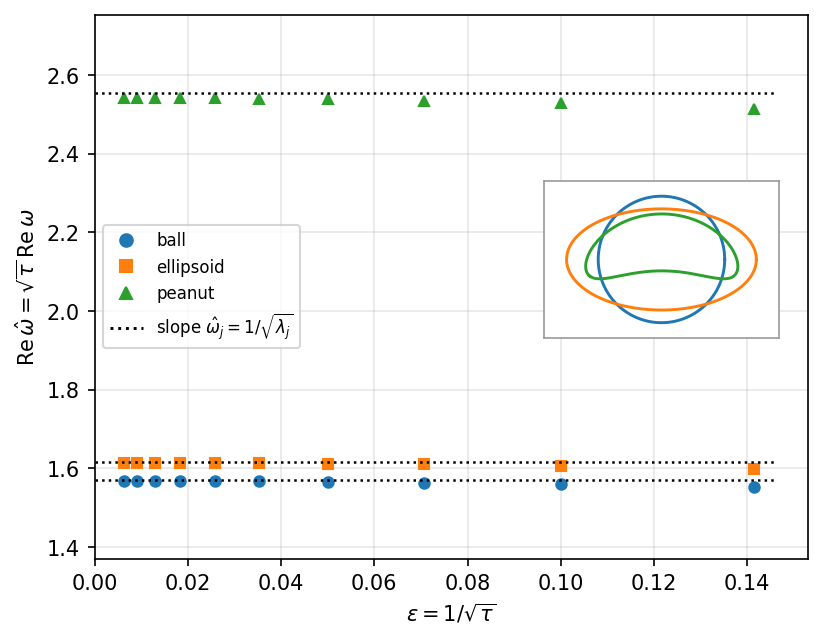}
    \caption{Real part of the scaled nonlinear dielectric resonance $\hat\omega(a,\epsilon)$ as a function of the contrast $\tau = 1/\epsilon^2$ for three resonator shapes. The leading eigenpair $(\lambda_j,\phi_j)$ is computed separately for each shape, so each reference line depends on the corresponding leading eigenpair of $\K_D$.}
    \label{fig:asymp_re_vs_contrast}
\end{figure}

\begin{figure}[htbp]%
    \centering
    \includegraphics[width=0.9\linewidth]{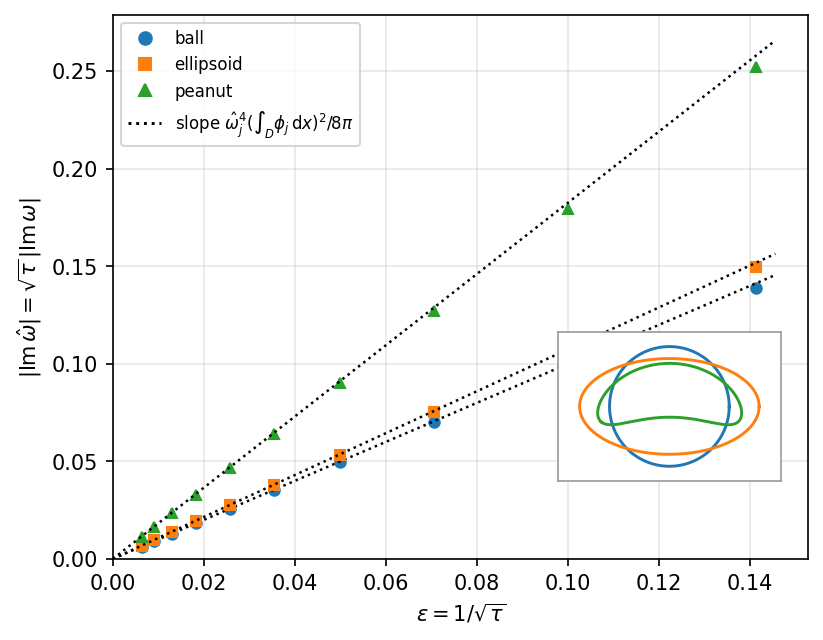}
    \caption{Magnitude of the imaginary part of the scaled nonlinear dielectric resonance $\hat\omega(a,\epsilon)$ as a function of the contrast $\tau = 1/\epsilon^2$ for three resonator shapes.}
    \label{fig:asymp_im_vs_contrast}
\end{figure}

\begin{figure}[htbp]%
    \centering
    \includegraphics[width=0.9\linewidth]{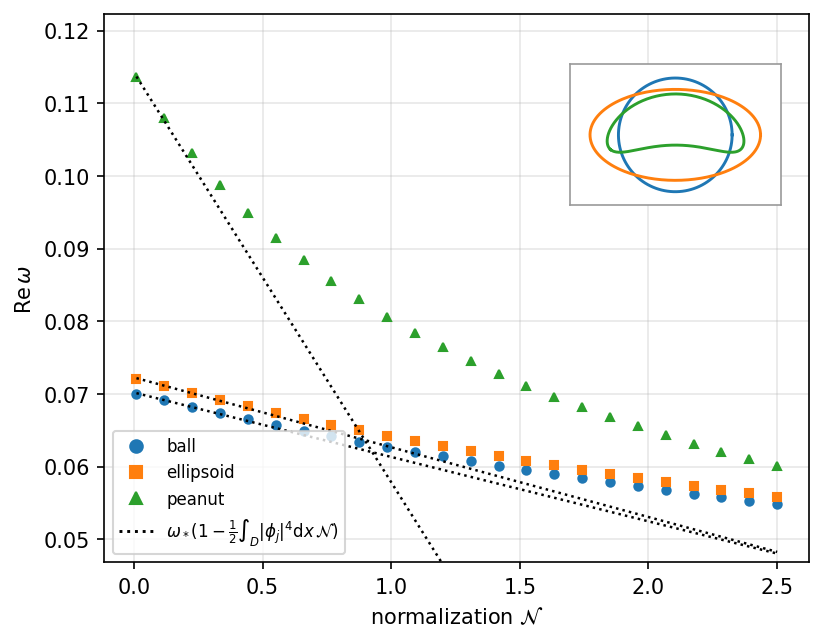}
    \caption{Real part of the nonlinear dielectric resonance as a function of the normalization constant $\mathcal{N}$ at the fixed high contrast $\tau=500$. Reference lines contain the leading-order $O(\mathcal{N})$ terms after reparametrization of equation \eqref{eq:asympt-dielec-reso-nonlinear}.}
    \label{fig:asymp_omega_vs_N}
\end{figure}

We next assess discretization convergence. For fixed contrast $\tau$ and normalization constant $\mathcal{N}$, we study whether the discrete nonlinear resonant state stabilizes under grid refinement and whether the discrete nonlinear dielectric frequency converges algebraically to a refined-grid reference.

Because the nonlinear problem is $S^1$-equivariant, the states must be phase-aligned before they are compared. Let $u_h^{\mathrm{rad}}(r)$ and $u_{\mathrm{ref}}^{\mathrm{rad}}(r)$ denote the shell-averaged radial profiles. We use the phase-invariant error
\begin{equation}\label{eq:radial-error}
    E_u(h) := \inf_{\vartheta\in[0,2\pi)}
    \left(
    \frac{\int_0^R\abs{e^{\iu\vartheta}u_h^{\mathrm{rad}}(r)
    -u_{\mathrm{ref}}^{\mathrm{rad}}(r)}^2r^2\dd r}
    {\int_0^R\abs{u_{\mathrm{ref}}^{\mathrm{rad}}(r)}^2r^2\dd r}
    \right)^{1/2}.
\end{equation}
Here, $u_{\mathrm{ref}}^{\mathrm{rad}}$ is computed on the finest grid. This does not prove the norm convergence of $u_h$, but it gives numerical evidence that the principal state stabilizes under mesh refinement.

For the right panel of Figure \ref{fig:exp6-single}, we compute the discrete nonlinear resonant frequency $\omega_h$ at the same fixed parameters. Let $N_{\mathrm{dof}}$ denote the number of quadrature points in the Nystr\"om discretization. The equivalent-cell radii satisfy $R_{\mathrm{eq},j}=(3W_j/(4\pi))^{1/3}$, and the Taylor expansion of the self-interaction in \eqref{eq:self-interaction-discrete} suggests a diagonal regularization error of order $O(R_{\mathrm{eq}}^2)$. Refinement from $(n_r,n_\theta,n_\phi)=(p,p,2p)$ to $(p+1,p+1,2(p+1))$ gives $R_{\mathrm{eq}}\sim N_{\mathrm{dof}}^{-1/3}$, suggesting the heuristic model
\begin{equation}
    \omega_h=\omega_\infty^{\mathrm{ext}}
    +cN_{\mathrm{dof}}^{-2/3}
    +o(N_{\mathrm{dof}}^{-2/3}).
\end{equation}
We therefore compare $\omega_h$ with the extrapolated intercept $\omega_\infty^{\mathrm{ext}}$.

Figure \ref{fig:exp6-single} shows $E_u(h)$ from \eqref{eq:radial-error} in the left panel and $\abs{\omega_h-\omega_\infty^{\mathrm{ext}}}$ in the right panel. Both plots support the preceding refinement heuristics.

\begin{figure}[htbp]
    \centering
    \includegraphics[width=0.9\linewidth]{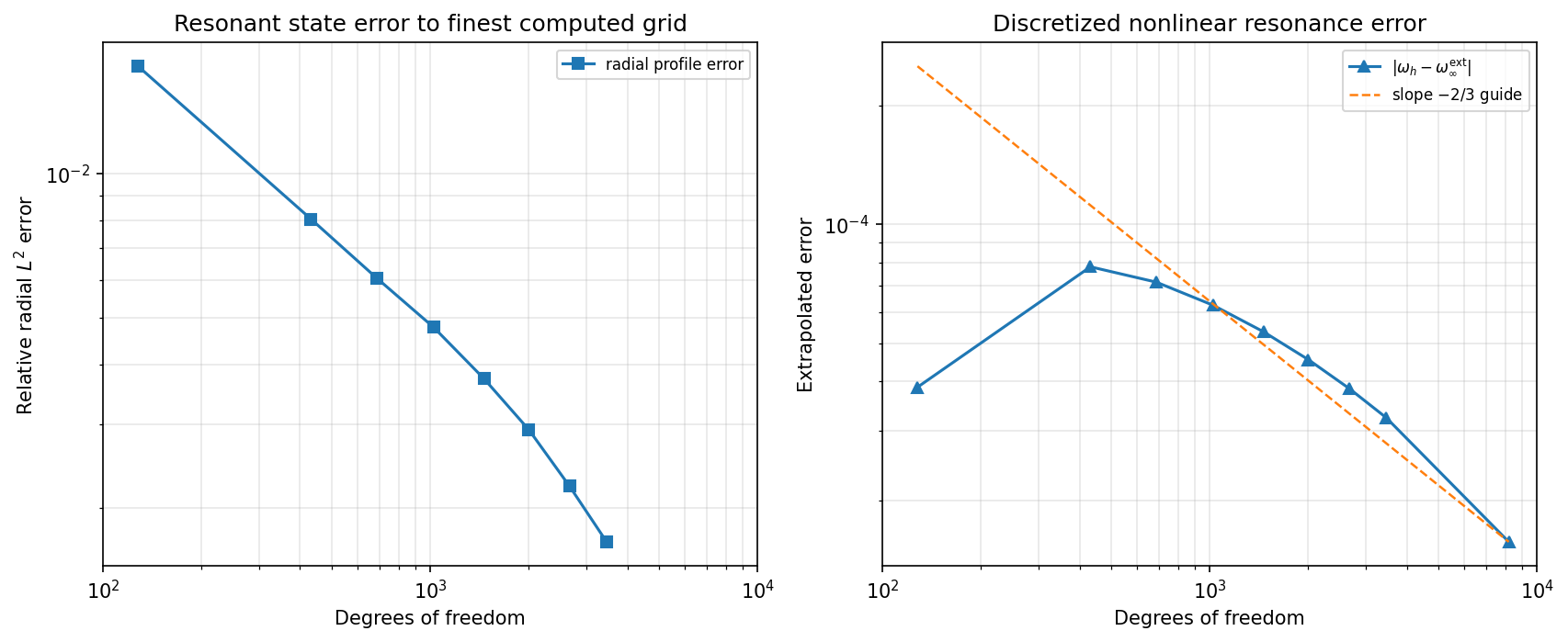}
    \caption{Grid-independence study. (Left) Phase-aligned radial error $E_u$. (Right) Nonlinear dielectric resonant-frequency error.}
    \label{fig:exp6-single}
\end{figure}

We next compare our numerical framework with the asymptotic analysis of nonlinear spherical scatterers in \cite{meklachi_asymptotic_2018}. The authors consider a spherical scatterer of radius $r_h=h$ governed by
\begin{equation}\label{eq:mms18}
    \Delta u_h+k_h^2(1+\eta_h)u_h
    +k_h^2\beta_h\abs{u_h}^2u_h=0
    \quad \mathrm{in}\ \R^3,
\end{equation}
with the outgoing radiation condition, where $B_h:=hB$, $\eta_h=h^{-2}\eta_0\chi_{B_h}$, and $\beta_h=h^{-2}\beta_0\chi_{B_h}$. Here $h$ denotes the particle-size parameter, not the quadrature index used elsewhere in this section. After translating the notation, we reproduce their setting by replacing the product $\tau N(u)$ in \eqref{eq:num-nonlin-LS} with $N_h(u):=\eta_hu+\beta_h\abs{u}^2u$.

Figure \ref{fig:mms18} reproduces the reported convergence of the size-dependent solution $u_h$ to the limiting field $u_0$ in \cite[Eq.~(53)]{meklachi_asymptotic_2018}.

\begin{figure}%
    \centering
    \begin{subfigure}[b]{0.45\textwidth}
        \centering
        \includegraphics[width=\linewidth]{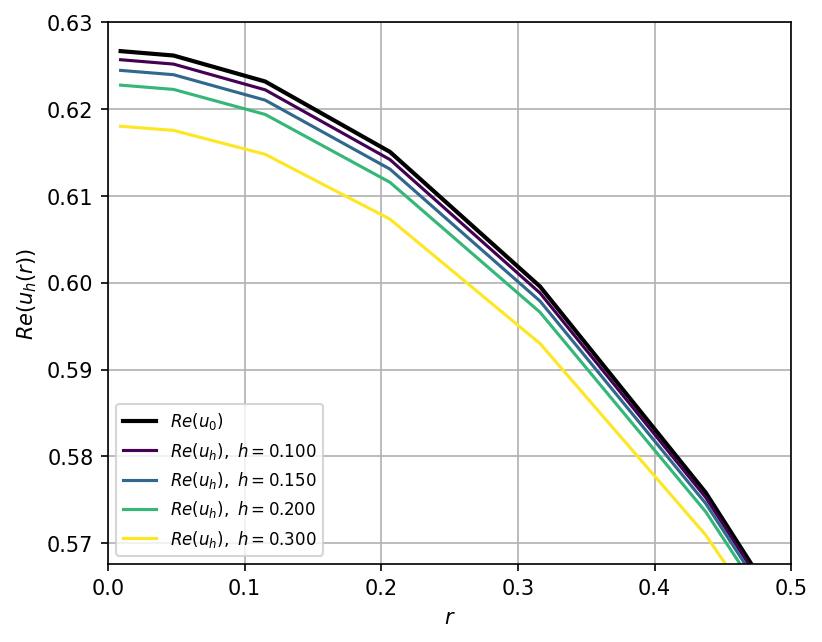}
    \end{subfigure}
    \begin{subfigure}[b]{0.45\textwidth}
        \centering
        \includegraphics[width=\linewidth]{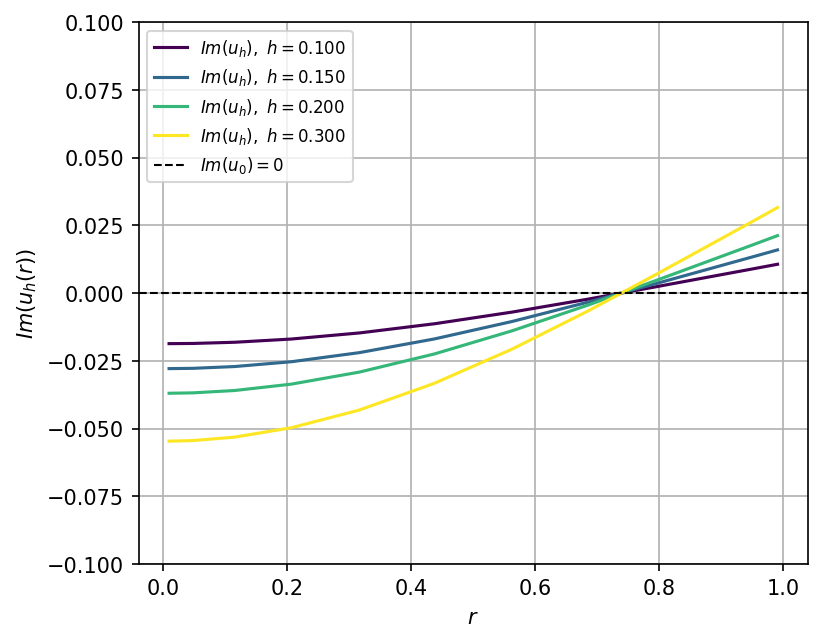}
    \end{subfigure}
    \caption{(Left) Convergence of $\Re(u_h)$ to $\Re(u_0) = u_0$ as $h\rightarrow0$. (Right) Convergence of $\Im(u_h)$ to $\Im(u_0)=0$ as $h\rightarrow0$.}
    \label{fig:mms18}
\end{figure}

We conclude with the incident-field problem introduced in \eqref{eq:nonlin-LS-incfield-def}. As noted at the end of Section \ref{sec:nonlin-solver}, the same numerical framework applies. Let $d\in\mathbb{S}^2$ be a propagation direction and consider the plane wave
$u^{\mathrm{inc}}(x)=\alpha e^{\iu\omega d\cdot x}$ with real amplitude $\alpha>0$ and physical frequency $\omega=\epsilon\hat\omega$.
To make the dependence on $\hat\omega$ explicit, set
\begin{equation}
     K(\hat\omega):=1-\hat\omega^2\innerproduct{\phi_j}{\K_D^{\epsilon\hat\omega}[\phi_j]}_D,\qquad
    M(\hat\omega):=\hat\omega^2\innerproduct{\phi_j}{\K_D^{\epsilon\hat\omega}[\abs{\phi_j}^2\phi_j]}_D,\qquad
    f:=\innerproduct{\phi_j}{u^{\mathrm{inc}}}_D.
\end{equation}
Using the definition \eqref{eq:def-rest-term-W}, equation \eqref{eq:unfolded-Pj-equation} becomes
\begin{equation}\label{eq:condensed-main-incfield-scalar-eq}
    (K(\hat\omega) - \eta M(\hat\omega) \abs{a}^2) a -f = \mathcal{R}_W.
\end{equation}
Recall that $\hat\omega_j=\lambda_j^{-1/2}$. The operator expansion \eqref{eq:prelim-K-volume-operator-expansion}, taken as $\epsilon\to0$ with $\hat\omega$ in a bounded neighbourhood of $\hat\omega_j$, gives
\begin{equation}\label{eq:helper-expansion-KDepsomega}
    \K_D^{\epsilon\hat\omega}
    =\K_D+\epsilon\hat\omega\K_{D,1}
    +O(\epsilon^2\abs{\hat\omega}^2)
\end{equation}
in operator norm. Since the kernel of $\K_D$ is real, $\phi_j$ may be chosen real, and hence
\begin{equation}
    \innerproduct{\phi_j}{\K_D^{\epsilon\hat\omega}[\phi_j]}_D
    =\lambda_j+\frac{\iu\epsilon\hat\omega}{4\pi}
    \left(\int_D\phi_j\,\dd x\right)^2
    +O(\epsilon^2\abs{\hat\omega}^2).
\end{equation}
For $\Delta:=\hat\omega-\hat\omega_j$, the identity $\hat\omega_j^2\lambda_j=1$ yields
\begin{equation}
    1-\hat\omega^2\lambda_j
    =-\frac{2}{\hat\omega_j}\Delta-\lambda_j\Delta^2.
\end{equation}
For the incident-field experiments below, we restrict attention to a mode satisfying $\int_D\phi_j\,\dd x\neq0$, as holds for the principal mode used numerically. Define the positive line width
\begin{equation}
    \Gamma:=\frac{\epsilon\hat\omega_j^4}{8\pi}
    \left(\int_D\phi_j\,\dd x\right)^2.
\end{equation}
It follows that
\begin{equation}\label{eq:leading-order-K-simplified-ampl-eq}
    K(\hat\omega)
    =-\frac{2}{\hat\omega_j}(\Delta+\iu\Gamma)
    +O(\abs{\Delta}^2+\epsilon\abs{\Delta}+\epsilon^2).
\end{equation}
Thus, keeping the scaled detuning $\mu:=\Delta/\Gamma$ fixed gives a natural line-width scale for studying \eqref{eq:condensed-main-incfield-scalar-eq} as $\tau\to\infty$. In the first experiment, shown in Figure \ref{fig:inc-field-exp1}, we fix the high contrast $\tau=200$ and sweep the incident frequency across this scaled detuning. Newton's method computes the solution of \eqref{eq:discrete-equation-incfield}. As expected, $\norm{u_h}_W^2$ closely follows $\abs{a_{\mathrm{th}}}^2$, where $a_{\mathrm{th}}$ is obtained from the leading-order relation \eqref{eq:simplified-amplitude-relation}.

\begin{figure}%
    \centering
    \includegraphics[width=0.9\textwidth]{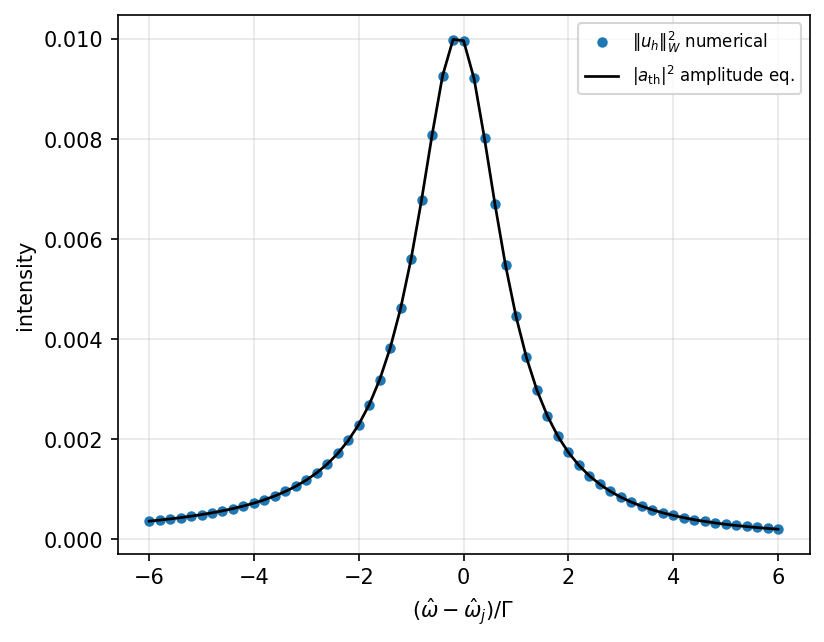}
    \caption{Intensity $\norm{u_h}_W^2$ of the numerical solution of \eqref{eq:discrete-equation-incfield} with fixed geometry and high contrast. The observed peak occurs near $\hat\omega=\hat\omega_j$. The amplitude $a_{\mathrm{th}}$ is computed from the reduced equation \eqref{eq:simplified-amplitude-relation}.}
    \label{fig:inc-field-exp1}
\end{figure}

The second experiment measures the ratio $\norm{W_h}_W/\abs{a_h}$ in the discrete decomposition $u_h=a_h\phi_j+W_h$. Here, the sampled eigenmode is normalized by $\norm{\phi_j}_W=1$, and we set
\begin{equation}
    P_{j,h}v:=\innerproduct{\phi_j}{v}_W\phi_j,
    \qquad Q_{j,h}:=I-P_{j,h},
    \qquad a_h:=\innerproduct{\phi_j}{u_h}_W,
    \qquad W_h:=Q_{j,h}u_h.
\end{equation}
We examine this ratio both as $\tau\to\infty$ and as $\abs{a_h}\to0$. Lemma \ref{lem:uniform-W-bound} suggests the scaling
\begin{equation}\label{eq:expected-numerical-scaling-Wa}
    \frac{\norm{W_h}_W}{\abs{a_h}} \lesssim \frac{\norm{Q_{j,h}u^\mathrm{inc}}_W}{\abs{a_h}} + \epsilon + \eta \abs{a_h}^2.
\end{equation}
For the plane wave above, we prescribe a value of $\mu=(\hat\omega-\hat\omega_j)/\Gamma$ and tune $\hat\omega=\hat\omega_j+\mu\Gamma$ as $\tau$ varies. With the incident amplitude $\alpha>0$ chosen through the reduced input--output relation, the computations use the estimate
\begin{equation}
    \norm{Q_{j,h}u^\mathrm{inc}}_W \leq C(\mu)\big(\epsilon\abs{a_h} + \eta \abs{a_h}^3\big),
\end{equation}
where the constant depends on the prescribed scaled detuning. This motivates the following two empirical scaling laws tested numerically:
\begin{equation}\label{eq:two-case-expected-error}
\begin{aligned}
    \frac{\norm{W_h}_W}{\abs{a_h}} &\leq C(\mu)\big(\epsilon + \eta \abs{a_h}^2\big), \qquad \text{for fixed }\mu\neq0, \\
     \frac{\norm{W_h}_W}{\abs{a_h}} &\leq C_1\epsilon^2 + C_2 \eta \abs{a_h}^2, \qquad \text{for }\mu=0.
    \end{aligned}
\end{equation}
While the first scaling law is consistent with the rigorous estimate in Lemma \ref{lem:uniform-W-bound}, the second reflects an additional numerically observed cancellation. 
Figure \ref{fig:inc-field-exp2} illustrates both cases in \eqref{eq:two-case-expected-error}. The left panel compares the exact resonance, $(\hat\omega-\hat\omega_j)/\Gamma=0$, with the fixed nonzero detuning $(\hat\omega-\hat\omega_j)/\Gamma=3$ as $\tau\to\infty$. The observed slopes are in agreement with $\epsilon^2=\tau^{-1}$ and $\epsilon=\tau^{-1/2}$, respectively. The right panel fixes $\hat\omega=\hat\omega_j$ and examines the ratio as $\abs{a_h}\to0$ at $\tau=200$ and $5000$; the dashed curves are least-squares fits of $C_1\epsilon^2+C_2\eta\abs{a_h}^2$.

\begin{figure}%
    \centering
    \includegraphics[width=0.9\textwidth]{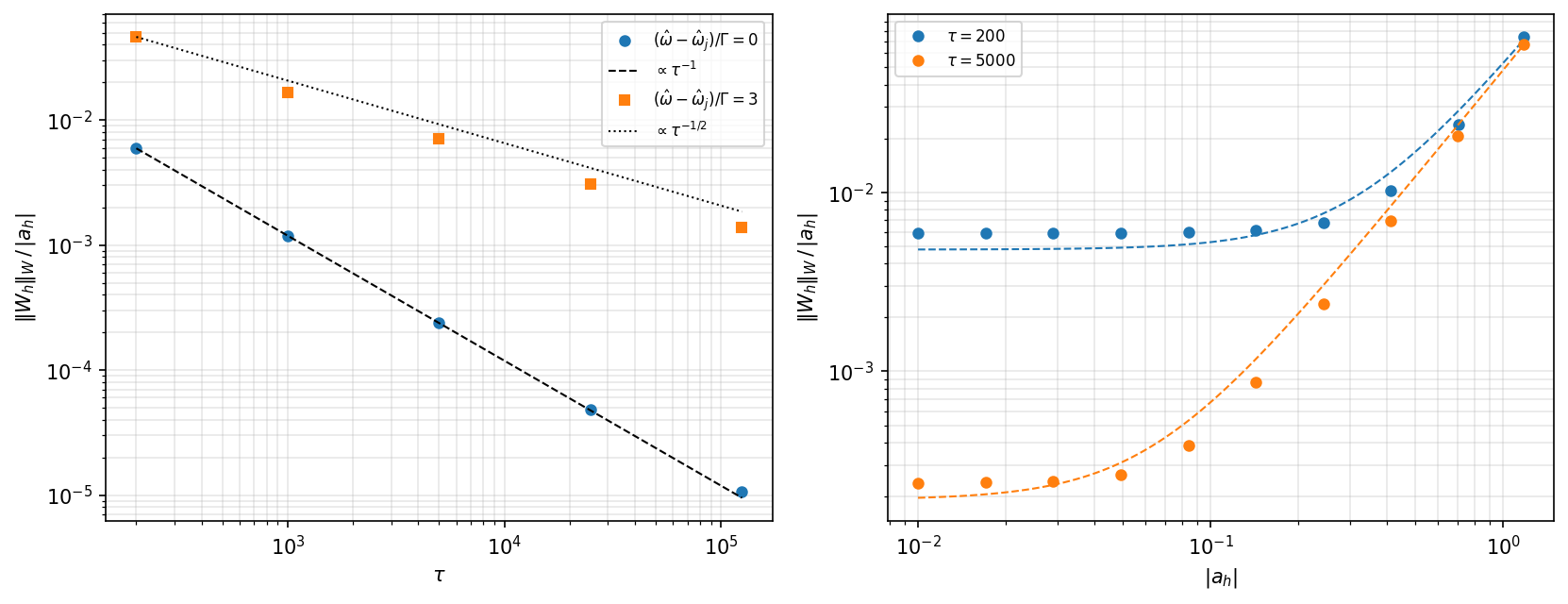}
    \caption{Remainder ratio $\norm{W_h}_W/\abs{a_h}$ for $u_h=a_h\phi_j+W_h$. (Left) Contrast dependence at exact resonance and at a fixed nonzero scaled detuning. (Right) Amplitude dependence at exact resonance for two high contrasts; dashed curves show least-squares fits of $C_1\epsilon^2+C_2\eta\abs{a_h}^2$.}
    \label{fig:inc-field-exp2}
\end{figure}
The third experiment examines the multivalued response predicted by \eqref{eq:simplified-amplitude-relation} at negative scaled detuning. Fix $\mu:=\Delta/\Gamma$. Since $\Delta=O(\epsilon)$ in this regime, \eqref{eq:leading-order-K-simplified-ampl-eq} becomes
\begin{equation}
    K(\hat\omega)=-\frac{2}{\hat\omega_j}(\Delta+\iu\Gamma)+O(\epsilon^2).
\end{equation}
Similarly, \eqref{eq:helper-expansion-KDepsomega} gives
\begin{equation}\label{eq:expansion-M-amplitude-eq}
    M(\hat\omega)
    =\hat\omega_j^2\innerproduct{\phi_j}{\K_D[\phi_j^3]}_D
    +\frac{\iu\epsilon\hat\omega^3}{4\pi}
    \left(\int_D\phi_j\,\dd x\right)
    \left(\int_D\phi_j^3\,\dd x\right)
    +O(\epsilon^2+\abs{\Delta}).
\end{equation}
Because $\K_D$ is real, $\phi_j$ can be chosen to be real-valued. By self-adjointness and $\K_D\phi_j=\lambda_j\phi_j$, the leading term is the positive constant
\begin{equation}
    M_0:=\hat\omega_j^2\innerproduct{\phi_j}{\K_D[\phi_j^3]}_D
    =\hat\omega_j^2\innerproduct{\K_D[\phi_j]}{\phi_j^3}_D
    =\hat\omega_j^2\lambda_j\int_D\phi_j^4\,\dd x
    =\int_D\phi_j^4\,\dd x>0.
\end{equation}
Thus, $M(\hat\omega)=M_0+O(\epsilon)$ for fixed $\mu$. Substitution into \eqref{eq:conditions-distinct-roots} shows that, at leading order, its two conditions reduce to
\begin{equation}
    \mu<0,
    \qquad
    \mu^2>3.
\end{equation}
Hence, at leading order in $\epsilon$, the input--output relation can be multivalued when $\mu<-\sqrt{3}$. In Figure \ref{fig:inc-field-exp4}, we fix $\tau=200$ and $\mu=-4$. The solid curve shows the response predicted by \eqref{eq:simplified-amplitude-relation}, while the circles show solutions of the full discrete equation \eqref{eq:discrete-equation-incfield}.

For these computations, we take $u^{\mathrm{inc}}(x)=\alpha e^{\iu\omega d\cdot x}$ with $\alpha>0$ and $d=\vb e_z=(0,0,1)$. Let
\begin{equation}
    E_\omega:=\big(e^{\iu\omega d\cdot X_k}\big)_{k=1}^n,
    \qquad
    f_h:=\alpha\innerproduct{\phi_j}{E_\omega}_W.
\end{equation}
For the discrete reduced residual, define
\begin{equation}
    \kappa_h(\hat\omega)
    :=1-\hat\omega^2\innerproduct{\phi_j}{K_h^{\epsilon\hat\omega}[\phi_j]}_W,
    \qquad
    m_h(\hat\omega)
    :=\hat\omega^2\innerproduct{\phi_j}{K_h^{\epsilon\hat\omega}[\abs{\phi_j}^2\phi_j]}_W.
\end{equation}
Thus, the plotted coordinates are
\begin{equation}
    (\abs{f_h}^2,\abs{a_h}^2)
    =\left(\alpha^2\abs{\innerproduct{\phi_j}{E_\omega}_W}^2,
    \abs{\innerproduct{\phi_j}{u_h}_W}^2\right).
\end{equation}
We apply PALC to the incident-field system with $x=(\Re u,\Im u)$ and $p=\alpha$; unlike \eqref{eq:palc-map}, this system has neither a normalization nor a phase constraint. The computed points follow the predicted S-shaped curve, and the red dots mark three distinct solutions at the same value of $\abs{f_h}^2$. This comparison is numerical, and the larger-amplitude part of the curve may lie outside the proved local regime.

\begin{figure}%
    \centering
    \includegraphics[width=0.9\textwidth]{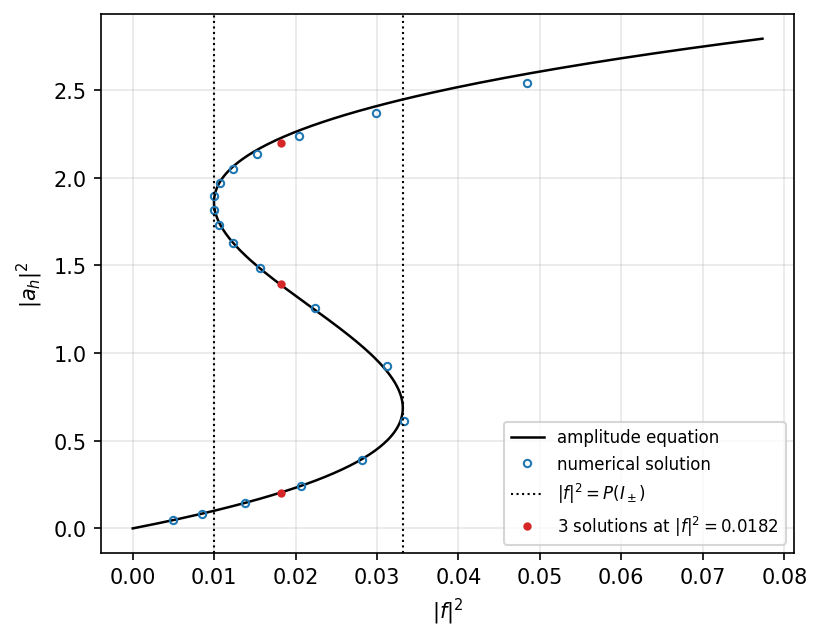}
    \caption{Multivalued input--output response at $(\hat\omega-\hat\omega_j)/\Gamma=-4$. The solid curve is predicted by \eqref{eq:simplified-amplitude-relation}, the circles are solutions of the full discrete problem, and the red dots mark three solutions at the same input intensity.}
    \label{fig:inc-field-exp4}
\end{figure}

Finally, a fourth experiment tests the interior local decomposition, the reduced amplitude equation, and the exterior estimate from Proposition \ref{prop:scattered-final-prop}. For contrasts ranging from $200$ to order $10^5$, we compute a solution $u_h$ of \eqref{eq:discrete-equation-incfield}. We fix the scaled detuning $(\hat\omega-\hat\omega_j)/\Gamma=3$ and choose the incident amplitude through the reduced relation so that $\abs{a_h}\approx0.01$.

At fixed amplitude, the nonlinear contribution eventually dominates when $\epsilon<\eta\abs{a_h}^2$. Thus, maintaining the slopes in Figure \ref{fig:inc-field-exp3} in a strict $\tau\to\infty$ limit would require $\abs{a_h}^2=O(\epsilon)$. We keep $\abs{a_h}\approx0.01$ for consistency with the preceding experiments; the nonlinear contribution remains negligible over the tested range.

In Figure \ref{fig:inc-field-exp3}, the blue curve shows
\begin{equation}
    \frac{\norm{u_h^{\mathrm{sc}}-(a_{\mathrm{th}}\phi_j-u^{\mathrm{inc}})}_W}
    {\norm{u_h^{\mathrm{sc}}}_W},
\end{equation}
where $a_{\mathrm{th}}$ solves \eqref{eq:simplified-amplitude-relation}. The green curve shows the normalized residual
$\abs{(\kappa_h-\eta m_h\abs{a_h}^2)a_h-f_h}/\abs{a_h}$, the discrete counterpart of $\abs{\mathcal{R}_W}/\abs{a}$. These are interior tests of the Lyapunov--Schmidt reduction. The orange curve tests the exterior estimate from Proposition \ref{prop:scattered-final-prop} in the weaker $L^2$ norm. Denote by $\Phi_{j,h}^{\epsilon\hat\omega}$ the quadrature approximation of $\Phi_j^{\epsilon\hat\omega}$ from \eqref{eq:def-exterior-terms}. The orange curve shows the relative discrete $L^2(\Omega)$ error between the numerical exterior scattered field $u_h^{\mathrm{sc}}$ and $a_h\Phi_{j,h}^{\epsilon\hat\omega}$. Over the tested high-contrast range, this error decays at the expected $O(\epsilon)$ rate because the term involving $\eta$ remains negligible. We take
\begin{equation}
    \Omega:=\left\{x\in\R^3:\frac{3}{2}R<\abs{x}<\frac{5}{2}R\right\},
    \qquad R=1,
\end{equation}
so that $\Omega\Subset\R^3\setminus\overline D$.

\begin{figure}%
    \centering
    \includegraphics[width=0.9\textwidth]{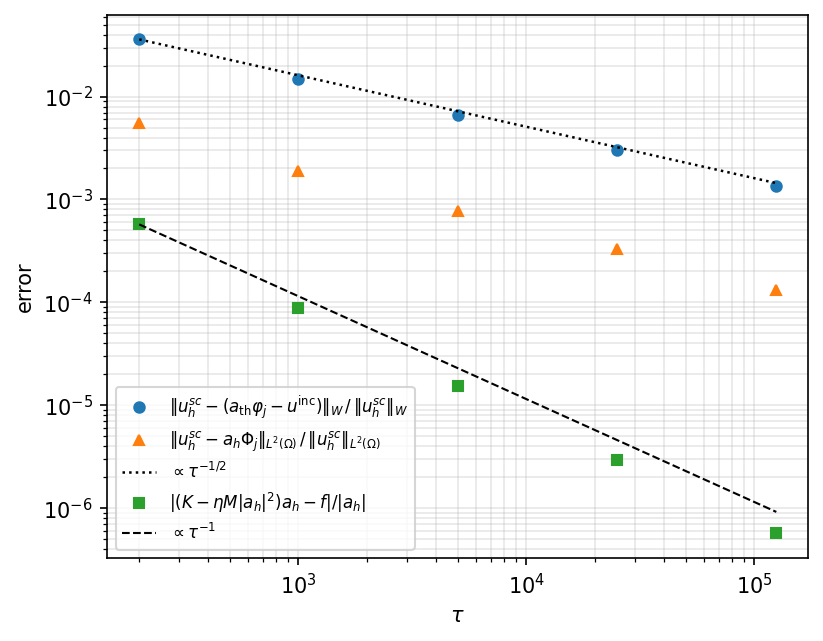}
    \caption{Incident-field tests as $\tau\to\infty$. (Blue) Relative interior scattered-field error for the approximation $a_{\mathrm{th}}\phi_j-u^{\mathrm{inc}}$. (Orange) Relative discrete exterior $L^2(\Omega)$ error for the leading approximation $a_h\Phi_{j,h}^{\epsilon\hat\omega}$. (Green) Normalized residual of the reduced amplitude equation evaluated at $a_h=\innerproduct{\phi_j}{u_h}_W$.}
    \label{fig:inc-field-exp3}
\end{figure}

\subsection{Dimer of Resonators}\label{sec:dimer}

In this section, we validate and extend the bifurcation machinery developed in the previous sections and show that the numerical framework of a dimer of resonators robustly captures symmetry-broken nonlinear resonant modes. 

For doing so, we consider the mirror-symmetric dimer $D=D_1\cup D_2$ introduced in \cite[Section 4]{ammari_dielectric_2025}, with symmetry hyperplane $\{x_1=0\}$. Each spherical resonator has radius $R=1$, and $g=2L-2R$ denotes the horizontal gap between them. We study a range of gap ratios $g/R$, including the near-touching regime.
Throughout, we work in the three-dimensional high-contrast regime under Assumption A of \cite{ammari_dielectric_2025}. Let $(\lambda_+,\phi_+)$ and $(\lambda_-,\phi_-)$ be the two leading eigenpairs of the Newtonian potential $\K_D$, where $\lambda_+>\lambda_->0$. If $\mathcal{R}$ denotes reflection across $\{x_1=0\}$, then $\phi_+$ is symmetric and $\phi_-$ is antisymmetric:
\begin{equation}
    \mathcal{R}[\phi_+] = \phi_+,
    \qquad
    \mathcal{R}[\phi_-] = -\phi_-.
\end{equation}
The eigenfunctions can be chosen to be real because the kernel of $\K_D$ is real. We
use the overlap coefficients
\begin{equation}
    A_{++}:=\norm{\phi_+^2}_{L^2(D)}^2
    =\int_D\phi_+^4\,\dd x,
    \qquad
    A_{+-}:=\norm{\phi_+\phi_-}_{L^2(D)}^2
    =\int_D\phi_+^2\phi_-^2\,\dd x.
\end{equation}
Under Assumptions A and B of \cite{ammari_dielectric_2025}, the local bifurcation results in \cite[Cor. 4.3 and Thm. 4.12]{ammari_dielectric_2025} yield primary symmetric and antisymmetric branches bifurcating from the linear resonances $\omega_{*,\pm}$, as well as a secondary symmetry-breaking branch near the critical normalization
\begin{equation}\label{eq:Ncrit}
    \mathcal{N}_{\mathrm{crit}} \sim \dfrac{\lambda_+ - \lambda_-}{3\lambda_- A_{+-} - \lambda_+ A_{++}}.
\end{equation}
The corresponding leading critical symmetric-mode amplitude satisfies
$p_{+,*}^2\sim\mathcal{N}_{\mathrm{crit}}$.
Our aim is to confirm these bifurcation phenomena numerically and study their dependence on the separation distance. Taking $\eta=1$, we track the asymmetric branches by introducing a small imperfection in the \emph{linear} coefficient of the two resonators:
\begin{equation}\label{eq:linear-imperfection}
    u - \tau\omega^2 \K_D^\omega[c_\delta u + \abs{u}^2u] = 0,
    \qquad
    c_\delta(x):=
    \begin{cases}
        1+\delta, & x\in D_1,\\
        1-\delta, & x\in D_2.
    \end{cases}
\end{equation}
Here, $\delta>0$ is small; exchanging $D_1$ and $D_2$ gives the opposite imperfection. For a fixed distance, we compute the symmetric solution $(u_{\mathrm{sym}}(\mathcal{N}),\omega(\mathcal{N}))$ and use it as the initial guess for the two imperfect continuations. We then solve the full dimer problem and compute the asymmetry function
\begin{equation}\label{eq:asymm-function}
    \begin{aligned}
        A(\mathcal{N}_k) &= \frac{P_1 - P_2}{P_2 + P_1}, \\
        P_j &= \sum_{i \in \mathcal{I}_j}W_i\abs{u_i}^2,
    \end{aligned}
\end{equation}
where $\mathcal{I}_j$ is the set of indices belonging to the resonator $D_j$ and $\mathcal{N}_k$ is the normalization constant at which symmetry breaking is tested. Near a secondary bifurcation, the imperfect continuations develop pronounced asymmetry and, as $\delta\to0$, approach the two asymmetric branches. The two exchanged imperfections produce opposite signs of $A$.
The asymmetry function distinguishes the two asymmetric branches, which have the same real part $\Re(\omega)$ and therefore overlap in a standard bifurcation diagram.
Using Newton's method for the nonlinear Lippmann--Schwinger equation, initialized from normalized linear resonant pairs, we numerically confirm symmetry breaking and track all four branches at several separation distances; see Figures \ref{fig:dimer-bifdiag-g005}, \ref{fig:dimer-bifdiag-g1-g01}, and \ref{fig:dimer-bifdiag-g10-g19}. Additional asymmetry-function plots are presented in Appendix \ref{app:appendixB}.

\begin{figure}%
    \centering
    \begin{subfigure}[b]{\linewidth}
        \centering
        \includegraphics[width=0.9\textwidth]{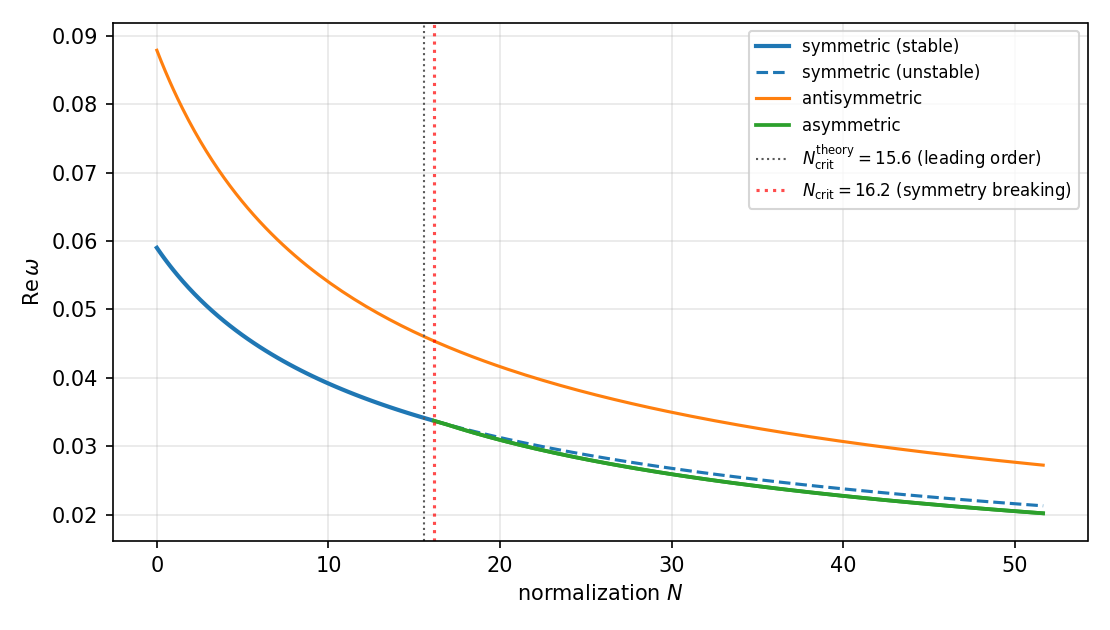}
    \end{subfigure}
    \begin{subfigure}[b]{\linewidth}
        \centering
        \includegraphics[width=0.9\textwidth]{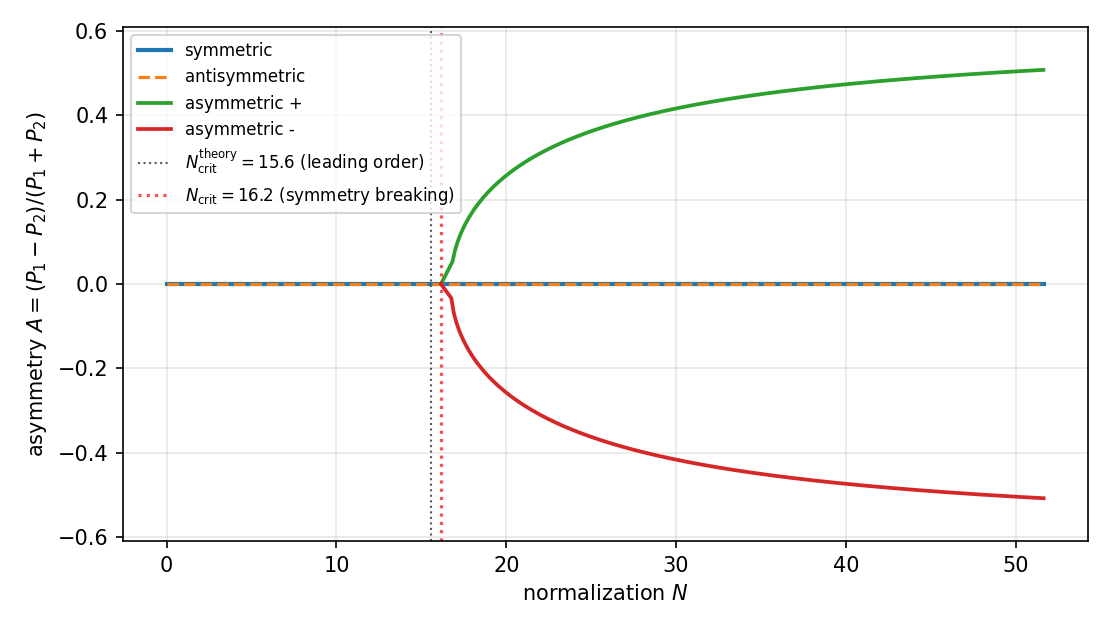}
    \end{subfigure}
    \caption{Dimer of resonators with small gap ratio $g/R = 0.05$, where $R=1$ is the radius and $g=2L-2R$ is the gap. (Top) Bifurcation diagram tracking all four branches as the normalization constant increases. The two asymmetric branches have equal $\Re(\omega)$, so they overlap. Both the leading-order critical normalization and the observed symmetry-breaking threshold are indicated. (Bottom) Asymmetry function showing that both asymmetric branches are identified and tracked.}
    \label{fig:dimer-bifdiag-g005}
\end{figure}

\begin{figure}%
    \centering
    \begin{subfigure}[b]{0.45\textwidth}
        \centering
        \includegraphics[width=\linewidth]{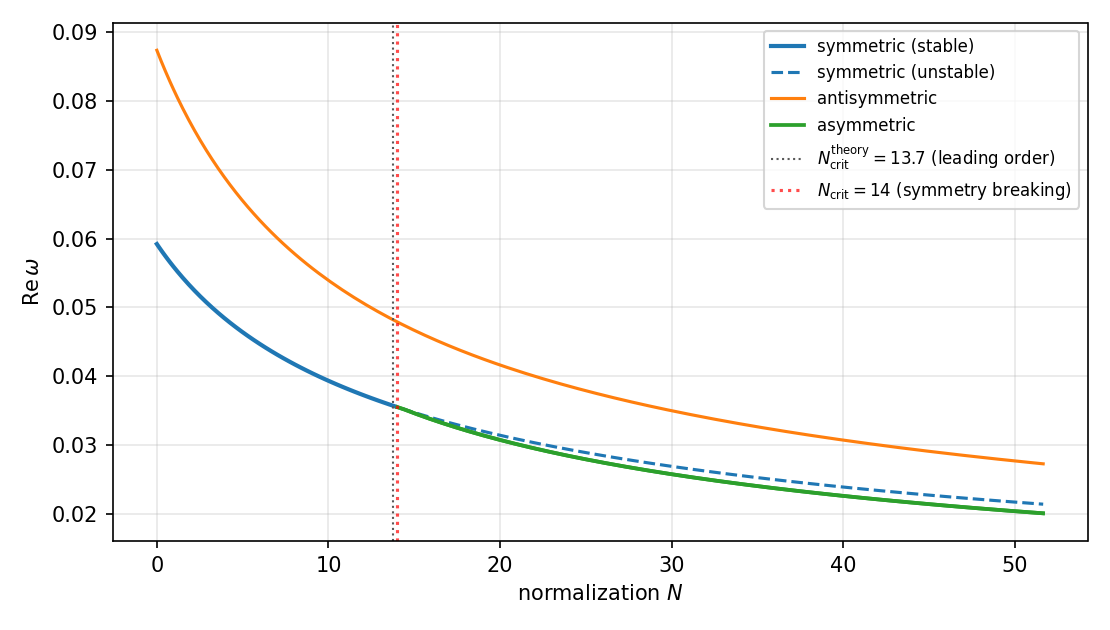}
        \caption{Dimer with small gap ratio $g/R = 0.1$.}
        \label{fig:dimer-bifdiag-g01}
    \end{subfigure}
    \begin{subfigure}[b]{0.45\textwidth}
        \centering
        \includegraphics[width=\linewidth]{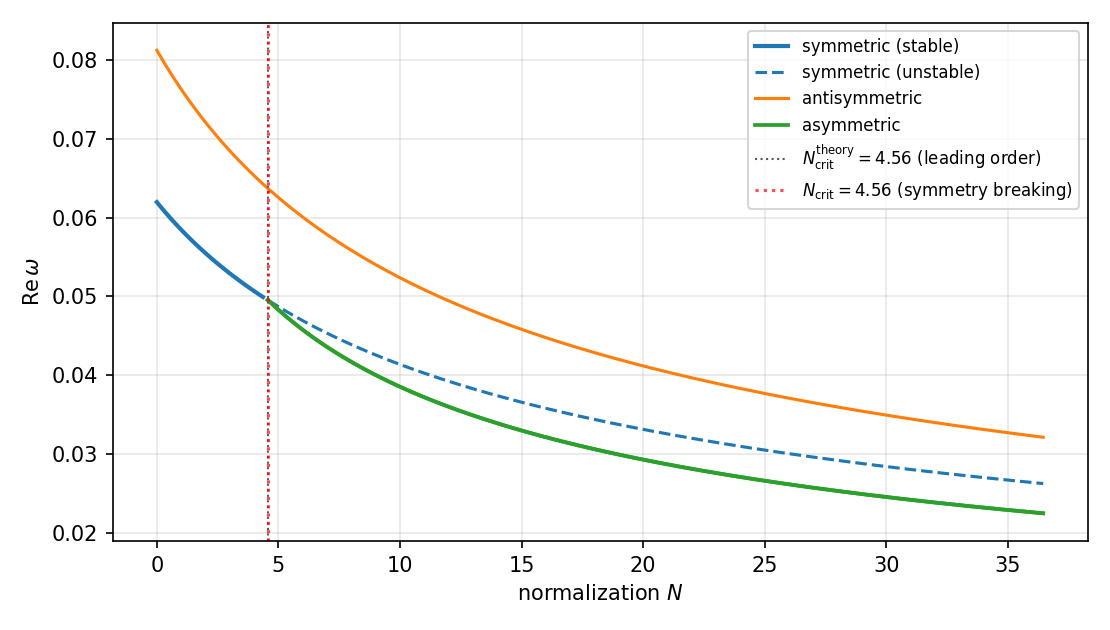}
        \caption{Dimer of resonators with gap ratio $g/R=1$.}
        \label{fig:dimer-bifdiag-g1}
    \end{subfigure}
    \caption{ diagram tracking all four branches as the normalization constant increases. (Left) Small-gap regime. (Right) The gap equals the resonator radius.}
    \label{fig:dimer-bifdiag-g1-g01}
\end{figure}

\begin{figure}%
    \centering
    \begin{subfigure}[b]{0.45\textwidth}
        \centering
        \includegraphics[width=\linewidth]{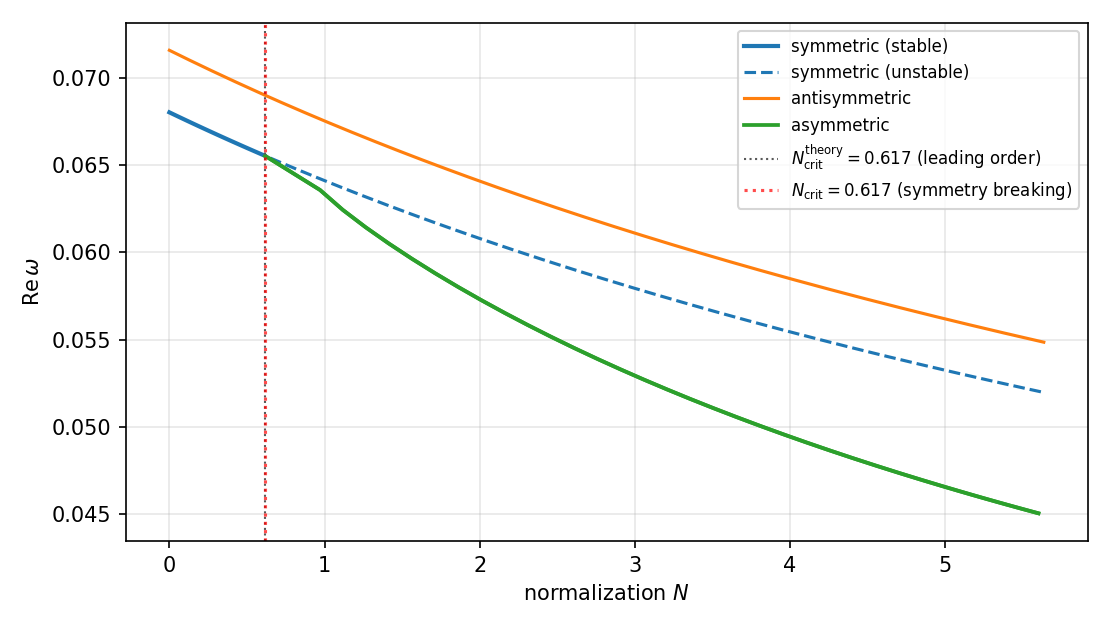}
        \caption{Dimer of resonators with gap ratio $g/R=10$.}
        \label{fig:dimer-bifdiag-g10}
    \end{subfigure}
    \begin{subfigure}[b]{0.45\textwidth}
        \centering
        \includegraphics[width=\linewidth]{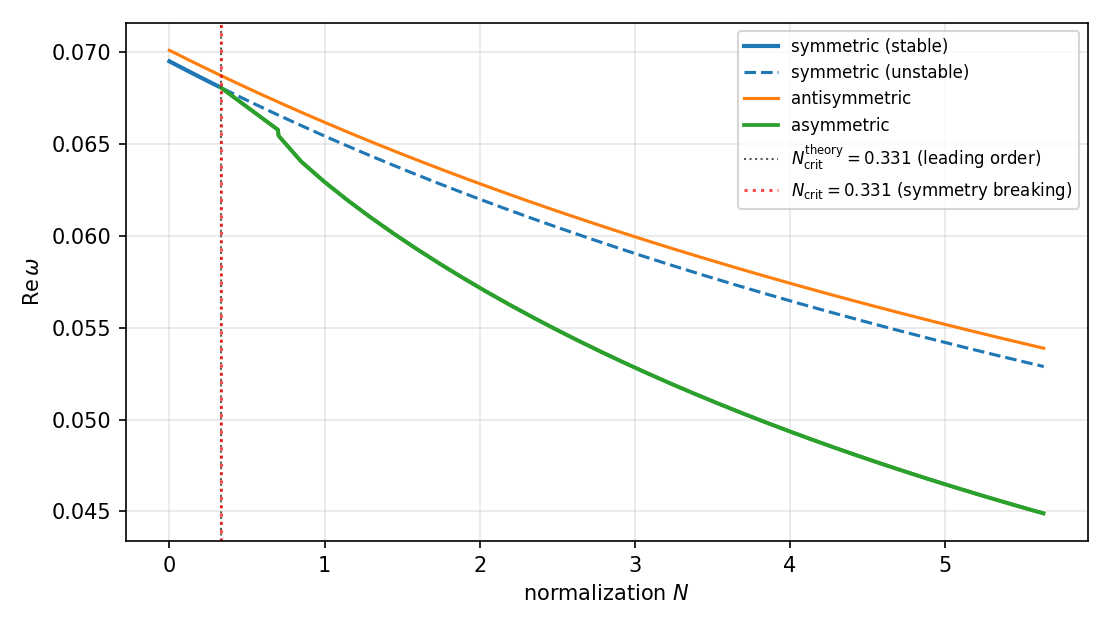}
        \caption{Dimer of resonators with gap ratio $g/R=19$.}
        \label{fig:dimer-bifdiag-g19}
    \end{subfigure}
    \caption{Bifurcation diagrams for a resonator dimer at large separation distances, corresponding to the dilute regime.}
    \label{fig:dimer-bifdiag-g10-g19}
\end{figure}

Figure \ref{fig:dimer-bifdiag-g005} shows that the solver tracks all four branches even at a very small separation distance. Because the asymmetric branches have equal real parts, the accompanying asymmetry plot is needed to distinguish them. Figures \ref{fig:dimer-bifdiag-g005}, \ref{fig:dimer-bifdiag-g1-g01}, and \ref{fig:dimer-bifdiag-g10-g19}, computed at contrast $\tau=500$ with quadrature $(n_r,n_\theta,n_\phi)=(8,8,12)$, summarize the behaviour across the range of gaps.
The symmetric and antisymmetric linear modes satisfy
\begin{equation}\label{eq:approx-linear-modes}
    \omega_{*,\pm}\approx\frac{1}{\sqrt{\tau\lambda_\pm}}.
\end{equation}
Since $\lambda_+>\lambda_-$, this gives $\Re(\omega_{*,+})<\Re(\omega_{*,-})$. Moreover, at the limiting bifurcation point $\epsilon=0$, the expansion used in the proof of \cite[Theorem 4.12]{ammari_dielectric_2025} gives
\begin{equation}
    \hat\omega_*^{-2}
    = \lambda_+\big(1+p_{+,*}^2A_{++}\big)
    +\mathcal{O}(p_{+,*}^4),
\end{equation}
which explains the observed decrease of the resonant frequency as the amplitude increases. The mode splitting also decreases as the separation distance grows, in agreement with \cite[Proposition 4.4]{ammari_dielectric_2025}. In the dilute regime shown in Figure \ref{fig:dimer-bifdiag-g10-g19}, even and odd modes approach, respectively, the symmetric and antisymmetric combinations of translated copies of the principal single-particle state. More precisely,
\begin{equation}
    \lambda_\pm=\lambda_0\pm k_I(L)+\mathcal{O}(L^{-2}),
    \qquad k_I(L)=\mathcal{O}(L^{-1}),
\end{equation}
where $\lambda_0$ is the principal eigenvalue of a single particle. Let
$\phi_0$ be its normalized eigenfunction and define the translation operator
$(\mathcal{T}_L\phi)(x_1,x_2,x_3):=\phi(x_1+L,x_2,x_3)$.
Then the interaction term is
\begin{equation}
    k_I(L):=\innerproduct{\mathcal{R}\mathcal{T}_L\phi_0}
    {\K_D[\mathcal{T}_L\phi_0]}_D>0.
\end{equation}
The associated normalized eigenfunctions satisfy
\begin{equation}
    \left\|\phi_\pm-\frac{\mathcal{T}_L\phi_0
    \pm\mathcal{R}\mathcal{T}_L\phi_0}{\sqrt{2}}\right\|_{L^2(D)}
    \longrightarrow0
    \qquad\text{as }L\to\infty.
\end{equation}

Finally, the computations reveal how the symmetry-breaking threshold changes as the gap narrows. Equation \eqref{eq:Ncrit} gives the leading-order prediction shown by the dotted red lines in the bifurcation diagrams, and the observed numerical onset of symmetry breaking is close to this prediction. For the smaller gaps in Figures \ref{fig:dimer-bifdiag-g005} and \ref{fig:dimer-bifdiag-g01}, the observed threshold increases as the gap decreases, while remaining close to the leading-order value. Together with the gap dependence predicted by \eqref{eq:Ncrit} and shown in Figure \ref{fig:dimer-crit-threshold-vs-gap}, these results suggest that stronger hybridization makes it harder to achieve symmetry breaking.

\begin{figure}%
    \centering
    \includegraphics[width=0.9\linewidth]{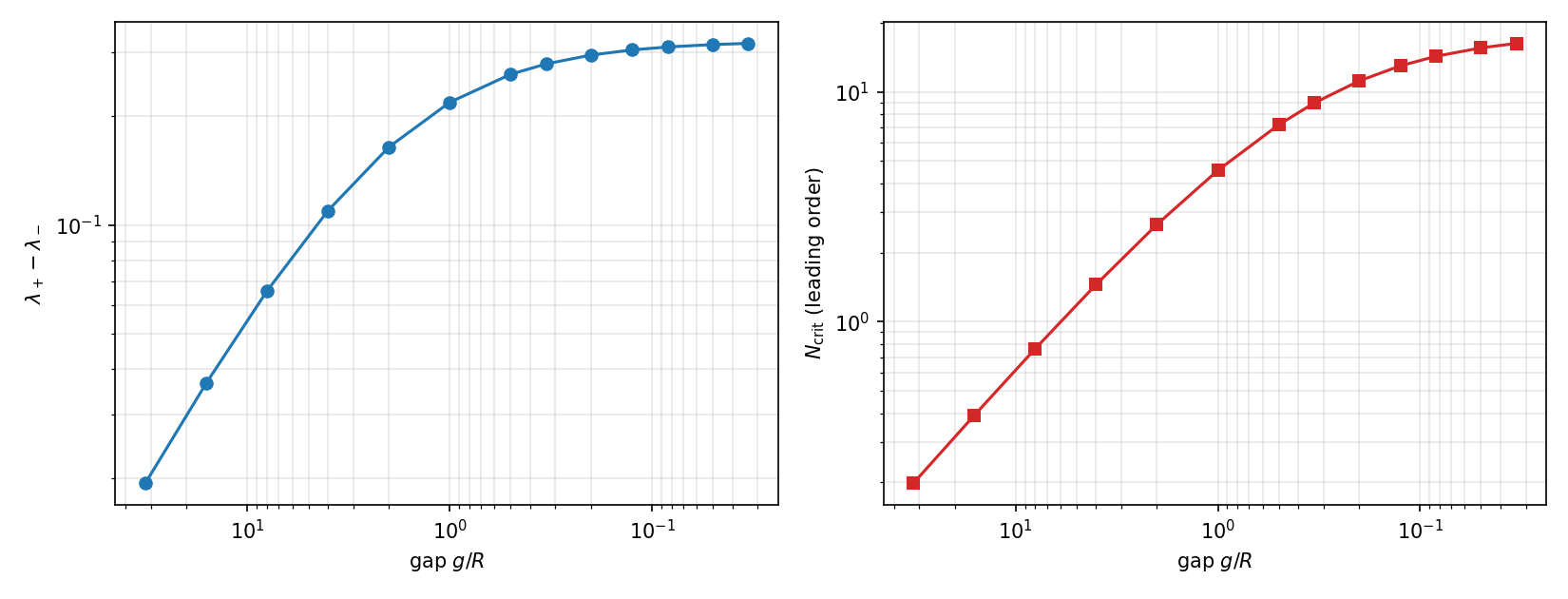}
    \caption{Behaviour of the leading-order critical normalization from \eqref{eq:Ncrit} as the gap becomes small. (Left) Eigenvalue splitting $\lambda_+-\lambda_-$, which is the numerator in \eqref{eq:Ncrit}. (Right) The corresponding value of $\mathcal{N}_{\mathrm{crit}}$.}
    \label{fig:dimer-crit-threshold-vs-gap}
\end{figure}

\section{Concluding Remarks}\label{sec:conclusion}

In this work, we have developed a nonlinear modal reduction for high-contrast subwavelength dielectric scattering and combined it with a numerical continuation framework for nonlinear dielectric resonances. Our analysis shows that, near a simple eigenmode of the Newtonian potential, the nonlinear Lippmann--Schwinger equation can be reduced locally to a single complex modal amplitude. The complementary component is uniquely determined and quantitatively controlled, while the resonant amplitude satisfies an exact scalar equation whose leading part has a cubic form. This provides a nonlinear counterpart of the single-mode decompositions used in linear subwavelength scattering.

The reduction also yields a direct approximation of the scattered field. Away from the resonator, the scattered field is given to leading order by the outgoing field generated by the resonant Newtonian mode, with a remainder controlled by the nonresonant part of the incident field, the high-contrast parameter, and the cubic nonlinear correction. This identifies a precise local regime in which a nonlinear resonator can be described by one effective complex degree of freedom.

The leading amplitude equation naturally exhibits nonlinear multistability. For a simple mode with nonzero monopole moment, expansion near the linear resonance identifies the radiative linewidth and shows that the turning-point condition is governed by the detuning measured on this linewidth scale. The numerical incident-wave experiments reproduce the predicted line shape, confirm the modal remainder estimates over the tested high-contrast range, and display an S-shaped input--output curve with three distinct solutions at the same incident intensity.

The numerical continuation framework also captures symmetry-breaking bifurcations in a resonator dimer. By combining a phase condition, normalization, imperfection continuation, and pseudo-arclength continuation, we track the symmetric, antisymmetric, and two asymmetric branches over gap ratios ranging from the near-touching to the dilute regime. The observed bifurcation threshold remains close to the leading-order theory and increases as the resonators approach one another. This indicates that a stronger hybridization increases the nonlinear intensity required to destabilize the symmetric branch.

Our results in this paper provide a foundation for studying the time-dependent stability of principal nonlinear resonant modes. Determining the dynamical stability of nonlinear resonant modes requires coupling the modal reduction to an appropriate time-dependent model and analyzing the spectrum of the linearization around each branch. In particular, it would be very interesting to relate the turning points and symmetry-breaking points in the stationary diagrams to dynamical instabilities and switching phenomena. Using our proposed analytical and numerical framework, we propose to analyze larger dielectric-resonator systems, such as clusters and periodic resonator systems. In that setting, the scalar amplitude equation should be replaced by a nonlinear system, allowing mode competition, internal resonances, and symmetry-induced degeneracies to be treated within the same framework. These problems will be the subject of future work.

\section*{Code Availability}
\noindent
The code used to obtain the numerical results of this paper is openly available at \url{https://github.com/goriwastaken/nonlinear-dielectric-resonators}. 

\section*{Acknowledgments} 
\noindent 
This work was partially supported by the City University of Hong Kong start-up fund 7200843 and the Swiss National Science Foundation grant number 200021-236472. 

\appendix

\section{Additional Numerical Results}\label{app:appendixB}
We present the asymmetry-function plots corresponding to the bifurcation diagrams in Figures \ref{fig:dimer-bifdiag-g01}, \ref{fig:dimer-bifdiag-g1}, \ref{fig:dimer-bifdiag-g10}, and \ref{fig:dimer-bifdiag-g19}. They show that the solver identifies both asymmetric branches bifurcating from the symmetric branch; the two branches otherwise appear as one line in the bifurcation diagrams because their real parts are equal.

\begin{figure}[!ht]
    \centering
    \includegraphics[width=0.9\textwidth]{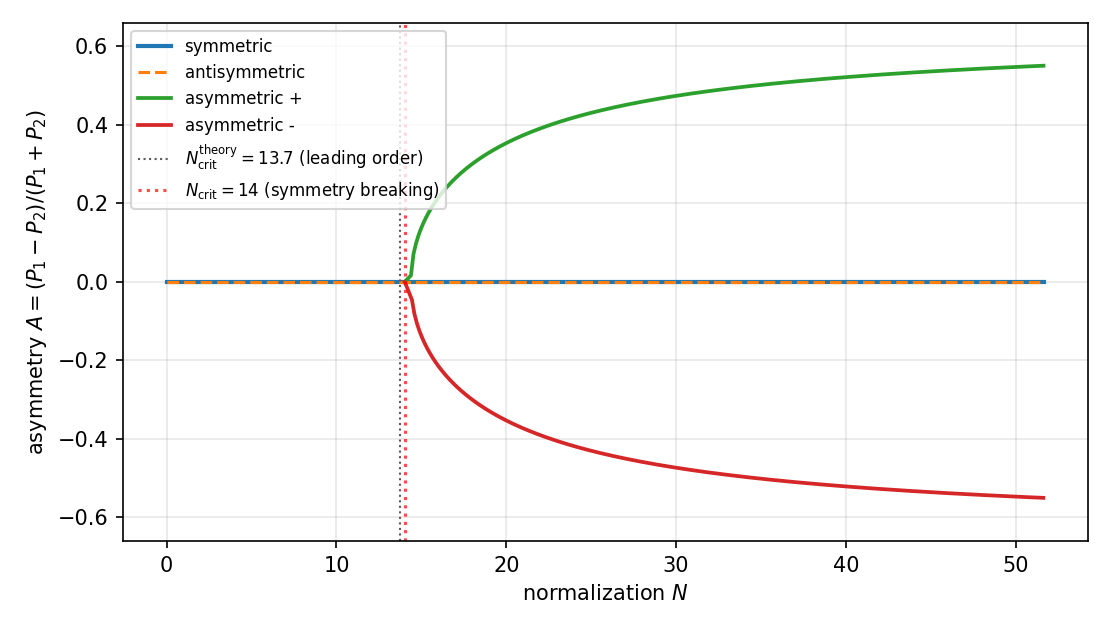}
    \caption{Asymmetry function plot for a dimer with gap ratio $g/R = 0.1$, visualizing the two different asymmetric branches from Figure \ref{fig:dimer-bifdiag-g01}.}
    \label{fig:dimer-bifdiag-asym-g01}
\end{figure}

\begin{figure}%
    \centering
    \includegraphics[width=0.9\textwidth]{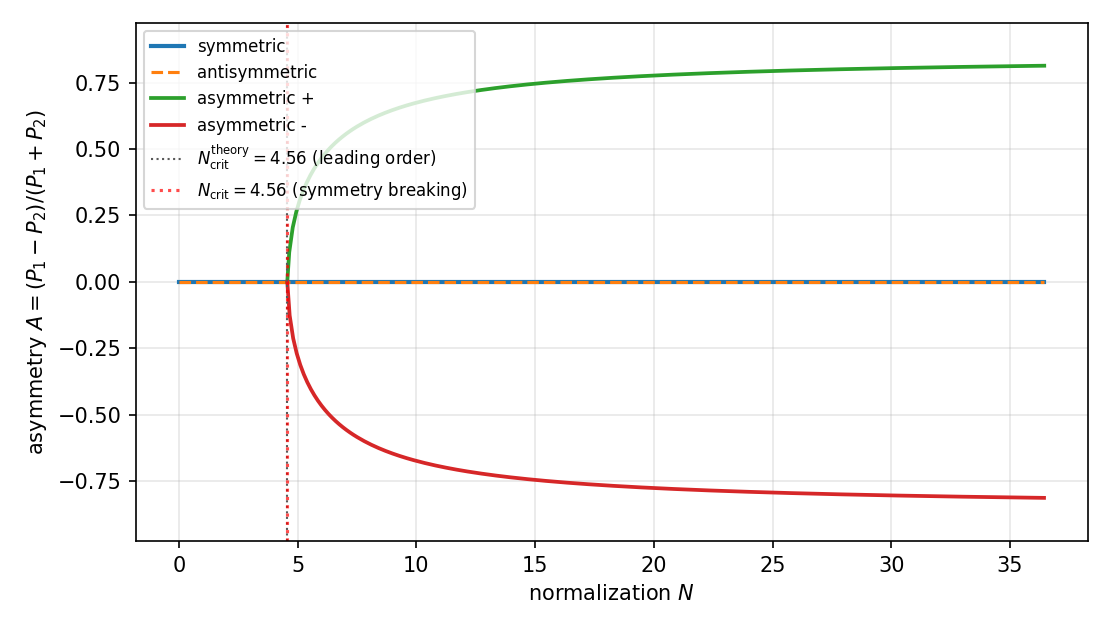}
    \caption{Asymmetry function plot for a dimer with gap ratio $g/R = 1$, visualizing the two different asymmetric branches from Figure \ref{fig:dimer-bifdiag-g1}.}
    \label{fig:dimer-bifdiag-asym-g1}
\end{figure}

\begin{figure}%
    \centering
    \includegraphics[width=0.9\textwidth]{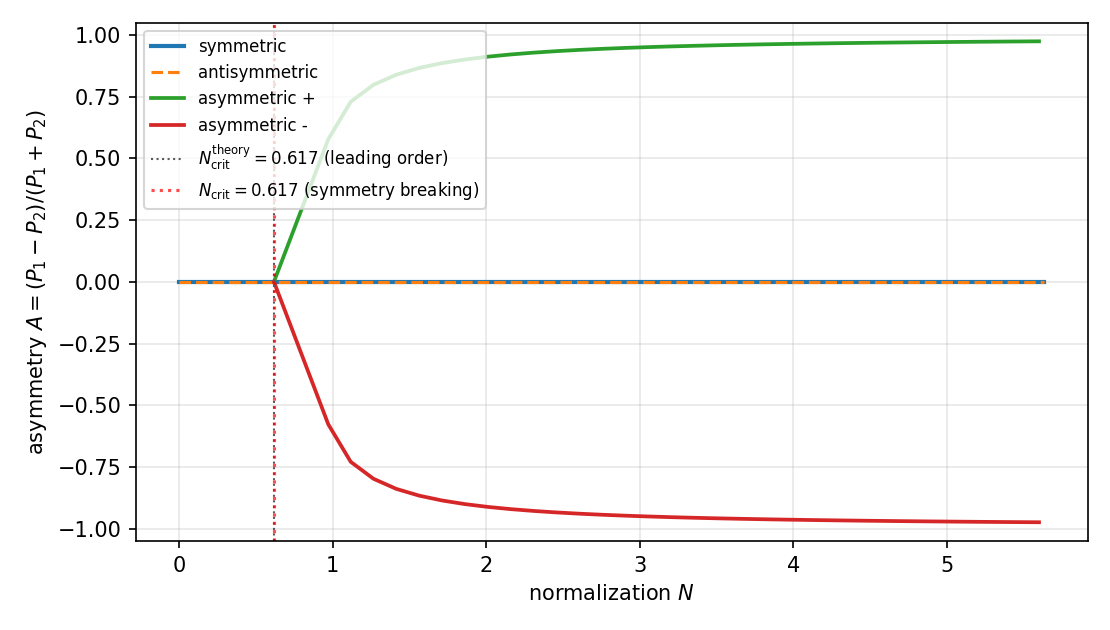}
    \caption{Asymmetry function plot for a dimer with gap ratio $g/R = 10$, distinguishing the two asymmetric branches shown in Figure \ref{fig:dimer-bifdiag-g10}.}
    \label{fig:dimer-bifdiag-asym-g10}
\end{figure}
\begin{figure}%
    \centering
    \includegraphics[width=0.9\textwidth]{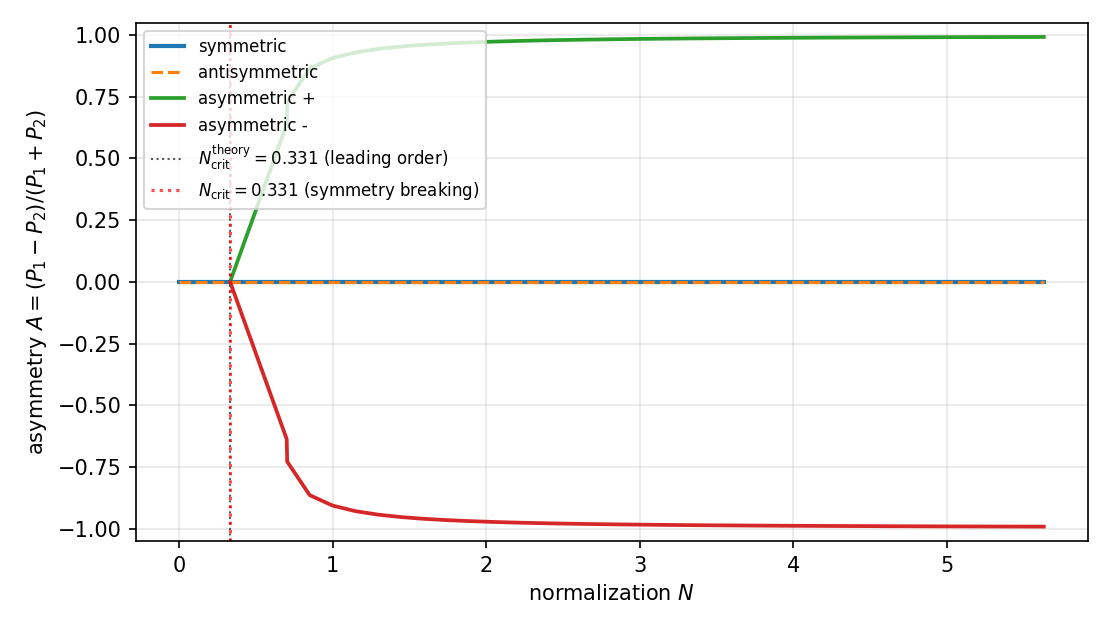}
    \caption{Asymmetry function plot for a dimer with gap ratio $g/R = 19$, distinguishing the two asymmetric branches shown in Figure \ref{fig:dimer-bifdiag-g19}.}
    \label{fig:dimer-bifdiag-asym-g19}
\end{figure}

\clearpage
\bibliographystyle{plain}
\bibliography{refs}

\end{document}